\documentclass[a4paper,12pt]{amsart}

\usepackage{amsmath}
\usepackage{amsthm}
\usepackage{amssymb}
\usepackage{amsfonts}
\usepackage{enumerate}
\usepackage{mathrsfs}
\usepackage[shortlabels]{enumitem}
\usepackage[dvipsnames]{xcolor}
\usepackage[normalem]{ulem}
\usepackage{marginnote}
\usepackage{esint}
\usepackage{hyperref}
\usepackage{pdfpages}
\usepackage{tikz-cd}
\usepackage{float}

\usetikzlibrary{calc}

\usepackage[alphabetic]{amsrefs}

\calclayout

\theoremstyle{plain}
\newtheorem{theorem}{Theorem}[section]

\newtheorem{lemma}[theorem]{Lemma}
\newtheorem{corollary}[theorem]{Corollary}
\newtheorem{proposition}[theorem]{Proposition}
\newtheorem{conjecture}{Conjecture}

\theoremstyle{definition}
\newtheorem*{definition*}{Definition}
\newtheorem{definition}[theorem]{Definition}
\newtheorem{example}[theorem]{Example}

\theoremstyle{remark}
\newtheorem{remark}[theorem]{Remark}

\numberwithin{equation}{section}

\def \one {\mathsf{1}}

\def \D {\mathcal{D}}

\def \M {\mathcal{M}}
\def \N {\mathbb{N}}

\def \Z {\mathbb{Z}}

\def \eps {\varepsilon}

\def \sbs {\subseteq}
\def \sps {\supseteq}

\DeclareMathOperator{\diam}{diam}

\DeclareMathOperator{\LF}{LF}

\DeclareMathOperator{\supp}{supp}

\DeclareMathOperator{\Hyp}{Hyp}
\DeclareMathOperator{\Dyd}{Dyd}

\begin{document}
\title{Uniformly Lipschitz Hyperbolic Group Actions on $\ell^p$}

\author{Chris Gartland}
\address{Department of Mathematics and Statistics, University of North Carolina at Charlotte, 9201 University City Blvd. Charlotte, NC 28223, USA.}

\author{Tianyi Zheng}
\address{Department of Mathematics, University of California San Diego, 9500 Gilman Dr. La Jolla, CA 92093, USA.}

\thanks{The first named author was supported by the National Science Foundation under Grant Number DMS-2555144. The second named author was supported by the National Science Foundation under Grant Number DMS-2348143}
	
\keywords{Haagerup property, a-T-menable groups, Property (T), conformal dimension}

\subjclass[2020]{20F67 (20F65, 20J06, 46B03, 51F30)}

\begin{abstract}
Let $\Gamma$ be a finitely generated hyperbolic group, and let $Q$ denote the conformal dimension of its Gromov boundary $\partial\Gamma$. We prove that for every $p < \frac{Q}{Q-1}$, the group $\Gamma$ admits a proper uniformly Lipschitz affine action on $\ell^p$ with compression exponent $1/p$. The $\ell^p$-space hosting the action is designed from a Markov chain on a specialized hyperbolic filling of $\partial\Gamma$.
\end{abstract}

\maketitle
\tableofcontents

\section{Introduction and Main Result}
\label{sec:intro}

Group actions on Banach spaces play a central role in geometric and analytic group theory. For $p\in [1,2]$, proper isometric affine actions on $L^p$-spaces characterize the Haagerup property (also known as a-T-menability) \cite[Theorem~41]{Now15} -- an analytic property of groups with important connections, for example, to weak amenability and the Baum-Connes conjecture \cite[\S1.3]{CCJJV}. It is well-known that some hyperbolic groups possess the Haagerup property, while others have Kazhdan’s Property (T) (e.g., \cite[\S3.3]{CCJJV}). It is a conjecture of Shalom that every hyperbolic group satisfies a weaker version of the Haagerup property (\cite[Open~Problem~14]{Ober}, \cite[Conjecture~35]{Now15}).

\begin{conjecture}[Shalom] \label{conj:Shalom}
Every finitely generated hyperbolic group admits a proper uniformly Lipschitz\footnote{A group action on a metric space is {\it uniformly Lipschitz} if there exists $L<\infty$ such that the map $x\mapsto \gamma\cdot x$ is $L$-Lipschitz for every group element $\gamma$.} affine action on a Hilbert space.
\end{conjecture}

\noindent We summarize some of the known results related to Conjecture~\ref{conj:Shalom}:
\begin{enumerate}
    \item Yu proved that every hyperbolic group admits a proper {\it isometric} affine action on $\ell^p$ for $p$ {\it sufficiently large} \cite{Yu} (a subsequent presentation was provided in \cite{AL}).
    
    \item Nica \cite[Theorem~22]{Nica} refined Yu's result and proved that every hyperbolic group admits a proper {\it isometric} affine action on an $L^p$-space for $p$ {\it larger} than its hyperbolic dimension. Bourdon \cite{Bourdon} refined this further and showed that $p$ can be taken {\it larger} than the Ahlfors regular conformal dimension (see Remark~\ref{rmk:confdim}) of the Gromov boundary of the group (see the last paragraph of \cite[\S1]{Nica}).
    
    \item Proper {\it uniformly Lipschitz} affine actions on $\ell^1$ have been established for every hyperbolic group (\cite{DM,Gartland}, see also \cite{Vergara} for actions on subspaces of $L^1([0,1])$).
    
    \item It was proved in \cite{Nishikawa} that Sp$(n,1)$ (and its lattices) satisfies Shalom's conjecture. Thus far, these are the only known hyperbolic groups with Property (T) satisfying the conjecture.
\end{enumerate}

We remark that, due to the fact that some hyperbolic groups have Property (T) (see \cite[\S3.6]{Now15}), in items (1) and (2) above, the exponent $p$ cannot be taken to be 2 or smaller for every hyperbolic group, and in item (3), the action cannot be taken to be isometric.

One will notice that, apart from the group Sp$(n,1)$, the works discussed in the items above say nothing about actions of Property (T) hyperbolic groups on $L^p$-spaces for $1 < p \leq 2$. In this article, we establish a result of this type for all hyperbolic groups. See \S\ref{ss:hyperbolic} for the definition of Assouad conformal dimension.

\begin{theorem} \label{thm:main}
Let $\Gamma$ be a finitely generated hyperbolic group. Let $Q$ denote the Assouad conformal dimension of $\partial\Gamma$. Then for all $p < \frac{Q}{Q-1}$, the group $\Gamma$ admits a proper uniformly Lipschitz affine action on $\ell^p$ with compression exponent $1/p$.
\end{theorem}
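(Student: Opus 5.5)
The plan is to realize the action on an $\ell^p$-space of functions on the edges (or vertices) of a suitable hyperbolic filling $X$ of $\partial\Gamma$, and to obtain it from a $1$-cocycle built from a Markov chain. Fix $Q' \in (Q, \frac{p}{p-1})$ whenever $p>1$; for $p=1$ the result is already known, and for $p\le 1$ it is vacuous. Since $Q'$ exceeds the Assouad conformal dimension, there is a metric $d$ in the conformal gauge of $\partial\Gamma$ with Assouad dimension $<Q'$. I would then build a specialized hyperbolic filling $X$, quasi-isometric to $\Gamma$, on which $\Gamma$ acts by uniformly bounded-distortion maps. The vertices of $X$ are balls of radius $\approx \rho^{n}$ in $(\partial\Gamma,d)$, and each vertex at level $n$ has at most $C\rho^{-Q'}$ children. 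On $X$ I would define a downward Markov chain. From a vertex $v$ it moves to a child $w$ with probability $P(v,w)$, comparable to $\mu(w)/\mu(v)$ for a doubling measure $\mu$ adapted to $d$. Its harmonic measure, started at $o$, is $\mu$.

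The cocycle is as follows. For a vertex $x$, let $\nu_x$ be the law of the Markov path started at $x$, viewed as a function on edges $f_x(e)=\mathbb P_x(\text{path uses } e)$. Set $b(\gamma)=f_{\gamma o}-f_o$. The linear part of the action is $\pi(\gamma)f = f\circ\gamma^{-1}$ with a Radon--Nikodym-type reweighting, so that $\pi(\gamma)f_o$ is comparable to $f_{\gamma o}$. The key structural input is that two Markov paths from $x$ and $y$ coalesce: away from the geodesic $[x,y]$ and its ``shadow'' cone, $f_x$ and $f_y$ agree up to multiplicative errors decaying exponentially in the level. This makes $b(\gamma)\in\ell^p$. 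Then $\|b(\gamma)\|_p^p \asymp \sum_{n\le |\gamma|} (\text{number of edges at level } n \text{ in the shadow})\cdot(\text{mass per edge})^p$. At level $n$ along the geodesic there are about $\rho^{-Q' k}$ descendants at depth $k$, each carrying mass about $\rho^{Q'k}$. The tail sum is then $\sum_k \rho^{-Q'k}\rho^{pQ'k}$, which converges exactly when $Q'(p-1)<1$, i.e. $p<\frac{Q'}{Q'-1}$. This is where the exponent restriction enters. Each of the $\asymp|\gamma|$ levels along the geodesic contributes a bounded-below and bounded-above amount, giving $\|b(\gamma)\|_p \asymp |\gamma|^{1/p}$. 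That yields properness with compression exponent $1/p$ and the matching upper bound.

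Uniform Lipschitzness reduces to showing that $\sup_\gamma\|\pi(\gamma)\|<\infty$ on $\ell^p(E(X))$. Since $\Gamma$ acts on $X$ only quasi-isometrically, I would replace $X$ by a $\Gamma$-invariant construction, or define $\pi$ through a bounded-multiplicity matching of edges whose reweighting factors $\mu(\gamma w)/\mu(w)$ are controlled by quasi-Möbius distortion. Finally, embed $\ell^p(E(X))$ into $\ell^p$; since $E(X)$ is countable this is an isometry.

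The main obstacle is reconciling the two requirements. The Markov chain must be compatible with the conformal dimension bound (small branching weight), and the linear part must be a \emph{uniformly bounded} representation even though $\Gamma$ does not preserve the filling. I would first try to control $\pi(\gamma)$ by proving that the chain's transition kernel is quasi-invariant under $\Gamma$ with uniformly bounded Radon--Nikodym cocycle, level by level, up to a bounded shift of levels near the geodesic.
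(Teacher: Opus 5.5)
Your proposal has two genuine gaps, one in each half of the argument.

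\textbf{The cocycle is not proper.} Your chain moves from a vertex to its children, i.e.\ away from $o$ toward $\partial\Gamma$. So $f_{\gamma o}$ is supported in the shadow cone of $\gamma o$, and by your own count
$\|f_x\|_p^p\approx\sum_k\rho^{-Q'k}\rho^{pQ'k}=\sum_k\rho^{Q'(p-1)k}$,
which is finite and independent of $x$ for every $p>1$. Hence $\|b(\gamma)\|_p\le\|f_{\gamma o}\|_p+\|f_o\|_p$ stays bounded. The same computation shows your tail sum converges for all $p>1$, so it cannot be the source of the threshold. Also, $Q'(p-1)<1$ means $p<1+\frac1{Q'}$, not $p<\frac{Q'}{Q'-1}$.

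The coalescence you want is a feature of chains run \emph{toward} the basepoint, which is what the paper uses. There $P$ pushes mass toward $*$, $\Delta=I-P$, and
$b(\gamma)=\Delta^{-1}(\delta(\gamma\cdot*)-\delta(*))=\sum_n P^n(\delta(\gamma\cdot *))\one_{X\setminus\{*\}}$.
By Gradation and the Morse Property this consists of $\gtrsim|\gamma|$ disjointly supported probability measures, each on at most $C$ points. So $\|b(\gamma)\|_p^p\gtrsim|\gamma|$ for every $p$, with no restriction on $p$.

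Moreover, a cocycle requires $\pi(\gamma)(f_y-f_x)=f_{\gamma y}-f_{\gamma x}$ exactly, not up to comparability. With $f_x=\Delta^{-1}(\delta(x)-\delta(*))$ this forces $\pi(\gamma)=\Delta^{-1}\gamma_\#\Delta$. The restriction on $p$ can therefore only enter through the boundedness of that operator.

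\textbf{Uniform boundedness is left open, and your suggested route cannot succeed.} This boundedness is the heart of the theorem. Suppose $S$ is a free $\Gamma$-set and $\pi(\gamma)f=\omega(\gamma,\cdot)\,f\circ\gamma^{-1}$ on $\ell^p(S)$ with $\sup_\gamma\|\pi(\gamma)\|<\infty$. Then $\omega(\gamma,s)=\phi(s)/\phi(\gamma^{-1}s)$ with $\phi$ bounded above and below after normalizing on each orbit. So $\pi$ is conjugate by a bounded diagonal operator to the isometric permutation representation. As the paper remarks, the action cannot in general be made isometric, because some hyperbolic groups have Property (T). Separately, the genuine Radon--Nikodym factors $\mu(\gamma w)/\mu(w)$ are not bounded uniformly in $\gamma$.

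The missing idea is Lemma~\ref{lem:Lipschitz->bndd}: $\|\Delta^{-1}f_\#\Delta\|_{p\to p}$ is bounded uniformly over all $L$-Lipschitz $f$. The proof factors
$\Delta^{-1}f_\#\Delta=(\mathrm{id}\oplus\partial)\,\overline{\Delta^{-1}}\,\overline{f}\,\overline{\Delta}$
through $\ell^p$ of the edges. Here $\overline{\Delta^{-1}}$ sends a vertical edge $x_v\to y_v$ to $-\delta(x_v)$ and a horizontal edge $x_h\to y_h$ to $\sum_n\overline{P}^n(x_h\to y_h)$. The operator $\overline{P}$ is the lift of $P$ given by Wasserstein $\frac12$-Contractivity, a property specific to the product dyadic averaging operator on $\Hyp(Z)$, $Z\sbs[0,1]^m$. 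One has $\|\overline{P}^n\|_{1\to1}\le 2^{-n}$, and dyadic growth rate $\le r$ (with $r>Q$) gives $\|\overline{P}^n\|_{\infty\to\infty}\le C2^{n(r-1)}$. Riesz--Thorin then yields $\|\overline{P}^n\|_{p\to p}\le C2^{-n}2^{rn(1-1/p)}$, which decays geometrically precisely when $p<\frac{r}{r-1}$. That is where the threshold comes from.

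Because the bound holds for arbitrary Lipschitz self-maps, no $\Gamma$-invariant filling is needed. $\Gamma$ is biLipschitz to a Lipschitz retract $R$ of a $K$-leaf extension $\tilde X$ of $\Hyp(Z)$, and acts by $\Delta^{-1}\gamma_\#\Delta$ on $\Delta^{-1}\M_{\tilde X}(R)$. This subspace is complemented in $\ell^p$ by Lemma~\ref{lem:complemented}.
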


\noindent Theorem~\ref{thm:main} is restated as Theorem~\ref{thm:main2} and proved at the end of \S\ref{sec:lpactions}. We remark that, just as before, the action in Theorem~\ref{thm:main} cannot be taken isometric due to the existence of hyperbolic groups with Property (T).

We would also like to point the reader to other recent works \cite{BSob,BG,BSlog,GVZ} on actions of hyperbolic groups on $L^p$-spaces motivated (at least in part) by Conjecture~\ref{conj:Shalom}.

\subsection{Proof strategy and its inspiration}
We present in this subsection a rough overview of the proof strategy for Theorem~\ref{thm:main} and provide an account of its inspiration.

\subsubsection{Strategy}
\label{sss:strategy}
Given a group $G$, vector space $V$, and linear representation $\pi$ of $G$ on $V$, we recall that a {\it cocycle} of $\pi$ is a map $b: G \to V$ satisfying $b(gh) = \pi(g)b(h) + b(g)$ for all $g,h\in G$. In this case, the assignment $\alpha(g)v := \pi(g)v + b(g)$ defines an affine action of $G$ on $V$. In case $V$ is normed, the affine action $\alpha$ is uniformly Lipschitz if and only if $\pi$ is uniformly bounded (meaning $\sup_{g\in G}\|\pi(g)\|<\infty$), and if additionally $G$ is finitely generated, then the affine action $\alpha$ is proper if and only if $b$ is proper (meaning $\|b(g)\| \to \infty$ as the word length of $g$ goes to $\infty$).

Let $\Gamma$ be a finitely generated hyperbolic group. We produce, for each $p < \frac{Q}{Q-1}$ where $Q$ is the Assouad conformal dimension of $\partial\Gamma$, a linear representation $\pi$ of $\Gamma$ on an $\ell^p$-space together with a proper cocycle $b$. The coycle $b$ is canonical and its properness is immediate to verify. The novelty is in the construction of the representation $\pi$, and the most difficult part of the proof is in establishing its uniform boundedness. We describe the construction of $\pi$ and $b$ in the remainder of this subsection.

We begin by defining an auxiliary bounded degree graph $\tilde{X}$ that is quasi-isometric to $\Gamma$. This graph comes from a particular type of hyperbolic filling $X$ of $\partial\Gamma$ that we call a {\it (sub)Euclidean dyadic hyperbolic filling} (see Definitions~\ref{def:Euclidean} and \ref{def:subEuclidean} and Figure~\ref{fig:Hyp(Z)}), together with a {\it $K$-leaf extension} $\tilde{X}$ of $X$ (Definition~\ref{def:metricextension}), where we attach to each point $x\in X$ a set of $K$ extra points at unit distance from $x$. Importantly, we choose the filling $X$ so that its exponential base-2 growth rate is as small as possible, given the conformal dimension of $\partial\Gamma$. The design of the leaf extension ensures that $\Gamma$ is not only quasi-isometric to $\tilde{X}$, but is in fact biLipschitz equivalent to a coarsely dense subset of $\tilde{X}$. The graph is equipped with a distinguished basepoint $*\in\tilde{X}$.

The specific combinatorial structure of $\tilde{X}$ permits a naturally defined Markov operator $P$ and associated Laplace operator $\Delta=I-P$. The Markov operator $P$ is designed to satisfy 5 specific properties (Theorem~\ref{thm:admissible}), and we interact with $P$ only through these 5 properties. Geometrically, the Markov operator $P$ acts by moving points $x \neq *$ towards the basepoint $*$, with each application of $P$ decreasing the distance of $x$ to $*$ by a bounded, positive amount. The basepoint $*$ is a fixed point of $P$. Thus, the forward orbit of $x$ under $P$ is a convex combination of quasi-geodesics from $x$ to $*$.

The Laplace operator $\Delta$ is a linear isomorphism from the vector space $\M_{\tilde{X}}(\tilde{X})$ of finitely supported signed measures $\mu$ on $\tilde{X}$ with $\mu(\tilde{X}) = 0$ to the vector space $\M_{*}(\tilde{X})$ of finitely supported signed measures $\nu$ on $\tilde{X}$ with $\nu(\{*\}) = 0$. This fact is proved by constructing the inverse $\Delta^{-1} := \sum_{n=0}^\infty P^n$, where the sum is pointwise-finite by the 5 properties of $P$. By identifying $\M_{*}(\tilde{X})$ with finitely supported functions on $\tilde{X}\setminus\{*\}$ in the canonical way, we can equip the former space with a norm making it isometric to a dense subset of $\ell^p(\tilde{X}\setminus\{*\})$. We denote this normed space by $\M_{*}^p(\tilde{X})$, and we will define the proper uniformly Lipschitz affine action of $\Gamma$ on this normed space.

To describe the action of $\Gamma$ on $\M_{*}^p(\tilde{X})$, assume in this paragraph that $\Gamma$ is actually biLipschitz equivalent to $\tilde{X}$, and hence that it has a uniformly Lipschitz action on $\tilde{X}$. This induces an action $(\gamma,\mu) \mapsto \gamma\cdot\mu$ of $\Gamma$ on $\M_{\tilde{X}}(\tilde{X})$. Then we define a linear representation $\pi$ of $\Gamma$ on $\M_{*}^p(\tilde{X})$ by $\pi(\gamma)\nu := \Delta^{-1}(\gamma\cdot(\Delta \nu))$. We show, through the key central Lemma~\ref{lem:Lipschitz->bndd}, that when $p < \frac{Q}{Q-1}$ where $Q$ is the Assouad conformal dimension of $\partial\Gamma$, the graph $\tilde{X}$ can be chosen so that the resulting representation $\pi$ is uniformly bounded on $\M_{*}^p(\tilde{X})$. The cocycle $b:\Gamma \to \M_{*}^p(\tilde{X})$ for the representation is the natural one $b(\gamma) := \Delta^{-1}(\gamma\cdot\delta_{*}-\delta_{*})$. The cocycle is proper with compression exponent $1/p$ (proof of Theorem~\ref{thm:Markov->lp}).

Now, in the general case, $\Gamma$ is not biLipschitz equivalent to $\tilde{X}$, but only to a coarsely dense subset $R \sbs \tilde{X}$. In this setting, the action of the previous paragraph naturally lives on $\Delta^{-1}\M_{\tilde{X}}(R)$, where $\M_{\tilde{X}}(R) \sbs \M_{\tilde{X}}(\tilde{X})$ is the subspace of measures supported on $R$. Coarse density implies that $R$ is a Lipschitz retract of $\tilde{X}$, and this in turn implies that $\Delta^{-1}\M_{\tilde{X}}(R)$ is isomorphic to a complemented subspace of $\M_*^p(\tilde{X})$ (Lemma~\ref{lem:Lipschitz->bndd}). Therefore, the action on $\Delta^{-1}\M_{\tilde{X}}(R)$ extends to one on all of $\M_*^p(\tilde{X})$ by complementation.

\subsubsection{Inspiration}
The strategy described above was inspired by \cite[Lemma~7.1]{Gartland}, which asserts that the Lipschitz free space $\LF(\Gamma)$ (see \cite[\S2.3]{Gartland} for the definition) is isomorphic to $\ell^1$. By functoriality, $\LF(\Gamma)$ carries a proper isometric affine action of $\Gamma$, and hence this lemma yields a corresponding proper uniformly Lipschitz affine action of $\Gamma$ on $\ell^1$ (\cite[Theorem~7.2]{Gartland}). In \cite{GVZ}, a much more general result was obtained using the same method: every finitely generated group with Property A (see \cite[\S1 and \S2.4]{GVZ} for the definition), including higher rank lattices such as SL$(n,\Z)$ for $n\geq 3$, admits a proper left invariant metric $d$ such that $\LF(\Gamma,d)$ is isomorphic to $\ell^1$, and hence $\Gamma$ admits a proper uniformly Lipschitz affine action on $\ell^1$ by functoriality.

To prove Theorem~\ref{thm:main}, we sought to adapt the proof of \cite[Theorem~7.2]{Gartland} to produce actions of hyperbolic groups $\Gamma$ on subspaces of $\ell^p$ for $p>1$. Towards that end, we came to an understanding that the isomorphism of $\LF(\Gamma)$ with $\ell^1$ obtained in \cite[Lemma~7.1]{Gartland} could be realized in a different way, using Markov chains as the key conceptual tool instead of the stochastic-embeddings-into-trees framework of \cite{Gartland}. This eventually led to the definition of the action on $\Delta^{-1}\M_{\tilde{X}}(R) \sbs \M_*^p(\tilde{X})$ described in \S\ref{sss:strategy}. In the case $p=1$, we believe that $\Delta^{-1}\M_{\tilde{X}}(R)$ is in fact isomorphic to $\LF(\Gamma)$, and that the action of $\Gamma$ on $\Delta^{-1}\M_{\tilde{X}}(R) \sbs \M_*^1(\tilde{X})$ is conjugate to the action of $\Gamma$ on $\LF(\Gamma)$ by this isomorphism, although we do not prove this. Assuming that this belief is true, the uniform Lipschitzness of the action on $\Delta^{-1}\M_{\tilde{X}}(R) \sbs \M_*^1(\tilde{X})$ is then automatic since it is conjugate by a Lipschitz map to the isometric action on $\LF(\Gamma)$. The difficulty that we must overcome to establish uniform Lipschitzness of the action for $p>1$ is that, for general groups $\Gamma$, there is no intermediate ``functorial normed space" $V(\Gamma)$ that is isomorphic to $\Delta^{-1}\M_{\tilde{X}}(R) \sbs \M_*^p(\tilde{X})$ and for which $\Gamma$ acts isometrically. Indeed, if there were such a space $V(\Gamma)$, it would be superreflexive, and thus every finitely generated group $\Gamma$ would act properly and isometrically on a superreflexive space, but this fails even for some Property A groups like higher rank lattices by \cite{Opp} and \cite{dLdlS}. We overcome this difficulty in the technical Lemma~\ref{lem:Lipschitz->bndd} by proving uniform Lipschitzness of the action on $\Delta^{-1}\M_{\tilde{X}}(R) \sbs \M_*^p(\tilde{X})$ directly, without the aid of a well-understood intermediate space. As it turns out, the conformal dimension of $\partial\Gamma$ constrains the values of $p$ for which we can prove uniform Lipschitzness.

\subsection{Organization}
The reader will notice that the conformal dimension of the boundary plays a significant role in both our Theorem~\ref{thm:main} and of the aforementioned work of Bourdon \cite{Bourdon}. In \S\ref{ss:Bourdon}, we describe that work a bit more precisely and compare it to our methods.

In \S\ref{sec:prelims}, we recall fundamental definitions of metric and hyperbolic geometry.

In \S\ref{sec:Euclidean}, we define the (sub)Euclidean dyadic hyperbolic fillings $\Hyp(Z)$ of compact subsets $Z \sbs [0,1]^m$, and establish some of the basic properties. The most important fact is that $\Hyp(\partial\Gamma)$ is quasi-isometric to $\Gamma$ when $\Gamma$ is a finitely generated hyperbolic group. This is stated as Theorem~\ref{thm:qifilling}, but the proof of this theorem is deferred to the Appendix (Theorem~\ref{thm:qifillingA}).

In \S\ref{sec:Markov}, we study general Markov operators satisfying a certain set of 5 properties. We call a Markov operator satisfying the first 3 of these properties {\it admissible} (Definition~\ref{def:admissible}). The other 2 properties are {\it Wasserstein $\frac{1}{2}$-Contractivity} and {\it Dyadic Growth Rate $\leq r$}. We show in Proposition~\ref{prop:admissible} that the filling $\Hyp(Z)$ naturally carries a Markov operator satisfying these 5 properties, with dyadic growth rate $r$ bounded by the Assouad conformal dimension of $Z$. A Markov operator with the same 5 properties is then defined on the $K$-leaf extension of $\Hyp(Z)$ (Proposition~\ref{prop:graphextension}).

In \S\ref{sec:lpactions}, we use the machinery built in \S\ref{sec:Euclidean} and \S\ref{sec:Markov} to construct proper uniformly Lipschitz affine actions of finitely generated hyperbolic groups on $\ell^p$ for $p < \frac{Q}{Q-1}$, where $Q$ is the Assouad conformal dimension of $\partial\Gamma$ (Theorem~\ref{thm:main2}). See \S\ref{sss:strategy} for an overview of the construction of this action.

\subsection{Comparison with the work of Bourdon}
\label{ss:Bourdon}
In \cite{Bourdon}, Bourdon proved that when $\Gamma$ is a finitely generated hyperbolic group, a direct sum of finitely many copies of the regular representation of $\Gamma$ on $\ell^p(\Gamma)$ admits a proper cocycle whenever $p > Q$ and $Q$ is the Ahlfors regular conformal dimension of $\partial\Gamma$. The cocycle is constructed from elements of $\ell^p$-cohomology of $\Gamma$, which in turn is identified with a Besov space on $\partial\Gamma$ that is nontrivial when $p>Q$. This identification is due to earlier work of Bourdon-Pajot \cite{BP}, through the use of hyperbolic fillings.

We also use hyperbolic fillings in an essential way in the present work. The main difference is that we use fillings to construct the linear representation and our cocycle is canonical, whereas in \cite{Bourdon}, the linear representation is canonical and fillings are used to construct the cocycle. The other obvious differences are that the representations of \cite{Bourdon} are isometric and on $\ell^p$ for $p > Q$, while ours are uniformly bounded and on $\ell^p$ for $p< \frac{Q}{Q-1}$. As already discussed, the fact that our representations are only uniformly bounded instead of isometric is necessary due to the existence of hyperbolic groups with Property (T).

\section{Preliminaries} \label{sec:prelims}
\subsection{Geometric Mappings}
Let $(X,d_X)$ and $(Y,d_Y)$ be metric spaces. A map $f: X \to Y$ is {\it coarse Lipschitz} if there exist $L < \infty$ and $C < \infty$ such that
$$d_Y(f(x), f(y)) \le L d_X(x,y) + C$$
for all $x, y \in X$. If $C$ can be taken to be 0, then the map is {\it Lipschitz}. An injective Lipschitz map with Lipschitz inverse is a {\it biLipschitz embedding}, and a surjective biLipschitz embedding is a {\it biLipschitz equivalence}. 

A subset $A \sbs X$ is {\it coarsely dense} in $X$ if there exists $C < \infty$ such that for all $x \in X$,
$$d_X(x, A) = \inf_{a \in A} d_X(x, a) \le C.$$
Two maps $f: X \to Y$ and $g: Y \to X$ are {\it coarsely inverse} if there exists a constant $C < \infty$ such that
$$d_X(g(f(x)), x) \le C \quad \text{and} \quad d_Y(f(g(y)), y) \le C$$
for all $x \in X$ and all $y \in Y$. A map $f: X \to Y$ is a {\it quasi-isometry} if it is coarse Lipschitz and admits a coarse inverse $g: Y \to X$ that is also coarse Lipschitz.

The following fact is a simple consequence of Zorn's Lemma and we leave the proof to the reader.

\begin{lemma} \label{lem:qi->biLip1}
Let $f: X \to Y$ be a quasi-isometry between metric spaces. Then there exist coarsely dense subsets $N_X \sbs X$, $N_Y \sbs Y$ such that the restriction of $f$ to $N_X$ is biLipschitz with $f(N_X) = N_Y$.
\end{lemma}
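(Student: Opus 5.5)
Choose the constants first and then the net. Since $f$ is a quasi-isometry, there are $L\ge 1$ and $C\ge 0$ such that
$$\frac{1}{L}d_X(x,y) - C \le d_Y(f(x),f(y)) \le L d_X(x,y) + C$$
for all $x,y\in X$. The lower bound comes from composing with the coarse Lipschitz coarse inverse $g$: we have $d_X(x,y) \le d_X(g f x, g f y) + 2C' \le L' d_Y(fx,fy) + C'' $. We also know that $f(X)$ is coarsely dense in $Y$, because $d_Y(f(g(y)),y)\le C$. Fix a scale $D > 2LC$, large enough that both additive errors are dominated.

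Apply Zorn's Lemma to the family of $D$-separated subsets of $X$, meaning sets whose distinct points are at distance at least $D$, ordered by inclusion. Chains have unions that are still $D$-separated, so a maximal element $N_X$ exists. Maximality gives $d_X(x,N_X) < D$ for every $x\in X$, so $N_X$ is coarsely dense.

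Now take distinct $x,y\in N_X$, so that $d_X(x,y)\ge D$. Then $C \le d_X(x,y)/(2L)$, and therefore
$$d_Y(fx,fy)\ge \tfrac{1}{L}d_X(x,y) - C \ge \tfrac{1}{2L}d_X(x,y),$$
$$d_Y(fx,fy)\le L d_X(x,y) + C \le (L+1)\,d_X(x,y).$$
In particular $f$ is injective on $N_X$, and $f|_{N_X}$ is biLipschitz onto its image.

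Set $N_Y := f(N_X)$. To see that $N_Y$ is coarsely dense, take $y\in Y$. There is $x\in X$ with $d_Y(f(x),y)\le C$. There is $x'\in N_X$ with $d_X(x,x')<D$. Hence
$$d_Y(y,f(x')) \le C + LD + C.$$
This bound is uniform in $y$, which completes the argument.

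The only mildly delicate point is the lower bound $\frac1L d_X - C$, i.e. the fact that a quasi-isometry in the paper's sense, defined via a coarse inverse, is a quasi-isometric embedding. This follows from the one-line composition with $g$ above.
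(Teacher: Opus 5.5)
Your proof is correct and is the argument the paper has in mind. The paper leaves this lemma to the reader as a consequence of Zorn's Lemma, and your Zorn's-Lemma construction of a maximal $D$-separated net with $D > 2LC$ is exactly that argument: it absorbs the additive constants into multiplicative ones, and you correctly derive the quasi-isometric lower bound from the coarse inverse.
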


We say that $X$ has {\it sharp Assouad dimension $\leq s$} (for some $s\in[0,\infty)$) if there exists a constant $C < \infty$ such that for every radius $0<r<R<\infty$ and center $x\in X$, the ball $B_R(x) \sbs X$ can be covered by at most $C(R/r)^s$ balls of radius $r$. For $\alpha \in (0,1]$, the {\it $\alpha$-snowflake} of $(X,d_X)$ is the metric space $(X, d_X^\alpha)$. For any $\alpha\in(0,1]$, it is clear that $X$ has sharp Assouad dimension $\leq s$ if and only if its $\alpha$-snowflake has sharp Assouad dimension $\leq s/\alpha$. We say that $X$ is {\it doubling} if it has sharp Assouad dimension $\leq s$ for some $s<\infty$. The {\it Assouad embedding theorem} states that if $\alpha<1$, then the $\alpha$-snowflake of any doubling space admits a biLipschitz embedding into a finite-dimensional normed space (\cite{Assouad}, see \cite[Theorem~12.2]{Heinonen}).

\begin{definition}[Metric $K$-Leaf Extension]
\label{def:metricextension}
Let $(X,d_X)$ be a metric space and $K\in\N$. The {\it metric $K$-leaf extension} of $X$ is a new metric space $(\tilde{X},d_{\tilde{X}})$ formed by attaching $K$ leaves to each point in $X$ at unit distance. Specifically, the underlying set is $\tilde{X} := X \sqcup (\{1,2,\dots K\} \times X)$, and the metric is defined by
\begin{equation*}
    d_{\tilde{X}}(\tilde{x},\tilde{y}) = \begin{cases}
        d(x,y) & \tilde{x} = x,\tilde{y} = y \in X \\
        d(x,y)+1 & \tilde{x} = (i,x) \in \tilde{X}\setminus X \, , \, \tilde{y} = y \in X \\
        d(x,y)+2 & \tilde{x} = (i,x), \tilde{y} = (j,y) \in \tilde{X}\setminus X
    \end{cases}.
\end{equation*}
The set of points $\{1,2,\dots K\} \times \{x\} \sbs \tilde{X}$ are called the {\it leaves} of $x \in X$.
\end{definition}

Recall that a metric space $(X,d_X)$ is {\it uniformly discrete} if $\inf_{x\neq y\in X} d_X(x,y) > 0$ and {\it uniformly locally finite} if $\sup_{x\in X}|B_R(x)|<\infty$ for every $R<\infty$. The important examples of uniformly discrete, uniformly locally finite metric spaces we concern ourselves with in this article are the vertex sets of connected, bounded degree graphs, equipped with the (unweighted) shortest path metric. In this setting, we can use Lemma~\ref{lem:qi->biLip1} to upgrade a quasi-isometric map to a biLipschitz one.

\begin{lemma} \label{lem:qi->biLip2}
Let $X$ be a uniformly discrete, uniformly locally finite metric space and $Y$ a metric space. If $X$ is quasi-isometric to $Y$, then there exists a constant $K<\infty$ such that $X$ is biLipschitz equivalent to a coarsely dense subset of the metric $K$-leaf extension $\tilde{Y}$ of $Y$ of Definition~\ref{def:metricextension}.
\end{lemma}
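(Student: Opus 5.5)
Apply Lemma~\ref{lem:qi->biLip1} to a quasi-isometry $f: X \to Y$. This gives coarsely dense subsets $N_X \sbs X$ and $N_Y \sbs Y$ such that $f|_{N_X}: N_X \to N_Y$ is a biLipschitz bijection. The remaining task is to extend this to a biLipschitz map from all of $X$ into $\tilde{Y}$ with coarsely dense image. The idea is to send each point of $X \setminus N_X$ to a leaf attached to the image of a nearby point of $N_X$.

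Fix $C$ with $d_X(x,N_X) \le C$ for all $x\in X$, and choose a nearest-point map $\rho: X \to N_X$ with $\rho|_{N_X} = \mathrm{id}$ and $d_X(x,\rho(x)) \le C$. By uniform local finiteness,
\[
K := \sup_{a\in N_X} |B_C(a)| < \infty,
\]
so each fiber $\rho^{-1}(a)$ has at most $K$ points. For each $a \in N_X$, enumerate the points of $\rho^{-1}(a)\setminus\{a\}$ injectively by indices in $\{1,\dots,K\}$. Define $F: X \to \tilde{Y}$ by $F(a) := f(a)\in Y$ for $a\in N_X$, and $F(x) := (i, f(\rho(x)))$ when $x$ is the $i$-th point of $\rho^{-1}(\rho(x))\setminus\{\rho(x)\}$.

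$F$ is injective: distinct points in the same fiber go to distinct leaves or to the base point, and distinct fibers go to distinct base points because $f|_{N_X}$ is injective. Its image contains $N_Y$, which is coarsely dense in $Y$, and every point of $\tilde{Y}$ lies within distance $1$ of $Y$. Hence the image is coarsely dense in $\tilde{Y}$.

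Let $\pi: \tilde{Y} \to Y$ be the base-point projection. For $x\neq y$, the definition of $d_{\tilde Y}$ gives
\[
d_{\tilde Y}(F(x),F(y)) = d_Y\big(f\rho(x), f\rho(y)\big) + \epsilon, \qquad \epsilon\in\{0,1,2\},
\]
with $\epsilon \ge 1$ whenever $\rho(x)=\rho(y)$, since then at least one of $x,y$ is a leaf. Let $L$ be the biLipschitz constant of $f|_{N_X}$. The upper bound is
\[
d_{\tilde Y}(F(x),F(y)) \le L\big(d_X(x,y)+2C\big) + 2,
\]
and the additive constant is absorbed using $d_X(x,y)\ge \delta := \inf_{x\neq y} d_X(x,y) > 0$.

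The lower bound is the only place needing care, and it splits into two cases. If $\rho(x)=\rho(y)$, then $d_X(x,y)\le 2C$ while $d_{\tilde Y}(F(x),F(y))\ge 1$, so the ratio is bounded. If $\rho(x)\neq\rho(y)$, then
\[
d_{\tilde Y}(F(x),F(y)) \ge L^{-1} d_X(\rho(x),\rho(y)) \ge L^{-1}\max\{\delta,\, d_X(x,y)-2C\}.
\]
This is $\gtrsim d_X(x,y)$ both when $d_X(x,y)\ge 4C$ and when $d_X(x,y)< 4C$, the latter using $\delta>0$. Hence $F$ is a biLipschitz equivalence onto its coarsely dense image in the metric $K$-leaf extension $\tilde{Y}$.
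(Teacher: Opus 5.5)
Your proof is correct and follows the paper's route: apply Lemma~\ref{lem:qi->biLip1}, take a nearest-point projection onto $N_X$ whose fibers have at most $K$ points by uniform local finiteness, and send each point to a leaf over $f$ of its projection. The only difference is cosmetic. The paper sends every point of $X$, including those of $N_X$, to a leaf, so distinct image points are always at distance $d_Y(f\pi(x),f\pi(y))+2$. You keep $N_X$ on the base instead, and you also supply the biLipschitz verification that the paper leaves as straightforward.
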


\begin{proof}
Assume that $X$ is quasi-isometric to $Y$. Then by Lemma~\ref{lem:qi->biLip1}, there exist coarsely dense subsets $N_X \sbs X$, $N_Y \sbs Y$ and a biLipschitz equivalence $f: N_X \to N_Y$. Let $\pi: X \to N_X$ be a nearest neighbor projection. Then $\pi$ is Lipschitz since $X$ is uniformly discrete and $N_X$ is coarsely dense. Furthermore, since $X$ is uniformly locally finite, there is a constant $K\in\N$ such that the fiber $\pi^{-1}(\{x\})$ has cardinality at most $K$ for all $x\in N_X$. Choose for each $x \in N_X$ an injection $\iota_x: \pi^{-1}(\{x\}) \to \{1,2,\dots K\}$. Then we map $X$ into $\tilde{Y}$ by sending each $x$ to a leaf of $f(\pi(x))$. Specifically, the map is defined by $x \mapsto (\iota_{\pi(x)}(x),f(\pi(x)))$. It is straightforward to check that this is a biLipschitz embedding such that the 1-neighborhood of its image contains $N_Y$, and hence is coarsely dense in $\tilde{Y}$.
\end{proof}

\subsection{Hyperbolic metric spaces}
\label{ss:hyperbolic}
We mainly follow the presentation of hyperbolic metric spaces and their boundaries as given in Bonk-Schramm \cite{BS} while also citing the more comprehensive accounts \cite{Buyalo} and \cite[Chapter~III.H]{BH} (especially in Appendix~\ref{app}).

Let $(X,d_X)$ be a metric space and $x,y,o \in X$. The \emph{Gromov product} of $x,y$ with respect to $o$ is defined by
\begin{equation*} \label{eq:Gromovproductdef}
    (x|y)_o := \frac{1}{2}(d_X(x,o)+d_X(y,o)-d_X(x,y)).
\end{equation*}
Let $\delta \in [0,\infty)$. Then $X$ is said to be \emph{$\delta$-hyperbolic} if for all $x,y,z,o \in X$,
\begin{equation*}
    (x|z)_o \geq \min\{(x|y)_o,(y|z)_o\} - \delta.
\end{equation*}
The space $X$ is \emph{Gromov hyperbolic}, or just \emph{hyperbolic}, if it is $\delta$-hyperbolic for some $\delta \in [0,\infty)$.

We say that a finitely generated group is {\it hyperbolic} is its Cayley graph with respect to some finite generating set is a hyperbolic metric space. As is well-known, hyperbolicity is a quasi-isometric invariant among geodesic metric spaces \cite[Corollary~1.3.5]{Buyalo}, and thus the property of a group being hyperbolic is independent of the choice of generating set (since any two finite generating sets yield quasi-isometric Cayley graphs).

Suppose $(X,d_X)$ is a hyperbolic space. A sequence $\{x_i\}_{i=1}^\infty \sbs X$ is said to \emph{converge at $\infty$} if for some $o \in X$, it holds that $\lim_{i,j\to\infty} (x_i|x_j)_o = \infty$. Two sequences $\{x_i\}_{i=1}^\infty, \{y_i\}_{i=1}^\infty \sbs X$ converging at $\infty$ are \emph{equivalent} if for some $o \in X$, it holds that $\lim_{i\to\infty} (x_i|y_i)_o = \infty$.

This is an equivalence relation on the set of sequences converging at $\infty$ by hyperbolicity, and the set of equivalence classes is called the \emph{Gromov boundary}, or just \emph{boundary}, of $X$, denoted by $\partial X$. The \emph{Gromov product} of $\xi,\upsilon \in \partial X$ with respect to $o \in X$ is defined by
\begin{equation*}
    (\xi|\upsilon)_o := \sup \{\liminf_{i\to\infty} (x_i|y_i)_o: \{x_i\}_{i=1}^\infty \in \xi,\{y_i\}_{i=1}^\infty \in \upsilon\}.
\end{equation*}
A metric $d$ on $\partial X$ is said to be \emph{visual} if there exist $o \in X$, $\eps > 0$, and $C< \infty$ such that $C^{-1}e^{-\eps(\xi|\upsilon)_o} \leq d(\xi,\upsilon) \leq Ce^{-\eps(\xi|\upsilon)_o}$ for all $\xi,\upsilon \in \partial X$ (in Bonk-Schramm, the set of visual metrics is called the \emph{canonical $B$-structure} on $\partial X$ \cite[page~282]{BS}). Visual metrics on $\partial X$ always exist and are unique up to snowflake equivalence, and it is clear from the definition that any snowflake of a visual metric is visual and any metric biLipschitz equivalent to a visual metric is visual. Boundaries are always complete and bounded.

An injective map $f: Y \to Z$ between metric spaces is a {\it snowflake embedding} if for all $x, y \in Y$,
$$\lambda^{-1} d_Y(x,y)^\alpha \le d_Z(f(x),f(y)) \le \lambda d_Y(x,y)^\alpha,$$
for some $\alpha\in(0,\infty)$ and $\lambda<\infty$ and is a {\it power quasisymmetric embedding} if for all distinct triples $x, y, z \in Y$,
$$\frac{d_Z(f(x),f(z))}{d_Z(f(x),f(y))} \le \eta_{\alpha,\lambda}\left( \frac{d_Y(x,z)}{d_Y(x,y)} \right)$$
for some $\alpha\in(0,\infty)$ and $\lambda<\infty$, where
$$\eta_{\alpha,\lambda}(t) = 
\begin{cases} 
\lambda t^{1/\alpha} & 0 < t < 1 \\[4pt]
\lambda t^\alpha & 1 \le t
\end{cases}.$$
Surjective snowflake embeddings and power quasisymmetric embeddings are called {\it snowflake equivalences} and {\it power quasisymmetric equivalences}, respectively. It is clear that a biLipschitz embedding/equivalence is a snowflake embedding/equivalence, which in turn is a power quasisymmetric embedding/equivalence. Thus, since any two visual metrics on the boundary of a hyperbolic metric space are snowflake equivalent, the statement ``$\partial X'$ is power quasisymmetrically equivalent to $\partial X$'' is well-defined independent of the choice of visual metric.

The {\it Assouad conformal dimension} of $\partial X$ is the infimum over all $s\geq 0$ such that $\partial X$ is power quasisymmetrically equivalent to a space with sharp Assouad dimension $\leq s$.

\begin{remark}[Conformal Dimensions]
\label{rmk:confdim}
There are other notions of conformal dimension common in the literature that end up being equivalent to the Assouad conformal dimension for boundaries of hyperbolic groups (see \cite{SEB} and references therein). First of all, we note that the typical definition of Assouad conformal dimension is the infimum of sharp Assouad dimensions over all metrics {\it quasisymmetric} to a visual metric, instead of power quasisymmetric. Quasisymmetries are more general than power quasisymmetries, but the two notions coincide for homeomorphisms between uniformly perfect spaces \cite[Corollary~3.12]{TV}, and boundaries of hyperbolic groups are uniformly perfect (this follows from their local self-similarity \cite[Theorem~2.3.2]{Buyalo}). One can define the {\it Ahlfors regular conformal dimension} of a metric space $(Z,d)$ as the infimum of sharp Assouad dimensions over all {\it Ahlfors regular} metrics on $Z$ quasisymmetric to $d$. However, the two notions coincide on uniformly perfect spaces \cite[Proposition~2.2.6, \S7.1]{Tyson}.
\end{remark}

\section{Euclidean Dyadic Hyperbolic Fillings}
\label{sec:Euclidean}

In order to construct the Laplace operator used in the proof of our main result (Theorem~\ref{thm:main}), we need to employ a quasi-isometric model of our hyperbolic group with specific combinatorial structure. It turns that hyperbolic fillings of Euclidean cubes and their subsets provide structure. We we will use fillings constructed from dyadic scales, but any other fixed geometric scale would work equally well, up to the implicit constants.

\begin{definition}[Euclidean Dyadic Hyperbolic Fillings] \label{def:Euclidean}
Consider the sequence of dyadic fractions $\Dyd_0 := \{0,1\} \sbs \Dyd_1 := \{0,\frac12,1\} \sbs \Dyd_2 := \{0,\frac14,\frac12,\frac34,1\} \sbs \dots [0,1]$, given, in general, by the formula $\Dyd_n := \{j2^{-n}: j \in \Z\cap[0,2^n]\}$ for $n\geq 0$. Each set $\Dyd_n$ naturally carries the structure of a directed path graph, where $x\to y$ is a directed edge if and only if $y-x=2^{-n}$. Here and henceforth, we use the notation ``$x \to y$" to denote the ordered pair $(x,y)$ whenever $(x,y)$ is a directed edge in a directed graph.

Let $m\in\N$ and $n\geq 0$. We consider the $m$-dimension grid graph defined by $\Dyd_n^m := \Dyd_n \square \Dyd_n \square \dots \Dyd_n$. The maximal degree of a vertex in $\Dyd_n^m$ is $2m$. Form the total graph $\Dyd^m$ by vertically gluing together the grid graphs $\Dyd^m_n$ (thought of as horizontal levels) and a basepoint $\{*\}$ on top. Specifically,
$$\Dyd^m := \sqcup_{n\geq -1} \Dyd_n^m := \cup_{n\geq -1} \{n\}\times \Dyd_n^m,$$
where we interpret $\{-1\}\times \Dyd_{-1}^m$ as $\{*\}$. We will use the notation $\{n\} \times u$ to denote elements of $\{n\}\times\Dyd_n^m$, in order to more clearly distinguish the first coordinate of the vertex as indicating its level.

The {\it horizontal edge set $E_h$} of $\Dyd^m$ is defined to be the set of edges internal to each grid graph $\{n\}~\times~\Dyd_n^m$, which we identify with $\Dyd_n^m$. That is, the horizontal edges are of the form $(\{n\} \times u) \to (\{n\} \times v)$ where $u \to v$ is a directed edge in $\Dyd_n^m$.

For $n\geq 1$, we connect each vertex $\{n-1\}\times u \in \{n-1\}\times \Dyd_{n-1}^m$ with its corresponding vertex $\{n\}\times u \in \{n\}\times \Dyd_n^m$ sitting directly below it, with direction $(~\{~n~\}~\times~u~)~\to~(~\{~n~-~1~\}~\times~u~)$. The {\it vertical edge set} $E_v$ consists of these directed edges together with all directed edges of the form $x\to *$, where $x \in \{0\}\times \Dyd_0^m$ and $*$ is the basepoint. The degree of $*$ in the graph $\Dyd^m$ is $2^m$, and all other vertices have degree at most $2m+2$. See Figure~\ref{fig:Hyp(Z)} for an illustration of $\Dyd^2$.
\end{definition}

\begin{definition}[SubEuclidean Dyadic Hyperbolic Fillings] \label{def:subEuclidean}
Let $m\in\N$ and $Z \sbs [0,1]^m$ be nonempty and compact. For each $n\geq 0$, let $\D_n(Z)$ denote the collection of $n$th dyadic cubes which intersect $Z$. Specifically, $Q \in \D_n(Z)$ if $Q = [x_1,y_1] \times [x_2,y_2] \times \dots [x_m,y_m]$, where each $x_i \to y_i$ is an edge of $\Dyd_{n}$, and $Q \cap Z \neq \emptyset$. Given $Q \in \D_n(Z)$, let $Q^{(0)}$ denote its 0-skeleton. That is, $Q^{(0)}$ is the set of vertices of the form $(z_1,\dots z_m) \in \Dyd^m_n$ where $z_i \in \{x_i,y_i\}$ and $Q = [x_1,y_1] \times \dots [x_m,y_m]$. We let $\Hyp(Z)$ be the induced subgraph of $\Dyd^m$ with vertex set $\cup_{n\geq 0} \cup_{Q \in \D_n(Z)} \{n\} \times Q^{(0)} \cup \{*\}$, equipped with intrinsic (unweighted) shortest path metric $d_{\Hyp(Z)}$. We emphasize here that the union is not disjoint; a point $u\in[0,1]^m$ can belong to $Q^{(0)}$ for multiple $Q\in \D_n(Z)$ for a fixed $n\geq 0$. We call $\Hyp(Z)$ the {\it (Euclidean dyadic) hyperbolic filling} of $Z$. See Figure~\ref{fig:Hyp(Z)} for an illustration of the hyperbolic filling of the 4-corner Cantor subset\footnote{This subset is formed by starting with the square $[0,1]^2$, keeping its 4 corner subsquares of side length $1/4$ and throwing away the rest of the set, then keeping inside each of those subsquares their respective 4 corner subsquares of side length $1/16$ and throwing away the rest of the set, ad infinitum. See \cite[Fig.~1.2]{Falconer}.} of $[0,1]^2$.
\end{definition}

\begin{figure}
\centering

\begin{tikzpicture}[
    scale=.72,
    levelMinus1/.style={
        circle,draw=black,fill=orange!90!yellow,inner sep=2pt
    },
    level0/.style={
        circle,draw=black,fill=red!80,inner sep=1.8pt
    },
    level1/.style={
        circle,draw=black,fill=blue!70,inner sep=1.5pt
    },
    level2/.style={
        circle,draw=black,fill=teal!60,inner sep=1.2pt
    },
    level3/.style={
        circle,draw=black,fill=violet!60,inner sep=.7pt
    },
    verticalEdge/.style={gray!75,thick},
    horizontalEdge/.style={black,thick}
]


\def\Side{3.6}

\def\Slant{0.65}

\def\Yzero{5.2}
\def\Yone{1.4}
\def\Ytwo{-2.4}
\def\Ythree{-6.2}

%

\newcommand{\squaregrid}[6]{%
    \pgfmathsetmacro{\half}{\Side/2}
    \pgfmathsetmacro{\step}{\Side/(#2-1)}

    \foreach \r in {1,...,#2}{
        \foreach \c in {1,...,#2}{
            \pgfmathtruncatemacro{\k}{\c+(\r-1)*#2}
            \pgfmathsetmacro{\x}{#3-\half+(\c-1)*\step}
            \pgfmathsetmacro{\y}{#4+\half-(\r-1)*\step}
            \coordinate (c#1-\k) at (\x,\y);
        }
    }

    \foreach \r in {1,...,#2}{
        \pgfmathtruncatemacro{\a}{1+(\r-1)*#2}
        \pgfmathtruncatemacro{\b}{#2+(\r-1)*#2}
        \draw[horizontalEdge,#6]
            (c#1-\a)--(c#1-\b);
    }

    \foreach \c in {1,...,#2}{
        \pgfmathtruncatemacro{\b}{\c+(#2-1)*#2}
        \draw[horizontalEdge,#6]
            (c#1-\c)--(c#1-\b);
    }

    \pgfmathtruncatemacro{\N}{#2*#2}
    \foreach \k in {1,...,\N}{
        \node (#1-\k) at (c#1-\k) [#5] {};
    }
}


\newcommand{\squarecoords}[4]{%
    \pgfmathsetmacro{\half}{\Side/2}
    \pgfmathsetmacro{\step}{\Side/(#2-1)}

    \foreach \r in {1,...,#2}{
        \foreach \c in {1,...,#2}{
            \pgfmathtruncatemacro{\k}{\c+(\r-1)*#2}
            \pgfmathsetmacro{\x}{#3-\half+(\c-1)*\step}
            \pgfmathsetmacro{\y}{#4+\half-(\r-1)*\step}
            \coordinate (c#1-\k) at (\x,\y);
        }
    }
}

%

\newcommand{\subgrid}[7]{%
    \pgfmathtruncatemacro{\m}{#5-1}

    \foreach \d in {0,...,\m}{
        \pgfmathtruncatemacro{\a}{#4+(#3+\d-1)*#2}
        \pgfmathtruncatemacro{\b}{\a+#5-1}
        \draw[horizontalEdge,#7]
            (c#1-\a)--(c#1-\b);
    }

    \foreach \d in {0,...,\m}{
        \pgfmathtruncatemacro{\a}{#4+\d+(#3-1)*#2}
        \pgfmathtruncatemacro{\b}{\a+(#5-1)*#2}
        \draw[horizontalEdge,#7]
            (c#1-\a)--(c#1-\b);
    }

    \foreach \rr in {0,...,\m}{
        \foreach \cc in {0,...,\m}{
            \pgfmathtruncatemacro{\k}{
                #4+\cc+(#3+\rr-1)*#2
            }
            \node (#1-\k) at (c#1-\k) [#6] {};
        }
    }
}

%

\newcommand{\refine}[6]{%
    \foreach \r in {#5}{
        \foreach \c in {#6}{
            \pgfmathtruncatemacro{\a}{
                \c+(\r-1)*#2
            }
            \pgfmathtruncatemacro{\b}{
                2*\c-1+(2*\r-2)*#4
            }
            \draw[verticalEdge]
                (#1-\a)--(#3-\b);
        }
    }
}


\foreach \S/\panelx/\full in {
    L/-4.8/1,
    R/ 4.8/0
}{
\begin{scope}[xshift=\panelx cm]


    \node (root\S)
        at (-1.4,8.1)
        [levelMinus1] {};

    \pgfmathsetmacro{\Xzero}{-1.2}
    \pgfmathsetmacro{\Xone}{\Xzero+\Slant}
    \pgfmathsetmacro{\Xtwo}{\Xzero+2*\Slant}
    \pgfmathsetmacro{\Xthree}{\Xzero+3*\Slant}


    \squaregrid
        {0\S}{2}
        {\Xzero}{\Yzero}
        {level0}{}

    \foreach \i in {1,...,4}{
        \draw[verticalEdge]
            (root\S)--(0\S-\i);
    }


    \squaregrid
        {1\S}{3}
        {\Xone}{\Yone}
        {level1}{}

    \refine
        {0\S}{2}
        {1\S}{3}
        {1,2}{1,2}


    \ifnum\full=1

        \squaregrid
            {2\S}{5}
            {\Xtwo}{\Ytwo}
            {level2}{}

        \refine
            {1\S}{3}
            {2\S}{5}
            {1,2,3}{1,2,3}

        \squaregrid
            {3\S}{9}
            {\Xthree}{\Ythree}
            {level3}{thin}

        \refine
            {2\S}{5}
            {3\S}{9}
            {1,2,3,4,5}
            {1,2,3,4,5}


    \else


        \squarecoords
            {2\S}{5}
            {\Xtwo}{\Ytwo}

        \foreach \r/\c in {
            1/1,
            1/4,
            4/1,
            4/4
        }{
            \subgrid
                {2\S}{5}
                {\r}{\c}{2}
                {level2}{}
        }

        \refine
            {1\S}{3}
            {2\S}{5}
            {1,3}{1,3}


        \squarecoords
            {3\S}{9}
            {\Xthree}{\Ythree}

        \foreach \r/\c in {
            1/1,
            1/7,
            7/1,
            7/7
        }{
            \subgrid
                {3\S}{9}
                {\r}{\c}{3}
                {level3}{thin}
        }

        \refine
            {2\S}{5}
            {3\S}{9}
            {1,2,4,5}
            {1,2,4,5}

    \fi

\end{scope}
}

\end{tikzpicture}

\caption{
Left: The dyadic hyperbolic filling $\Dyd^2$ of $[0,1]^2$.
Right: The dyadic hyperbolic filling $\Hyp(Z)$ of the
4-corner Cantor set $Z\sbs[0,1]^2$.
}
\label{fig:Hyp(Z)}
\end{figure}

For future use, we make the following observations for a given $n\geq 1$ and cube $Q \in \D_n(Z)$:
\begin{itemize}
    \item There exists a unique cube $\hat{Q} \in \D_{n-1}(Z)$, called the {\it parent} of $Q$, such that $\hat{Q} \sps Q$.
    \item There exists a unique vertex $u_Q \in Q^{(0)} \cap \hat{Q}^{(0)}$.
    \item The ordered pair $(\{n\}\times u_Q) \to (\{n-1\}\times u_Q)$ is a vertical edge of $\Hyp(Z)$.
    \item The diameter with respect to the intrinsic path metric of $\{n\} \times Q^{(0)}$ is $m$. In particular, if $v \in Q^{(0)}$, then $\{n\} \times v$ is connected to $\{n\}\times u_Q$ via a horizontal edge path of $\Hyp(Z)$ of length at most $m$.
\end{itemize}

We will need the following basic but important lemma regarding the geometry of hyperbolic fillings.

\begin{lemma} \label{lem:Hyp(Z)distance}
Let $m\in\N$ and $Z \sbs [0,1]^m$ be nonempty and compact. For every constant $C<\infty$, there exists $C'<\infty$ such that for all $n\geq 0$ and $\eps \in \{0,1\}$, if $\{n\} \times u, \{n+\eps\} \times v \in \Hyp(Z)$ with $\|u-v\| \leq C2^{-n}$, then $d_{\Hyp(Z)}(\{n\} \times u, \{n+\eps\} \times v) \leq C'$, where $\|\cdot\|$ denotes the $\ell^\infty$ norm on $[0,1]^m$.
\end{lemma}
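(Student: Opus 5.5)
We reduce the case $\eps=1$ to the case $\eps=0$. Then we handle $\eps=0$ by passing to a coarser level, where the two points become close in the graph, and descending from there.

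\emph{Reduction to $\eps=0$.} Suppose $\{n+1\}\times v\in\Hyp(Z)$. Then $v\in Q^{(0)}$ for some $Q\in\D_{n+1}(Z)$. By the observations following Definition~\ref{def:subEuclidean}, $\{n+1\}\times v$ is joined to $\{n+1\}\times u_Q$ by a horizontal path of length at most $m$. Moreover, $(\{n+1\}\times u_Q)\to(\{n\}\times u_Q)$ is a vertical edge, and $\{n\}\times u_Q\in\Hyp(Z)$ because $u_Q\in\hat Q^{(0)}$. Since $\|u_Q-v\|\le 2^{-(n+1)}$, we get $\|u-u_Q\|\le (C+1)2^{-n}$. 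This reduces the problem to $\eps=0$ with constant $C+1$, at a cost of $m+1$ in distance.

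\emph{The case $\eps=0$: ascending.} Let $\{n\}\times u,\{n\}\times v\in\Hyp(Z)$ with $\|u-v\|\le C2^{-n}$, and choose $k\in\N$ with $2^k\ge C$. If $n\le k$, both points lie in the finite set of vertices of levels $\le k$, which is connected through $*$. The distance is then at most $2(k+1)+m$: climb from each point to $*$ using the parent moves above, each costing at most $m+1$ per level. So assume $n>k$. From $\{n\}\times u$, apply the parent move $k$ times: go horizontally to $u_Q$ and then up one level. This produces a point $\{n-k\}\times u'\in\Hyp(Z)$ at distance at most $k(m+1)$, with $\|u-u'\|\le \sum_{j<k}2^{-(n-j)}\le 2^{-(n-k)}$. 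Do the same for $v$, obtaining $v'$. Then $\|u'-v'\|\le (C2^{-k}+2)2^{-(n-k)}\le 3\cdot 2^{-(n-k)}$, so it remains to bound the distance at level $n-k$ between two vertices at sup-distance at most $3$ grid steps.

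\emph{Closing the gap at a coarse level (the main obstacle).} The difficulty is that the horizontal path from $u'$ to $v'$ in the full grid $\Dyd^m_{n-k}$ may leave $\Hyp(Z)$, since the intermediate cubes might not meet $Z$. The plan is to go up a bounded number of additional levels. Iterating the parent move $j$ more times gives points $u'',v''$ at level $n-k-j$ with $\|u''-v''\|\le (3\cdot 2^{-j}+2)\,2^{-(n-k-j)}$, so the gap never drops below about $2$ grid steps this way. Instead, we use that each ancestor is a cube $Q_u\in\D_{\ell}(Z)$ containing $u''$ as a vertex, and similarly $Q_v$. Since $\|u''-v''\|$ is bounded by a fixed number of steps, the cubes $Q_u$ and $Q_v$ are within bounded grid distance of each other. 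The hard point is to connect them through cubes that actually meet $Z$. To do this, take a further ancestor level $\ell'=\ell-2$, say, at which the ancestor cubes $\tilde Q_u$ and $\tilde Q_v$ either coincide or share a vertex. Because $\|u''-v''\|\le 3\cdot 2^{-\ell}$, at scale $2^{-\ell+2}$ the two dyadic ancestor cubes are either equal or adjacent. If they are adjacent, they share at least one vertex $w$, since closed adjacent dyadic cubes touch. Every vertex of $\tilde Q_u$ lies in $\Hyp(Z)$, and horizontal edges of $\tilde Q_u^{(0)}$ are edges of the induced subgraph, so $u$-side and $v$-side points both reach $\{\ell'\}\times w$ within $m$ horizontal steps each. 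The point requiring care is that $\tilde Q_u,\tilde Q_v$ might be disjoint rather than adjacent. We avoid this by noting that the parent moves keep $u''$ inside the descending chain of cubes containing it, so $u\in \tilde Q_u$ and $v\in\tilde Q_v$. Choosing the number of extra levels so that $C2^{-n}$ is at most the side length $2^{-\ell'}$ forces two cubes of that side containing $u,v$ respectively to intersect, which for dyadic cubes of equal size means they share a vertex. All counts are bounded in terms of $m$ and $C$ alone, giving $C'$.
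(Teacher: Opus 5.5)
Your proof is correct in substance and follows the paper's route. You climb a number of levels depending only on $C$ along the parent chain (at most $m$ horizontal steps to $u_Q$, then the vertical edge) until the ancestor cubes of $Q_u$ and $Q_v$ intersect, and then join both points through a common vertex $w$ of those ancestors. The paper does this in one pass for both values of $\eps$ (using $k+\eps$ alternations on the $v$ side), so your reduction to $\eps=0$ and the intermediate stop at level $n-k$ are unnecessary.

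Only bookkeeping needs fixing:
\begin{enumerate}
\item The intersection criterion requires $\|u-v\|$ to be strictly less than the side length. Your choice $\ell'=n-k-2$ provides this, but the ``at most'' in your last sentence is false in general; e.g.\ in dimension one, $u=h\in[0,h]$ and $v=2h\in[2h,3h]$.
\item Since you ascend $k+2$ levels, the small-$n$ case must be $n\le k+1$, not $n\le k$.
\item In that case your own parent-move count gives $2(k+1)(m+1)+2$, not $2(k+1)+m$.
\end{enumerate}
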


\begin{proof}
Let $C<\infty$, $n\geq 0$, $\eps \in \{0,1\}$, and $\{n\} \times u, \{n+\eps\} \times v \in \Hyp(Z)$ with $\|u-v\| \leq C2^{-n}$. Let $Q_u \in \D_n(Z)$ and $Q_v \in \D_{n+\eps}(Z)$ with $u \in Q_u^{(0)}$ and $v \in Q_v^{(0)}$. It follows from $\|u-v\| \leq C2^{-n}$ that there is a number $k \in \N$, depending only on $C$ (and possibly $m$), and cubes $\tilde{Q}_u,\tilde{Q}_v \in \D_{n-k}(Z)$ such that $u\in \tilde{Q}_u$, $v\in \tilde{Q}_v$, and $\tilde{Q}_u \cap \tilde{Q}_v \neq\emptyset$. From this, it is easy to see, using the observations following Definition~\ref{def:subEuclidean}, that there is a path in $\Hyp(Z)$ connecting $\{n\} \times u$ to $\{n+\eps\} \times v$ of length at most $m(2k+3)+2k+1$. Indeed, if we let $w \in \tilde{Q}_u^{(0)} \cap \tilde{Q}_v^{(0)}$, then $\{n-k\} \times w$ can be reached from $\{n\}\times u$ by a sequence of edges that starts with (at most) $m$ horizontal edges, then alternates, $k$ times, between one vertical edge and (at most) $m$ horizontal edges. The same thing is true for reaching $\{n-k\} \times w$ from $\{n+\eps\}\times v$, except with $k+\eps$ alternations (note that $k+\eps\leq k+1$). This shows that the desired conclusion holds with $C' \leq m(2k+3)+2k+1$.
\end{proof}

It is important for us to quantify the exponential growth rate of the graphs $\Hyp(Z)$, and we do so in the next definition and lemma.

\begin{definition}[Graph Dyadic Growth Rate]
\label{def:dyadicgrowthrate}
Let us say that one vertex $\{n+j\}\times u \in \Hyp(Z)$ {\it lies below} another $\{n\}\times v \in \Hyp(Z)$ if $j > 0$ and there are cubes $Q_u \in \D_{n+j}(Z)$, $Q_v \in \D_n(Z)$ such that $u \in Q_u^{(0)}$, $v \in Q_v^{(0)}$, and $Q_u \sbs Q_v$. Let $r \geq 0$. We say that $\Hyp(Z)$ has {\it dyadic growth rate} $\leq r$ if there exists a constant $C<\infty$ such that for every $\{n\} \times v \in \Hyp(Z)$ and every $j > 0$,
$$|\{\{n+j\}\times u \in \Hyp(Z): \{n+j\}\times u \text{ lies below } \{n\}\times v\}| \leq C2^{rj}.$$
\end{definition}

\begin{lemma} \label{lem:Assouad->dyadicgrowthrate}
Let $m\in\N$ and $Z \sbs [0,1]^m$ be nonempty and compact. Let $r \in [0,m]$. If $Z$ has sharp Assouad dimension $\leq r$, then $\Hyp(Z)$ has dyadic growth rate $\leq r$.
\end{lemma}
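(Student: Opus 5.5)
Fix $\{n\}\times v \in \Hyp(Z)$ with witnessing cube $Q_v \in \D_n(Z)$, and $j>0$. The plan is to count the cubes first and then the vertices. If $\{n+j\}\times u$ lies below $\{n\}\times v$, then $u \in Q_u^{(0)}$ for some $Q_u \in \D_{n+j}(Z)$ with $Q_u \sbs Q_{v}'$, where $Q_v'\in\D_n(Z)$ is some cube having $v$ as a vertex. The definition lets $Q_v$ be any such cube, but $v$ is a vertex of at most $2^m$ dyadic cubes of level $n$. Hence the number of such $u$ is at most
\[
2^m \cdot 2^m \cdot \max_{Q'} \bigl|\{Q_u \in \D_{n+j}(Z) : Q_u \sbs Q'\}\bigr|,
\]
the second factor $2^m$ counting the vertices of each $Q_u$, with the maximum over cubes $Q'\in\D_n(Z)$. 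So it suffices to show that the number of level $n+j$ dyadic cubes meeting $Z$ and contained in a fixed level-$n$ cube $Q'$ is at most $C2^{rj}$.

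To do this, pick $z \in Q'\cap Z$. Every $Q_u\sbs Q'$ meeting $Z$ contains a point of $Z\cap Q'\sbs Z \cap B_R(z)$, where $R = 2^{-n}$ and balls are taken in the $\ell^\infty$ norm (other norms only change constants by factors depending on $m$). By the sharp Assouad dimension hypothesis, with $\rho = 2^{-n-j}$, the set $Z\cap B_R(z)$ can be covered by at most $C(R/\rho)^r = C2^{rj}$ balls of radius $\rho$ centered in $Z$. If $R=\rho$ is an issue, note $j>0$ so $\rho<R$. Each such ball has $\ell^\infty$-diameter at most $2\cdot 2^{-n-j}$, so it meets at most $3^m$ dyadic cubes of level $n+j$. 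Each $Q_u$ meets $Z$ at a point lying in some covering ball, so $Q_u$ meets that ball. Therefore the number of cubes $Q_u$ is at most $3^m C 2^{rj}$.

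Combining the two steps gives the bound $4^m 3^m C\, 2^{rj}$, with a constant independent of $n$, $v$ and $j$, which is the dyadic growth rate $\leq r$. The only point needing care, and the expected main obstacle, is the metric bookkeeping. The Assouad bound is phrased for balls in $Z$ with its restricted metric, so centers lie in $Z$. Converting the covering radius into the count "meets at most $3^m$ cubes" requires a fixed norm, and the norms on $\mathbb{R}^m$ being comparable absorbs this into the constant. The hypothesis $r\le m$ plays no role beyond consistency.
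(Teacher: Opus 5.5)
Your proof is correct and follows the paper's route: reduce to counting level-$(n+j)$ dyadic cubes meeting $Z$ inside a level-$n$ cube, cover the relevant part of $Z$ by $C2^{rj}$ balls of radius $2^{-n-j}$ using the sharp Assouad hypothesis, and observe that each ball meets boundedly many such cubes. Your bookkeeping is more careful than the paper's, which calls the reduction obvious and centers its ball at $v$ (possibly not in $Z$): you account for the up to $2^m$ cubes $Q_v'$ having $v$ as a vertex, the $2^m$ vertices of each $Q_u$, and center the ball at a point of $Z$. The only nits are open versus closed balls, which affect whether $Q'\subseteq B_{2^{-n}}(z)$ and whether the per-ball count is $3^m$ or $4^m$, and these change only constants.
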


\begin{proof}
Assume that $Z$ has sharp Assouad dimension $\leq r$. Let $\{n\}\times v \in \Hyp(Z)$ and $j>0$. Let $Q_v \in \D_n(Z)$ such that $v \in Q_v^{(0)}$. Obviously, to prove the lemma, it suffices to show that
$$|\{Q\in\D_{n+j}(Z): Q \sbs Q_v\}| \leq C2^{rj}$$
for some $C<\infty$ (independent of $j,Q_v$). Since $\diam(Q_v) = 2^{-n}$ and $v\in Q_v$, the triangle inequality implies $Q_v \sbs B_{2^{-n}}(v)$. Here and throughout the proof, the set $B_R(x)$ indicates the ball in $[0,1]^m$, with respect to the $\ell^\infty$ metric, of radius $R$ and center $x$. Hence, it suffices to show that
$$|\{Q\in\D_{n+j}(Z): Q \sbs B_{2^{-n}}(v)\}| \leq C2^{rj}.$$

Since $Z$ has sharp Assouad dimension $\leq r$, there exists a constant $C'\in\N$ (independent of $j,Q_v$) such that $B_{2^{-n}}(v)$ can be covered by $C'2^{rj}$ balls $\{B_{2^{-n-j}}(x_i)\}_{i=1}^{C'2^{rj}}$ of radius $2^{-n-j}$. Hence, it suffices to show that
$$\sum_{i=1}^{C'2^{rj}}|\{Q\in\D_{n+j}(Z): Q \cap B_{2^{-n-j}}(x_i) \neq \emptyset\}| \leq C2^{rj}.$$
It is easy to see that the cardinality inside this summation is bounded (independent of $j,Q_v$), completing the proof.
\end{proof}

The following theorem can be proved using arguments that are now quite standard in the hyperbolic metric geometry literature. Because it is of such importance for our main result (Theorem~\ref{thm:main}), we include a detailed proof in the Appendix (see Theorem~\ref{thm:qifillingA}).

\begin{theorem}[Fundamental Theorem of SubEuclidean Hyperbolic Fillings (Theorem~\ref{thm:qifillingA})] \label{thm:qifilling}
Let $\Gamma$ be a finitely generated hyperbolic group. Let $m\in\N$ and $Z \sbs [0,1]^m$ such that $\partial\Gamma$ is power quasisymmetrically equivalent to $Z$. Then $\Gamma$ is quasi-isometric to the hyperbolic filling $\Hyp(Z)$ of Definition~\ref{def:subEuclidean}.
\end{theorem}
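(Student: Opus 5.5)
We will show that $\Hyp(Z)$ is a Gromov hyperbolic graph whose boundary, with a visual metric, is snowflake (hence power quasisymmetrically) equivalent to $Z$. We will then invoke the Bonk--Schramm theory: two visual hyperbolic spaces with power quasisymmetrically equivalent boundaries are quasi-isometric. The target statement is the standard one \cite{BS}: if $X,Y$ are visual Gromov hyperbolic spaces with bounded growth at some scale, then any power quasisymmetric equivalence $\partial X\to\partial Y$ extends to a quasi-isometry $X\to Y$. More concretely, both spaces are roughly isometric to the hyperbolic cones $\mathrm{Con}(\partial X)$ and $\mathrm{Con}(\partial Y)$, and a snowflake/power quasisymmetric map of compact boundaries induces a quasi-isometry of cones. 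For $\Gamma$ these hypotheses are classical: the Cayley graph is visual (cocompactness, non-elementary or else trivially handled) and $\partial\Gamma$ is doubling.

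The first step is to establish the geometry of $\Hyp(Z)$. Write $z(\{n\}\times u)=u$ and record the level $n$. We will compare $d_{\Hyp(Z)}$ with the model function
$$\rho(\{n\}\times u,\{n'\}\times u') = n+n'-2\min\{n,n',-\log_2\|u-u'\|\}.$$
We aim to show $|d-\rho|\le C$. The upper bound will follow from Lemma~\ref{lem:Hyp(Z)distance}: ascend from each point along vertical edges, each step using at most $m$ horizontal moves, until reaching level $k\approx\min\{n,n',-\log_2\|u-u'\|\}$. There the two ancestor vertices are within $C2^{-k}$, so they are boundedly close. For the lower bound, we note that a single edge changes the level by at most $1$, and changes $u$ by at most $2^{-\ell}$ at level $\ell$. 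Hence a path of length $L$ between the points must climb to some level $\le \min(n,n')-$ roughly $(L-|n-n'|)/2$. At that level, summing a geometric series shows the points are within $C2^{-\text{level}}$, which yields $L\ge\rho-C$. Since $\rho$ is, up to an additive constant, the distance in the standard hyperbolic cone over $(Z,\|\cdot\|)$, we get that $\Hyp(Z)$ is hyperbolic. We also get that $(x|y)_*\approx -\log_2\|z(x)-z(y)\|$ for points deep in the filling. Consequently, $\partial\Hyp(Z)$ is identified with $Z$, via vertical geodesic rays converging to points of $Z$ (using compactness and nonemptiness of each $\D_n(Z)$). Moreover, $\|\cdot\|$ is a visual metric with parameter $\eps=\log 2$. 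Visuality of $\Hyp(Z)$ is also clear: every vertex lies within distance $m$ of a vertical ray to the boundary, because every $Q\in\D_n(Z)$ contains a descendant chain of cubes meeting $Z$.

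The second step is to put together the equivalences. A visual metric on $\partial\Gamma$ is power quasisymmetric to $Z=\partial\Hyp(Z)$ with its visual metric $\|\cdot\|$. $\Hyp(Z)$ has bounded degree (at most $\max(2^m,2m+2)$), so it has bounded growth. Both $\Gamma$ and $\Hyp(Z)$ are visual, geodesic, and hyperbolic, so \cite[Theorem~7.4, Theorem~8.2]{BS}-type results give quasi-isometries $\Gamma\simeq \mathrm{Con}(\partial\Gamma)$ and $\Hyp(Z)\simeq\mathrm{Con}(Z)$. Finally, a power quasisymmetric equivalence between uniformly perfect bounded spaces induces a quasi-isometry between their cones. $Z$ is uniformly perfect because it is quasisymmetric to $\partial\Gamma$, which is uniformly perfect by the remark on conformal dimensions. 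Composing these maps proves the theorem. The degenerate case where $\Gamma$ is finite or virtually cyclic (boundary empty or two points) will be handled separately: in that case $\Hyp(Z)$ is bounded or quasi-isometric to a line (respectively a ray, which needs care). We expect that such cases are either excluded by convention or checked directly.

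The main obstacle we anticipate is the lower bound $d\ge\rho-C$ in the first step, together with verifying that the boundary identification is bi-Hölder-exact (snowflake with exponent $1$ for parameter $\log 2$) rather than merely topological. This requires careful bookkeeping of how far horizontal edges can move $u$ relative to the current level. It also requires checking that vertices of $\Hyp(Z)$ at level $n$ are within $2^{-n}$ of $Z$, which holds since each vertex belongs to some cube $Q$ meeting $Z$ with $\diam Q=2^{-n}$.
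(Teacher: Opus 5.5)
Your plan of analyzing $\Hyp(Z)$ directly and then invoking Bonk--Schramm is reasonable, but Step 1 as stated is false. The upper bound $d_{\Hyp(Z)}\le\rho+C$ fails, and with it the rough isometry $|d-\rho|\le C$, the estimate $(x|y)_*\approx-\log_2\|z(x)-z(y)\|$, and the claim that $\|\cdot\|$ is a visual metric on $\partial\Hyp(Z)$.

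A vertex $\{n\}\times u$ has an upward edge only if $u\in\Dyd^m_{n-1}$, so your ascent costs up to $m+1$ edges per level, and this overhead does not amortize. Take $m=1$, $Z=[0,1]$, and let $u_n\in\Dyd_n$ be the point nearest to $1/3$.
\begin{itemize}
\item A path from $\{n\}\times u_n$ to $*$ can be made level-monotone without lengthening it: an excursion below level $\ell$ between $u,u'\in\Dyd_\ell$ uses at least $2^{\ell+1}|u-u'|$ horizontal edges, versus $2^{\ell}|u-u'|$ at level $\ell$.
\item If the path makes $s_\ell$ net horizontal steps at level $\ell$ and exits level $0$ at $\{0\}\times t$ with $t\in\{0,1\}$, then $2^n(t-u_n)=\sum_{\ell=0}^n s_\ell 2^{n-\ell}$. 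This is a signed-binary expansion of an integer within $1$ of $-2^n/3$ or of $2^{n+1}/3$. Such integers have essentially alternating binary digits, so their minimal signed-binary weight is about $n/2$.
\item Hence $d(\{n\}\times u_n,*)\ge\tfrac32 n-O(1)$, whereas $\rho$ (and the cone metric) predicts $n+O(1)$.
\end{itemize}
The same happens for $Z=\partial[0,1]^2$, which is power quasisymmetric to the boundary of a closed surface group, so this is not an artifact of the example.

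Consequently, pairs of boundary points at Euclidean distance $2^{-n}$ have Gromov product about $\tfrac32 n$ near $1/3$ but about $n$ near $0$. No parameter $\eps$ makes $\|\cdot\|$ visual: $\partial\Hyp(Z)$ is only power quasisymmetric to $Z$, which is exactly what the paper asserts (Corollary~\ref{cor:BBS}). The snowflake exactness you flag as your main obstacle therefore cannot be proved.

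The purely vertical rays you use for visuality and for identifying $\partial\Hyp(Z)$ with $Z$ are also inadequate. An infinite vertical ray over $u$ forces $u$ to be a dyadic point of $Z$, so these rays reach only dyadic points. For a Cantor set $Z\sbs[0,1]$ containing no dyadic rationals, there are none at all.

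The repair is to settle for a coarse comparison, which is what your arguments actually give: $\rho-C\le d_{\Hyp(Z)}\le(m+1)\rho+C$. The lower bound does hold. Hence the map $\{n\}\times u\mapsto(z_u,2^{-n})$, with $z_u\in Z$ within $2^{-n}$ of $u$, is a quasi-isometry from $\Hyp(Z)$ to the Bonk--Schramm cone $\mathrm{Con}(Z)$. The chain $\Hyp(Z)\simeq\mathrm{Con}(Z)\simeq\mathrm{Con}(\partial\Gamma)\simeq\Gamma$, with the last two steps by Bonk--Schramm, then proves the theorem without identifying a visual metric on $\partial\Hyp(Z)$.

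This is essentially the paper's proof, with the cone playing the role of the Bj\"orn--Bj\"orn--Shanmugalingam filling $\Hyp_{\rm BBS}(Z)$. In that filling every vertex is joined to all nearby vertices on adjacent levels, so one ascends one level per edge and the boundary is snowflake equivalent to $Z$ (proof of Proposition~\ref{prop:BBS}). The paper transfers this to $\Hyp(Z)$ only up to quasi-isometry, via Lemmas~\ref{lem:Hyp(Z)distance} and~\ref{lem:BBS}, and then applies Theorem~\ref{thm:BS}.
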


\section{Admissible Markov Operators}
\label{sec:Markov}

In this section we define a class of Markov operators $P$ that we will used to construct actions of hyperbolic groups on $\ell^p$, see \S\ref{sss:strategy} for an informal discussion of the strategy.

Let $X$ be a set. Let $\M_{00}(X)$ denote the vector space of finitely supported signed measures on $X$. For $S\sbs X$, we write $\M_{00}(S)$ to denote the subspace of all $\mu\in\M_{00}(X)$ with $\supp \mu := \{x\in X: \mu(\{x\}) \neq 0\} \sbs S$. Let $P$ be a (locally finite) Markov operator on $X$, by which we mean a linear map $P: \M_{00}(X)\to\M_{00}(X)$ preserving the convex subset of probability measures.

\begin{definition}[Admissible Markov Operators]
\label{def:admissible}
Now assume that $(X,E)$ is a connected, directed, bounded degree graph, equipped with the shortest path metric $d_X$ and a basepoint $* \in X$. Let us say that a Markov operator $P$ on $X$ is {\it admissible} if it possesses the following properties: 
\begin{enumerate}
    \item \fbox{Gradation} There exists a partition $X = \sqcup_{n\geq -1} X_n$, called a {\it grading}, such that $X_{-1} = \{*\}$ is an absorbing state of $P$ and $P(\M_{00}(X_n)) \sbs \M_{00}(X_{n-1})$ for all $n\geq 0$.
    \item \fbox{Morse Property} There exists a constant $C_1<\infty$ such that the cardinality $|\supp P^n(\delta(x))|$ is at most $C_1$ for all $x\in X$ and all $n\geq 0$, where $\delta(x)$ denotes the Dirac measure at $x$.
    \item \fbox{Locality} There exists a constant $C_2<\infty$ such that $d_X(x,y) \leq C_2$ for all $x\in X$ and all $y \in \supp P(\delta(x))$.
\end{enumerate}
There are two additional important properties that an admissible Markov operator may satisfy:
\begin{enumerate}[resume]
    \item \fbox{Wasserstein $\frac{1}{2}$-Contractivity} there exists a partition of $E = E_{v} \sqcup E_{h}$ into {\it vertical} and {\it horizontal edges} such that
    \begin{itemize}
        \item for every vertical edge $(x_v \to y_v)\in E_{v}$, $x_v\neq *$ and $P(\delta(x_v)) = \delta(y_v)$ and
        \item for every horizontal edge $(x_h \to y_h)\in E_{h}$, $x_h,y_h \neq *$ and
        \begin{itemize}
            \item $P(\delta(y_h)-\delta(x_h)) = 0$ or
            \item there exists a finite set of horizontal edges $\{x^k_h \to y^k_h\}_k \sbs E_{h}$ and convex coefficients\footnote{{\it Convex coefficients} are a finite set of nonnegative real numbers that sum to 1.} $\{c_k\}_k \sbs [0,1]$ such that $\{x^k_h,y^k_h\}_k$ is pairwise disjoint and $P(\delta(y_h)-\delta(x_h)) = \frac12\sum_k c_k (\delta(y^k_h)-\delta(x^k_h))$.
        \end{itemize}
    \end{itemize}
\end{enumerate}
Let $r \in [0,\infty)$. Let us say that $P$ has {\it dyadic growth rate $\leq r$} if
\begin{enumerate}[resume]
    \item \fbox{Dyadic Growth Rate $\leq r$} there exists $C_3<\infty$ such that for all $x \in X$ and all $n \geq 0$, $|\{y \in X: x\in\supp P^n(\delta(y))\}| \leq C_3 2^{nr}$.
\end{enumerate}
\end{definition}

The two properties above are critical in proving our main results. They are used in tandem to prove uniform boundedness of representations we construct on $\ell^p$ for values of $p < \frac{r}{r-1}$ where $r$ is the dyadic growth rate (Lemma~\ref{lem:Lipschitz->bndd}). 

\begin{example}[Euclidean Dyadic Hyperbolic Fillings] \label{ex:Euclidean}
Recall the set of $n$th dyadic rationals $\Dyd_n$ from Definition~\ref{def:Euclidean}. For $n\geq 1$, we consider the Markov operator $P: \M_{00}(\Dyd_n) \to \M_{00}(\Dyd_{n-1})$ defined by
\begin{equation*}
    P(\delta(x)) = \begin{cases}
        \delta(x) & x \in \Dyd_{n-1} \\
        \frac12\delta(x-2^{-n})+\frac12\delta(x+2^{-n}) & x\in \Dyd_n\setminus \Dyd_{n-1}.
        \end{cases}
\end{equation*}

Let $m\in\N$. Recall the $m$-dimensional dyadic grid $\Dyd_n^m$ from Definition~\ref{def:Euclidean}. For $n\geq 1$, we form the product Markov operator $P_m: \M_{00}(\Dyd_n^m) \to \M_{00}(\Dyd_{n-1}^m)$ by
\begin{equation*}
    P_m(\delta(x_1,x_2,\dots x_m)) := P(\delta(x_1)) \otimes P(\delta(x_2)) \otimes \dots P(\delta(x_m)),
\end{equation*}
where the tensor product is defined on point mass measures by $\delta(y_1) \otimes \delta(y_2) \otimes \dots \delta(y_m) = \delta_{(y_1,y_2,\dots y_m)}$ and extended to all signed measures multilinearly.

Recall the Euclidean dyadic hyperbolic filling graph $\Dyd^m$ from Definition~\ref{def:Euclidean}. We define the Markov operator $P_m: \M_{00}(\Dyd^m) \to \M_{00}(\Dyd^m)$ by
\begin{equation*}
    P_m(\delta(n) \otimes \delta(u)) := \delta(n-1)\otimes P_m(\delta(u))
\end{equation*}
for all $\{n\}\times u \in \cup_{n\geq 1}\{n\}\times \Dyd^m_n$ and
\begin{equation*}
    P_m(\delta(y)) := \delta(*)
\end{equation*}
for all $y \in \cup_{n\in\{-1,0\}}\{n\}\times \Dyd^m_n$. Importantly, observe that $P_m(\M_{00}(\{n\}\times \Dyd_n^m)) \sbs \M_{00}(\{n-1\}\times \Dyd_{n-1}^m)$ for every $n\geq 0$. See Figure~\ref{fig:Markov-vertex} for an illustration of the action of $P_2$ and $P_2^2 = P_2 \circ P_2$ on a Dirac measure $\delta(u)$.
\end{example}

\begin{figure}
\centering

\begin{tikzpicture}[
    scale=.6,
    level0/.style={circle,draw=black,fill=red!80,inner sep=1.8pt},
    level1/.style={circle,draw=black,fill=blue!70,inner sep=1.5pt},
    level2/.style={circle,draw=black,fill=teal!60,inner sep=1.2pt},
    verticalEdge/.style={gray!75,thick},
    horizontalEdge/.style={black,thick},
    active/.style={circle,draw=red!80!black,fill=red!70,inner sep=2.5pt},
    prob/.style={
        font=\scriptsize,
        text=red!80!black,
        fill=white,
        inner sep=1pt,
        rounded corners=1pt,
        anchor=south east
    }
]

\def\Side{3.6}
\def\Slant{0.7}

\def\Yzero{5.5}
\def\Yone{0.8}
\def\Ytwo{-3.9}

\newcommand{\squaregrid}[5]{%
    \pgfmathsetmacro{\half}{\Side/2}
    \pgfmathsetmacro{\step}{\Side/(#2-1)}

    \foreach \r in {1,...,#2}{
        \foreach \c in {1,...,#2}{
            \pgfmathtruncatemacro{\k}{\c+(\r-1)*#2}
            \pgfmathsetmacro{\x}{#3-\half+(\c-1)*\step}
            \pgfmathsetmacro{\y}{#4+\half-(\r-1)*\step}
            \coordinate (c#1-\k) at (\x,\y);
        }
    }

    \foreach \r in {1,...,#2}{
        \pgfmathtruncatemacro{\a}{1+(\r-1)*#2}
        \pgfmathtruncatemacro{\b}{#2+(\r-1)*#2}
        \draw[horizontalEdge] (c#1-\a)--(c#1-\b);
    }

    \foreach \c in {1,...,#2}{
        \pgfmathtruncatemacro{\b}{\c+(#2-1)*#2}
        \draw[horizontalEdge] (c#1-\c)--(c#1-\b);
    }

    \pgfmathtruncatemacro{\N}{#2*#2}
    \foreach \k in {1,...,\N}{
        \node (#1-\k) at (c#1-\k) [#5] {};
    }
}

\newcommand{\refine}[4]{%
    \foreach \r in {1,...,#2}{
        \foreach \c in {1,...,#2}{
            \pgfmathtruncatemacro{\a}{\c+(\r-1)*#2}
            \pgfmathtruncatemacro{\b}{2*\c-1+(2*\r-2)*#4}
            \draw[verticalEdge] (#1-\a)--(#3-\b);
        }
    }
}

\newcommand{\massat}[2]{%
    \node[active] at (#1) {};
    \path (#1) ++(-0.18,0.18) node[prob] {$#2$};
}

\pgfmathsetmacro{\Xzero}{-1.2}
\pgfmathsetmacro{\Xone}{\Xzero+\Slant}
\pgfmathsetmacro{\Xtwo}{\Xzero+2*\Slant}
\pgfmathsetmacro{\half}{\Side/2}

\squaregrid{L0}{2}{\Xzero}{\Yzero}{level0}
\squaregrid{L1}{3}{\Xone}{\Yone}{level1}
\squaregrid{L2}{5}{\Xtwo}{\Ytwo}{level2}

\refine{L0}{2}{L1}{3}
\refine{L1}{3}{L2}{5}

\massat{L2-17}{1}

\massat{L1-4}{\frac14}
\massat{L1-5}{\frac14}
\massat{L1-7}{\frac14}
\massat{L1-8}{\frac14}

\massat{L0-1}{\frac{3}{16}}
\massat{L0-2}{\frac{1}{16}}
\massat{L0-3}{\frac{9}{16}}
\massat{L0-4}{\frac{3}{16}}

\node[anchor=east] at (\Xtwo-\half-1.0,\Ytwo-0.9) {$\delta(u)$};
\node[anchor=east] at (\Xone-\half-1.0,\Yone-0.6) {$P_2(\delta(u))$};
\node[anchor=east] at (\Xzero-\half-1.0,\Yzero) {$P_2^2(\delta(u))$};

\end{tikzpicture}

\caption{The Markov operator $P_2$ acting once $P_2(\delta(u))$ and twice $P_2^2(\delta(u))$ on a Dirac measure $\delta(u)$.}
\label{fig:Markov-vertex}
\end{figure}

In the next theorem, we will see that the Markov operator of Example~\ref{ex:Euclidean} is admissible and Wasserstein $\frac12$-Contractive. One can imagine different ways of constructing admissible Markov operators on bounded degree hyperbolic graphs -- for example one could fix a basepoint and flow randomly (or even deterministically) along some geodesics towards that basepoint. The critical feature of the operator of Example~\ref{ex:Euclidean} is Wasserstein $\frac12$-Contractivity. This is a very delicate feature that is afforded to us by the precise combinatorial structure of the Euclidean dyadic hyperbolic filling.

\begin{theorem} \label{thm:admissible}
For every $m\in\N$, the Markov operator $P_m: \M_{00}(\Dyd^m) \to \M_{00}(\Dyd^m)$ of Example~\ref{ex:Euclidean} is admissible and Wasserstein $\frac12$-Contractive.
\end{theorem}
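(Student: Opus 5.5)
The plan is to verify the five defining properties directly from the product structure of $P_m$, reducing as much as possible to the one-dimensional operator $P$ on $\Dyd_n$.

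\emph{Gradation.} Take the grading $X_n := \{n\}\times\Dyd_n^m$ for $n\geq -1$. The required inclusion $P_m(\M_{00}(X_n))\sbs\M_{00}(X_{n-1})$ was already observed at the end of Example~\ref{ex:Euclidean}, and $*$ is absorbing by definition.

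\emph{Locality.} For $\{n\}\times u$ with $n\geq 1$, $P_m\delta(\{n\}\times u)$ is supported on points $\{n-1\}\times w$, where each coordinate satisfies $w_i\in\{u_i,u_i\pm 2^{-n}\}$. Each such $w$ is a vertex of the $(n-1)$-grid lying within one $n$-cube of $u$. I would exhibit an explicit path: the vertical edge from $\{n\}\times w$ to $\{n-1\}\times w$ (note $w\in \Dyd_n^m$), preceded by at most $m$ horizontal steps from $u$ to $w$ in $\Dyd_n^m$. This gives $C_2=m+1$. Levels $0$ and $-1$ map to $*$ at distance at most $1$.

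\emph{Morse property.} Here I use the tensor structure, $P_m^k\delta(u)=\bigotimes_i P^k\delta(u_i)$ (shifting levels). So it suffices to show that in one dimension $\supp P^k\delta(x)$ has at most $2$ points. By induction, the support of $P^k\delta(x)$ is either a single point of $\Dyd_{n-k}$ or two adjacent points $a,a+2^{-(n-k)}$ of $\Dyd_{n-k}$. Applying $P$ to two adjacent points of $\Dyd_j$ yields points in the closed interval between them, and these lie in $\Dyd_{j-1}$. An interval of length $2^{-j}$ contains at most two points of $\Dyd_{j-1}$, and if it contains two they are adjacent. Hence $C_1=2^m$, and after reaching level $0$ the support is just $\{*\}$.

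\emph{Wasserstein $\tfrac12$-contractivity.} Use the partition $E_v\sqcup E_h$ of Definition~\ref{def:Euclidean}. Vertical edges $\{n\}\times u\to\{n-1\}\times u$ satisfy $P_m\delta(\{n\}\times u)=\delta(\{n-1\}\times u)$ because $u\in\Dyd_{n-1}^m$ forces every factor to be a Dirac mass. Edges $x\to *$ are likewise fine. A horizontal edge changes one coordinate, say the first, from $x_1$ to $x_1+2^{-n}$, with the other coordinates $u'$ fixed. Then
\[
P_m(\delta(y_h)-\delta(x_h)) = \bigl(P\delta(x_1+2^{-n})-P\delta(x_1)\bigr)\otimes \textstyle\bigotimes_{i\ge2}P\delta(u_i).
\]
In one dimension exactly one of $x_1,x_1+2^{-n}$ lies in $\Dyd_{n-1}$. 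In either case, the difference equals $\tfrac12(\delta(b)-\delta(a))$, where $a\to b$ is the edge of $\Dyd_{n-1}$ containing the original edge. The second factor is a probability measure $\sum_k c_k\delta(w^k)$, whose support has at most $2^{m-1}$ points. This yields $\tfrac12\sum_k c_k(\delta(b,w^k)-\delta(a,w^k))$, whose edges are horizontal edges of $\Dyd^m_{n-1}$ that are pairwise disjoint because the $w^k$ are distinct. At level $0$ both endpoints map to $*$, giving $0$. Horizontal edges never touch $*$.

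The main obstacle is bookkeeping rather than depth. The one-dimensional support claim in the Morse step, and checking that the $\tfrac12$ factor is exact in both parity cases, are where care is needed. I would check those by writing $x_1=j2^{-n}$ and splitting on the parity of $j$.
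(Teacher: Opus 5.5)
Your Gradation, Locality ($C_2=m+1$) and Wasserstein $\tfrac12$-Contractivity arguments are correct and essentially the paper's. You even make explicit two points the paper passes over quickly. Both parity cases give exactly $\tfrac12(\delta(b)-\delta(a))$, where the paper only says ``without loss of generality.'' The edges $(a,w^k)\to(b,w^k)$ are pairwise disjoint because the $w^k$ are distinct.

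The genuine error is in the inductive step of your one-dimensional Morse argument. You claim that applying $P$ to two adjacent points of $\Dyd_j$ produces points in the closed interval between them, and this is false. Take the adjacent points $0,\tfrac12\in\Dyd_1$: then $P\delta(\tfrac12)=\tfrac12\delta(0)+\tfrac12\delta(1)$, and $1\notin[0,\tfrac12]$. In general, a point of $\Dyd_j\setminus\Dyd_{j-1}$ sends mass to both of its neighbors, and one of them lies outside the hull of the current support. The counting step that follows reflects this mistaken picture rather than a typo. A closed interval of length $2^{-j}$ meets $\Dyd_{j-1}$ (spacing $2^{-j+1}$) in at most one point. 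So your argument, taken literally, would show that a two-point support always collapses to a single point at the next step. That contradicts $P\delta(\tfrac14)=\tfrac12\delta(0)+\tfrac12\delta(\tfrac12)$ being followed by $P^2\delta(\tfrac14)=\tfrac34\delta(0)+\tfrac14\delta(1)$ in $\Dyd_2$.

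The invariant you state, that the support is one point or two adjacent points of $\Dyd_{n-k}$, is nevertheless true, and the repair is short. The $\Dyd_j$-edge $[a,a+2^{-j}]$ lies in a unique $\Dyd_{j-1}$-edge $[b,b+2^{-j+1}]$ with $b\in\{a,a-2^{-j}\}$. The map $P$ sends $\delta$ of each of $b$, $b+2^{-j}$, $b+2^{-j+1}$ into $\M_{00}(\{b,b+2^{-j+1}\})$. By induction, $\supp P^k\delta(x)$ is then contained in the endpoint set of the $\Dyd_{n-k}$-edge containing a fixed $\Dyd_n$-edge through $x$.

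Alternatively, follow the paper. By induction, $\supp P^k\delta(x)\sbs[x-2^{-n}(2^k-1),x+2^{-n}(2^k-1)]\cap\Dyd_{n-k}$. This interval has length strictly less than twice the spacing $2^{k-n}$ of $\Dyd_{n-k}$, so it contains at most two points. With either fix, the product bound $C_1=2^m$ follows exactly as you wrote.
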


\begin{proof}
Let $m\in\N$.

\fbox{Gradation}. Gradation is clear from the construction: the grading is the natural one $\Dyd^m = \cup_{n\geq -1}\{n\}\times \Dyd_n^m$. The set $\{-1\}\times \Dyd_{-1}^m = \{*\}$ is an absorbing state by definition. It has already been observed that $P_m(\M_{00}(\{n\}\times \Dyd_n^m)) \sbs \M_{00}(\{n-1\}\times \Dyd_{n-1}^m)$ for every $n\geq 0$.

\fbox{Morse Property}. Let $n\geq 0$ and $x \in \Dyd_n$. Letting $0 \leq k \leq n$, we have by induction on $k$ that
\begin{align} \label{eq:Morse}
\nonumber \supp P^k(\delta(x)) &\sbs \left[x-\sum_{i=0}^{k-1} 2^{-n+i},x+\sum_{i=0}^{k-1} 2^{-n+i}\right] \cap \Dyd_{n-k} \\
    &= \left[x-2^{-n}(2^k-1),x+2^{-n}(2^k-1)\right] \cap \Dyd_{n-k}.
\end{align}
Now, the length of the interval $\left[x-2^{-n}(2^k-1),x+2^{-n}(2^k-1)\right]$ is strictly less than $2^{k+1-n}$, while the distance between consecutive points in $\Dyd_{n-k}$, when thought of as a metric subset of $[0,1]$, is $2^{k-n}$. Therefore, the intersection
$$\left[x-2^{-n}(2^k-1),x+2^{-n}(2^k-1)\right]~\cap~\Dyd_{n-k}$$
can contain at most 2 points, and hence the containment \eqref{eq:Morse} shows that
\begin{equation} \label{eq:|supp|<=2}
    |\supp P^k(\delta(x))| \leq 2.
\end{equation}

Now suppose that $\{n\}\times (x_1,x_2,\dots x_m) \in \cup_{n\geq 0} \{n\}\times \Dyd_n^m$. Then for $0 \leq k \leq n$, we have
\begin{align*}
    \supp &P_m^k(\delta(n)\otimes\delta(x_1,x_2,\dots x_m)) \\
    &= \{n-k\}\times\supp P^k(\delta(x_1)) \otimes P^k(\delta(x_2)) \otimes \dots P^k(\delta(x_m)) \\
    &\sbs \{n-k\}\times \supp P^k(\delta(x_1)) \times \supp P^k(\delta(x_2)) \times \dots \supp P^k(\delta(x_m)).
\end{align*}
By \eqref{eq:|supp|<=2}, this establishes the bound $|\supp P_m^k(\delta(n)\otimes\delta(x_1,x_2,\dots x_m))| \leq 2^m$, proving the Morse Property with $C_1 = 2^m$.

\fbox{Locality}. Let $\{n\}\times x = \{n\}\times (x_1,x_2,\dots x_m) \in \cup_{n\geq -1} \{n\}\times \Dyd_n^m$, and let $y \in \supp P_m(\delta(n)~\otimes~\delta(x))$. We will see that $d(\{n\}\times x, y) \leq m+1$.

First, the case $n=-1$ is trivial because $\{n\}\times x = y = *$. Second, the case $n=0$ is also trivial because $y = *$ and there is a vertical edge between $\{0\}\times x$ and $*$, showing that $d_{\Dyd^m}(\{n\}\times x, y) = 1$. Assume, then, that $n\geq 1$. Then $y$ is of the form $\{n-1\}\times (x_1+\eps_1, x_2+\eps_2,\dots x_m+\eps_m)$, where $\eps_i\in\{-2^{-n},0,2^{-n}\}$. Then
\begin{align*}
    d(\{n\}\times x, y) = |\{i\in\{1,2,\dots m\}: \eps_i \neq 0\}|+1 \leq m+1.
\end{align*}
This shows that Locality holds with $C_2=m+1$.

\fbox{Wasserstein $\frac{1}{2}$-Contractivity}.
The edge set $E$ of $\Dyd^m$ is by definition partitioned into vertical edges $E_v$ and horizontal edges $E_h$. Let $(x_v \to y_v)\in E_{v}$. Then $x_v = \{n\}\times(x_1,x_2,\dots x_m)$ for some $n\geq 0$ and $(x_1,x_2,\dots x_m)\in \Dyd_n^m$, and either $n=0$ and $y_v = *$, or $n \geq 1$ and $y_v = \{n-1\}\times(x_1,x_2,\dots x_m) \in \{n-1\}\times \Dyd_{n-1}^m$. In the first case, we have
\begin{equation*}
    P_m(\delta(x_v)) = \delta(*) = \delta(y_v),
\end{equation*}
and in the second case, we have
\begin{align*}
    P_m(\delta(x_v)) &= \delta(n-1)\otimes P(\delta(x_1))\otimes P(\delta(x_2))\otimes\dots P(\delta(x_m)) \\
    &= \delta(n-1)\otimes\delta(x_1)\otimes\delta(x_2)\otimes\dots\delta(x_m) \\
    &= \delta(y_v).
\end{align*}
This shows that the vertical edge set axiom is satisfied for $E_v$.

Now let $(x_h \to y_h) \in E_h$. Then $x_h = \{n\}\times x$ and $y_h = \{n\}\times y$ for some $n \geq 0$ and some edge $(x \to y) = ((x_1,x_2,\dots x_m) \to (y_1,y_2, \dots y_m))$ of $\Dyd_n^m$. Then, by definition of the product graph $\Dyd_n^m = \Dyd_n \square \Dyd_n \square \dots \Dyd_n$, there exists $i \in \{1,2,\dots m\}$ such that $x_j = y_j$ for all $j\in\{1,2,\dots m\}\setminus\{i\}$ and $y_i-x_i = 2^{-n}$. If $n = 0$, we have
\begin{align*}
    P_m(\delta(y_h) - \delta(x_h)) = \delta(*) -\delta(*) = 0.
\end{align*}
Otherwise, for $n\geq 1$, we may assume without loss of generality that $y_i \in \Dyd_{n-1}$ and $x_i \in \Dyd_{n}\setminus \Dyd_{n-1}$, and therefore 
\begin{align} \label{eq:horizontaledge}
\nonumber P(\delta(y_i))-P(\delta(x_i)) &= \delta(y_i) - \tfrac12\delta(x_i-2^{-n})-\tfrac12\delta(x_i+2^{-n}) \\
    &=\tfrac12\delta(x_i+2^{-n})-\tfrac12\delta(x_i-2^{-n}).
\end{align}
Then we have
\begin{align*}
    P_m(\delta&(y_h) - \delta(x_h)) \\
    =& \delta(n-1)\otimes P(\delta(y_1))\otimes \dots P(\delta(y_{i-1})) \otimes P(\delta(y_{i})) \otimes P(\delta(y_{i+1})) \otimes \dots P(\delta(y_{m})) \\
    &- \delta(n-1)\otimes P(\delta(x_1))\otimes \dots P(\delta(x_{i-1})) \otimes P(\delta(x_{i})) \otimes P(\delta(x_{i+1})) \otimes \dots P(\delta(x_{m})) \\
    \overset{\eqref{eq:horizontaledge}}{=}& \tfrac12\delta(n-1)\otimes P(\delta(x_1))\otimes \dots P(\delta(x_{i-1})) \otimes \delta(x_{i}+2^{-n}) \otimes P(\delta(x_{i+1})) \otimes \dots P(\delta(x_{m})) \\
    &- \tfrac12\delta(n-1)\otimes P(\delta(x_1))\otimes \dots P(\delta(x_{i-1})) \otimes \delta(x_{i}-2^{-n}) \otimes P(\delta(x_{i+1})) \otimes \dots P(\delta(x_{m})).
\end{align*}
Then, choosing finitely many vertices $\{(z_1^k,\dots z_{i-1}^k,z_{i+1}^k,\dots z_m^k)\}_k \sbs \Pi_{j\in\{1,\dots i-1,i+1,\dots m\}} \Dyd_{n-1}$ and convex coefficients $\{c_k\}_k\sbs[0,1]$ such that
\begin{equation*}
    P(\delta(x_1))\otimes \dots P(\delta(x_{i-1})) \otimes P(\delta(x_{i+1})) \otimes \dots P(\delta(x_{m})) = \sum_k c_k \delta(z_1^k,\dots z_{i-1}^k, z_{i+1}^k, \dots z_{m}^k)
\end{equation*}
(which we can find since $P$ is Markov), the two identities above show us that
\begin{align*}
    P_m(\delta(y_h) - \delta(x_h)) = \tfrac12\sum_k c_k(\delta(y_h^k)-\delta(x_h^k)),
\end{align*}
where
\begin{align*}
    x_h^k = \{n-1\}\times(z_1^k,\dots z_{i-1}^k,x_i-2^{-n}, z_{i+1}^k, \dots z_{m}^k), \\
    y_h^k = \{n-1\}\times(z_1^k,\dots z_{i-1}^k,x_i+2^{-n}, z_{i+1}^k, \dots z_{m}^k).
\end{align*}
since $x_h^k \to y_h^k$ is by definition an edge of $\{n-1\}\times \Dyd_{n-1}^m$, it is in $E_h$. Hence, the horizontal edge set properties are satisfied for $E_h$. This proves admissibility and Wasserstein $\frac12$-Contractivity of $P_m$. See Figure~\ref{fig:Markov-edge} for an illustration of a decomposition $P_m(\delta(y_h) - \delta(x_h)) = \tfrac12\sum_k c_k(\delta(y_h^k)-\delta(x_h^k))$ (denoted $\overline{P}(u\to v)$ in that figure) for a particular horizontal edge $x_h\to y_h = u \to v$ in the case $m=2$.
\end{proof}

\begin{definition}[Admissible Subgraphs]
Suppose $P$ is an admissible Markov operator on the connected, directed, bounded degree graph $(X,E)$ with basepoint $*$, grading $X = \sqcup_{n\geq-1} X_n$, and edge set partition $E = E_v \sqcup E_h$. Suppose that $Y\sbs X$ is an induced subgraph containing $*$ with its intrinsic shortest path metric $d_Y$. We say that $Y$ is an {\it admissible subgraph} of $X$, with respect to $P$, if
\begin{itemize}
    \item \fbox{Closure and Locality under $P$} there exists a constant $C_2'<\infty$ such that, whenever $x\in Y$ and $y \in \supp P(\delta(x))$, it holds that $y \in Y$ and $d_Y(x,y) \leq C_2'$.
\end{itemize}
In this case, it is easy to see that the restriction of $P$ to $Y$ is an admissible Markov operator on $Y$, and if $P$ is Wasserstein $\frac12$-Contractive then so is the restriction. Indeed, $P$ maps $\M_{00}(Y)$ to $\M_{00}(Y)$ by the item above, so the restriction is well-defined. The partition $Y = \sqcup_{n\geq -1} Y_n := \sqcup_{n\geq -1} Y \cap X_n$ satisfies the Gradation property, the Morse Property holds for $Y$ with the same constant $C_1$ that it does for $X$, Locality holds with constant $C_2'$ for $Y$ by the item above, and Wasserstein $\frac12$-Contractivity holds for the partition $(E_v \cap (Y\times Y)) \sqcup (E_h \cap (Y\times Y))$ of the edge set of $Y$.
\end{definition}

\begin{proposition} \label{prop:admissible}
Let $m\in\N$ and let $P_m$ be the admissible Markov operator on $\Dyd^m$ of Theorem~\ref{thm:admissible}. Let $Z \sbs [0,1]^m$ be compact and nonempty, and let $\Hyp(Z) \sbs \Dyd^m$ be the induced subgraph of Definition~\ref{def:subEuclidean}. Then $\Hyp(Z) \sbs \Dyd^m$ is an admissible subgraph of $\Dyd^m$ with respect to $P_m$. Moreover, if $Z$ has sharp Assouad dimension $\leq r$ for some $r\in[0,m]$, then the restriction of $P_m$ to $\Hyp(Z)$ has dyadic growth rate $\leq r$.
\end{proposition}

\begin{proof}
Let $x \in \Hyp(Z)$ and $y\in\supp P_m(\delta(x))$. If $y = *$, then $y \in \Hyp(Z)$ and $d_{\Hyp(Z)}(x,y) \leq 1$, verifying Closure and Locality under $P_m$ in this case. Assume, then, that $y \neq *$. Then there exists $n\geq 1$, $Q \in \D_{n}(Z)$, and $u \in Q^{(0)}$ such that $x = \{n\}\times u$. Since $y\in\supp P_m(\delta(x))$, it is easy to see that there exists $v \in \hat{Q}^{(0)}$ such that $y = \{n-1\}\times v$, where $\hat{Q} \in \D_{n-1}(Z)$ is the parent of $Q$. This shows that $y \in \Hyp(Z)$, verifying Closure under $P_m$. Furthermore, we obviously have that $\|u-v\| \leq 2^{-n+1}$ since the $\ell^\infty$-diameter of $\hat{Q}$ is $2^{-n+1}$, and thus Locality follows from Lemma~\ref{lem:Hyp(Z)distance}. This shows that $\Hyp(Z)$ is an admissible subgraph with respect to $P_m$.

To prove the last sentence, assume that $Z$ has sharp Assouad dimension $\leq r$ for some $r\in[0,m]$. Then by Lemma~\ref{lem:Assouad->dyadicgrowthrate}, the hyperbolic filling $\Hyp(Z)$ has dyadic growth rate $\leq r$ (in the sense of Definition~\ref{def:dyadicgrowthrate}). One can see from the definition of $P_m$ that if $x,y \in \Hyp(Z)$ and $n\geq 0$ with $x \in \supp P_m^n(\delta(y))$, then $y$ lies below $x$ (in the sense of Definition~\ref{def:dyadicgrowthrate}). From this, it is easy to see that $\Hyp(Z)$ having dyadic growth rate $\leq r$ (in the sense of Definition~\ref{def:dyadicgrowthrate}) implies that $P_m$ restricted to $\Hyp(Z)$ has dyadic growth rate $\leq r$ (in the sense of Definition~\ref{def:admissible}).
\end{proof}

\begin{definition}[Graph $K$-Leaf Extension]
\label{def:graphextension}
Suppose $P$ is an admissible Markov operator on the connected, directed, bounded degree graph $X$ with basepoint $*$ and grading $X = \sqcup_{n\geq-1} X_n$. Fix $K\in\N$, we form the {\it $K$-leaf extension} $\tilde{X}$ of $X$ by attaching $K$ leaves to each vertex of $X$ via a vertical edge.

Specifically, the graded vertex set is $\tilde{X} = \sqcup_{n\geq -1} \tilde{X}_n$ where $\tilde{X}_{-1} := X_{-1} = \{*\}$ and for $n\geq 0$,
$$\tilde{X}_n := X_n \sqcup \left(\{1,2,\dots K\} \times X_{n-1}\right).$$
Obviously, $X$ is naturally identified as a graded subset of $\tilde{X}$.

There are no new horizontal edges in $\tilde{X}$; all horizontal edges of $\tilde{X}$ are those that are already horizontal edges of $X$. The vertical edges of $X$ remain vertical edges of $\tilde{X}$, and we add to $\tilde{X}$ a new vertical edge $(i,x) \to x$ for every $n\geq 0$, $x \in X_{n-1}$, and $i\in\{1,2,\dots K\}$.

We define the Markov operator $\tilde{P}: \M_{00}(\tilde{X}) \to \M_{00}(\tilde{X})$ by
\begin{equation*}
    \tilde{P}(\delta(\tilde{x})) = \begin{cases}
        P(\delta(x)) & \tilde{x} = x \in X \\
        \delta(x) & \tilde{x} = (i,x) \in \tilde{X}\setminus X.
    \end{cases}
\end{equation*}
\end{definition}

The following proposition is immediate, and we leave the verification of details to the reader.

\begin{proposition} \label{prop:graphextension}
Suppose $P$ is an admissible Markov operator on the connected, directed, bounded degree graph $X$. Fix $K\in\N$, and let $\tilde{X}$ denote the $K$-leaf extension of $X$ and $\tilde{P}: \M_{00}(\tilde{X}) \to \M_{00}(\tilde{X})$ the linear operator of Definition~\ref{def:graphextension}. Then the following are true.
\begin{enumerate}
    \item $\tilde{X}$ is a connected, directed, bounded degree graph that is isometric to the metric $K$-leaf extension of $X$ of Definition~\ref{def:metricextension}.
    \item $\tilde{P}$ is an admissible Markov operator on $\tilde{X}$.
    \item If $P$ is Wasserstein $\frac12$-Contractive, then so is $\tilde{P}$.
    \item For any $r\in[0,\infty)$, if $P$ has dyadic growth rate $\leq r$, then $\tilde{P}$ has dyadic growth rate $\leq r$ (with constant increasing by a factor of at most $K$).
\end{enumerate}
\end{proposition}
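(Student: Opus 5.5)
The plan is to verify the four items directly from the definitions, one at a time, writing $\tilde{X}_{-1}=\{*\}$ and $\tilde{X}_n = X_n \sqcup (\{1,\dots,K\}\times X_{n-1})$ for the grading.

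\emph{Item (1).} Each leaf $(i,x)$ has exactly one edge, namely the one to $x$. Hence any shortest path between two vertices of $X$ never enters a leaf, so it stays in $X$ and distances in $X$ are preserved. A path starting or ending at a leaf must first (or last) traverse its unique edge. This gives exactly the additive constants $+1$ and $+2$ of Definition~\ref{def:metricextension}. The map $(i,x)\mapsto(i,x)$ is therefore an isometry onto the metric $K$-leaf extension. For leaves of the same point $x$ the distance is $2$, which agrees with $d(x,x)+2$. Connectedness is clear, and the maximal degree increases by at most $K$.

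\emph{Item (2).} Each of the three admissibility properties is checked in turn.
\begin{itemize}
\item Gradation: $\tilde{P}$ maps $\delta(x)$ with $x\in X_n$ into $\M_{00}(X_{n-1})\sbs\M_{00}(\tilde{X}_{n-1})$. It maps $\delta((i,x))$ with $x\in X_{n-1}$ to $\delta(x)\in\M_{00}(\tilde{X}_{n-1})$. The basepoint $*$ remains absorbing.
\item Morse Property: for $x\in X$ we have $\tilde{P}^n\delta(x)=P^n\delta(x)$. For a leaf, $\tilde{P}^n\delta((i,x))=P^{n-1}\delta(x)$ when $n\ge1$, and $\delta((i,x))$ when $n=0$. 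So the constant $C_1$ still works (use $\max(C_1,1)$).
\item Locality: the constant becomes $\max(C_2,1)$.
\end{itemize}
Finally, $\tilde{P}$ is Markov because it sends Dirac masses to probability measures.

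\emph{Item (3).} Keep the partition $\tilde{E}_h = E_h$ and let $\tilde{E}_v = E_v \cup \{(i,x)\to x\}$. The new vertical edges satisfy $(i,x)\neq *$ and $\tilde{P}\delta((i,x))=\delta(x)$. The old vertical and horizontal edges lie in $X$, where $\tilde{P}$ agrees with $P$. The decompositions supplied by Wasserstein $\frac12$-contractivity of $P$ use only horizontal edges of $X$, and these remain horizontal in $\tilde{X}$.

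\emph{Item (4).} This is the only point requiring a count. Fix $\tilde{x}\in\tilde{X}$ and $n\ge0$, and count the points $\tilde{y}$ with $\tilde{x}\in\supp\tilde{P}^n\delta(\tilde{y})$.
\begin{itemize}
\item If $\tilde{x}$ is a leaf, then $\tilde{x}\notin\supp \tilde{P}^n\delta(\tilde{y})$ whenever $n\ge1$, because leaves are never in the image of $\tilde P$. So only $\tilde{y}=\tilde{x}$ can occur (and only when $n=0$), giving the bound $1$.
\item If $\tilde{x}=x\in X$, the admissible $\tilde{y}$ are of two kinds. First, points $y\in X$ with $x\in\supp P^n\delta(y)$; there are at most $C_3 2^{nr}$ of these. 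Second, leaves $(i,y)$ with $x\in\supp P^{n-1}\delta(y)$; there are at most $K C_3 2^{(n-1)r}\le K C_3 2^{nr}$ of these.
\end{itemize}
The total is therefore at most $(K+1)C_3 2^{nr}$. This is the stated bound, up to the harmless constant: since $K\ge1$, we have $K+1\le 2K$.

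None of the steps is a real obstacle. The only care needed is in item (4), where the leaf contribution is indexed by $n-1$ rather than $n$ and $r\ge0$ must be used to absorb the shift.
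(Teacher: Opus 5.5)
Your verification is correct and is exactly the routine check the paper intends; the paper gives no proof and leaves the details to the reader. Your factor $K+1$ in item (4) is in fact the right one and cannot be improved to $K$ in general: for $X=\{*\}$ one can take $C_3=1$, yet for every $n\ge 1$ the point $*$ lies in $\supp\tilde P^n\delta(\tilde y)$ for all $K+1$ points $\tilde y\in\tilde X$. So the statement's ``factor of at most $K$'' should read $K+1$, which is immaterial for how the proposition is used later.
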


\section{Actions on $\ell^p$}
\label{sec:lpactions}

Let $P$ be an admissible Markov operator and a connected, directed, bounded degree graph $X$. Let $\M_{*}(X) \sbs \M_{00}(X)$ denote the subspace consisting of all $\mu$ with $\mu(\{*\}) = 0$, and let $\M_X(X) \sbs \M_{00}(X)$ denote the subspace consisting of all $\mu$ with $\mu(X) = 0$. For $p\in [1,\infty]$, we can equip $\M_{*}(X)$ with the $\ell^p$-norm $\|\mu\|_p = \left(\sum_{x\in X} |\mu(\{x\})|^p \right)^{1/p}$, and we denote the resulting normed space by $\M_{*}^p(X)$.

Let $I: \M_{00}(X)\to\M_{00}(X)$ denote the identity. Define the {\it Laplacian} $\Delta = I - P: \M_{*}(X) \to \M_X(X)$ and the {\it inverse Laplacian} $\Delta^{-1} = \sum_{n\geq 0} P^n\one_{X\setminus\{*\}}: \M_X(X) \to \M_{*}(X)$ (whenever $\mu\in\M_{00}(X)$ and $A \sbs X$, we use the notation $\mu\one_A$ to denote the measure $(\mu\one_A)(B) := \mu(A\cap B)$). The Gradation property and finite support of $\mu \in \M_X(X)$ mean that $P^n(\mu)\one_{X\setminus\{*\}}$ equals 0 for $n$ sufficiently large (depending on $\mu)$, and hence the sum in the definition of $\Delta^{-1}$ is finite. Let us quickly check that $\Delta^{-1} \circ \Delta = id_{\M_{*}(X)}$ and $\Delta \circ \Delta^{-1} = id_{\M_X(X)}$, using the obvious facts that $\{\delta(x)-\delta(*)\}_{x\in X\setminus\{*\}}$ spans $\M_{X}(X)$ and $\{\delta(x)\}_{x\in X\setminus\{*\}}$ spans $\M_{*}(X)$. Let $x \in X\setminus\{*\} = \sqcup_{n\geq 0} X_n$, and let $N \geq 0$ with $x \in X_N$. Then we have
\begin{align*}
    \Delta\Delta^{-1}(\delta(x)-\delta(*)) &= (I-P)\sum_{n\geq 0}P^n(\delta(x)-\delta(*))\one_{X\setminus\{*\}} \\
    &= (I-P)\sum_{n=0}^N P^n(\delta(x)) \\
    &= \delta(x) - P^{N+1}(\delta(x)) \\
    &= \delta(x) - \delta(*),
\end{align*}
and
\begin{align*}
    \Delta^{-1}\Delta\delta(x) &= \sum_{n\geq 0}P^n((I-P)\delta(x))\one_{X\setminus\{*\}} \\
    &= \sum_{n=0}^N P^n(\delta(x)) - \sum_{n=1}^{N}P^n(\delta(x)) \\
    &= \delta(x),
\end{align*}
as claimed.

Let $f: X \to X$ be a map. It induces a {\it pushforward} operator $f_\#: \M_{00}(X) \to \M_{00}(X)$ that is linear and satisfies $f_\#(\delta(x)) = \delta(f(x))$. The operator $f_\#$ preserves the subspace $\M_X(X)$.

The following lemma is the important technical estimate powering the main results of this section.

\begin{lemma} \label{lem:Lipschitz->bndd}
Let $P$ be an admissible, Wasserstein $\frac12$-Contractive Markov operator, with dyadic growth rate $\leq r$ for some $r\in[0,\infty)$, on a connected, directed, bounded degree graph $X$. Then for any $p < \frac{r}{r-1}$ and any $L<\infty$, there exists $C'<\infty$ such that for any $L$-Lipschitz map $f: X \to X$, we have the operator norm bound $\|\Delta^{-1}f_\#\Delta\|_{p\to p} \leq C'$.
\end{lemma}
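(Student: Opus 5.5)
The plan is to write $T := \Delta^{-1}f_\#\Delta$ as an explicit kernel on $X\setminus\{*\}$, split it as $T=\sum_{n\geq 0}T_n$ by the number of applications of $P$, and bound each $T_n$ by the Schur test. The two extra hypotheses play separate roles: Wasserstein $\frac12$-Contractivity gives geometric decay $2^{-n}$ of the entries of $T_n$, and Dyadic Growth Rate $\leq r$ limits how many columns can reach a given row. Balancing the two in the Schur test should converge exactly when $p<\frac{r}{r-1}$.

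\emph{Step 1 (reduction to edges).} For $x\in X\setminus\{*\}$ we have $f_\#\Delta\delta(x)=\sum_{y}P(x,y)\bigl(\delta(f(x))-\delta(f(y))\bigr)$. By Locality, each $y$ here satisfies $d_X(x,y)\leq C_2$, so $d_X(f(x),f(y))\leq LC_2$. Joining $f(y)$ to $f(x)$ by a geodesic, each difference telescopes into at most $LC_2$ signed edge differences $\pm(\delta(b_e)-\delta(a_e))$, over edges $a_e\to b_e$ lying within distance $LC_2$ of $f(x)$. By the Morse Property and bounded degree, $f_\#\Delta\delta(x)$ is therefore a combination, with total coefficient mass at most $LC_2$, of boundedly many edge differences. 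It then suffices to understand $\Delta^{-1}$ of a single edge difference.

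\emph{Step 2 (edge estimates).} For a vertical edge $a\to b$ we have $P\delta(a)=\delta(b)$, so $\delta(b)-\delta(a)=-\Delta\delta(a)$ and hence $\Delta^{-1}(\delta(b)-\delta(a))=-\delta(a)$. For a horizontal edge, iterating Wasserstein $\frac12$-Contractivity shows $P^n(\delta(b)-\delta(a))$ has total variation at most $2^{1-n}$. Its support lies in $\supp P^n\delta(a)\cup\supp P^n\delta(b)$, which has at most $2C_1$ points by the Morse Property. Consequently
\[
T\delta(x)=\sum_{n\geq0}\tau_n(x),\qquad \|\tau_n(x)\|_{1}\lesssim 2^{-n},\qquad \supp\tau_n(x)\subseteq \bigcup_{e}\supp P^n\delta(a_e)\cup\supp P^n\delta(b_e),
\]
where $e$ ranges over the boundedly many edges near $f(x)$ from Step 1. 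Let $T_n$ denote the operator $\delta(x)\mapsto\tau_n(x)$. Then every column of $T_n$ has $\ell^1$ mass $\lesssim 2^{-n}$ and every entry of $T_n$ is $\lesssim 2^{-n}$.

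\emph{Step 3 (rows and interpolation).} Fix $z$. An entry $T_n(z,x)$ is nonzero only if $z\in\supp P^n\delta(w)$ for some vertex $w$ within distance $LC_2+1$ of $f(x)$. By Dyadic Growth Rate $\leq r$ there are at most $C_32^{nr}$ such $w$, hence $\lesssim 2^{nr}$ points within bounded distance of them. Granting a bounded-multiplicity bound for pulling these back to $x$ (discussed below), the row sums of $|T_n|$ are $\lesssim 2^{-n}2^{nr}$. The Schur test then gives
\[
\|T_n\|_{p\to p}\leq \Bigl(\sup_x\sum_z|T_n(z,x)|\Bigr)^{1/p}\Bigl(\sup_z\sum_x|T_n(z,x)|\Bigr)^{1/p'} \lesssim 2^{-n/p}\,2^{n(r-1)/p'}=2^{-n(1-r/p')}.
\]
This is summable in $n$ precisely when $p'>r$, that is, $p<\frac{r}{r-1}$. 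Summing gives $\|T\|_{p\to p}\leq C'(L,p,r,C_1,C_2,C_3)$.

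\emph{Main obstacle.} The row count in Step 3 needs the number of $x$ with $f(x)$ in a fixed ball to be bounded. This is clear when $f$ has uniformly bounded fibers, which is the case in the intended application: a biLipschitz map onto a coarsely dense subset, or such a map composed with a Lipschitz retraction. For an arbitrary $L$-Lipschitz $f$, however, fibers can be infinite, so this is where I expect the real work to be. My first attempt would be the dual formulation. Let $h:=\sum_n (P^*)^n g$ be the potential of a test function $g\in\ell^{p'}$. Then $T^*g(x)=\sum_y P(x,y)\bigl(h(f(x))-h(f(y))\bigr)$, so $T^*g$ is controlled by the ``gradient'' of $h$ along the edges of Step 1. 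The aim would be to bound $\|T^*g\|_{p'}$ using edge differences of $h$, which decay level by level via contractivity, together with the grading to sort the $x$'s by the level of $f(x)$. This would replace a pointwise count of preimages with a count of edges. If this approach fails, I would add bounded fiber multiplicity as a harmless hypothesis, since it holds in every application made later in the paper.
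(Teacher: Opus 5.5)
Your Schur-test argument is sound, and the obstacle you isolate in your last paragraph is genuine and cannot be overcome. For $p>1$ the statement is false for general $L$-Lipschitz $f$, so the dual reformulation you propose must fail.

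Here is a counterexample. Take $X=\Dyd^m$ with $P_m$, which is admissible, Wasserstein $\frac12$-Contractive, and of dyadic growth rate $\leq m$ by Theorem~\ref{thm:admissible} and Proposition~\ref{prop:admissible}. Define $f(*)=*$ and $f(\{n\}\times u)=\{n\}\times 0$. Each edge is mapped to an edge or a point, so $f$ is $1$-Lipschitz. For $n\geq 1$ and $u\in\Dyd^m_{n-1}$ we have $P_m\delta(\{n\}\times u)=\delta(\{n-1\}\times u)$. Hence $f_\#\Delta\delta(\{n\}\times u)=\Delta\delta(\{n\}\times 0)$, and so $\Delta^{-1}f_\#\Delta\,\delta(\{n\}\times u)=\delta(\{n\}\times 0)$. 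Summing over the $N_n=(2^{n-1}+1)^m$ such $u$ gives a vector of $\ell^p$-norm $N_n^{1/p}$ whose image has norm $N_n$. Therefore $\|\Delta^{-1}f_\#\Delta\|_{p\to p}\geq N_n^{1-1/p}\to\infty$ for every $p>1$. With $m\geq 2$ and $r=m$ this contradicts the lemma for $1<p<\frac{m}{m-1}$.

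So a multiplicity hypothesis such as $\sup_v|f^{-1}(v)|<\infty$ must be added, and with it your Steps 1--3 form a complete proof. For $p=1$ your column bound alone suffices, with no fiber hypothesis. The added hypothesis costs nothing for the main theorem. There the retraction is a nearest-point projection onto a coarsely dense set (Theorem~\ref{thm:hyperbolic->Markov}), so $\pi$ and $\gamma\circ\pi$ have uniformly bounded fibers. Lemma~\ref{lem:complemented} and Theorem~\ref{thm:Markov->lp} should accordingly be read with such a retraction rather than an arbitrary one.

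The paper's proof has the same hidden requirement. It factors $\Delta^{-1}f_\#\Delta=(id\oplus\partial)\,\overline{\Delta^{-1}}\,\overline{f}\,\overline{\Delta}$ through finitely supported edge chains. Here $\overline{\Delta^{-1}}$ sends a vertical edge $x_v\to y_v$ to $-\delta(x_v)$ and a horizontal edge $e$ to $\sum_n\overline{P}^n e$, where $\overline{P}$ is the edge-level lift of $P$ supplied by Wasserstein $\frac12$-Contractivity. It interpolates $\|\overline{P}^n\|_{1\to1}\leq 2^{-n}$ against $\|\overline{P}^n\|_{\infty\to\infty}\leq C2^{n(r-1)}$ and concludes by a Neumann series. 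The bounded lift $\overline{f}$ with $f_\#\partial=\partial\overline{f}$ is asserted with details omitted. For the map above no bounded lift can exist, since it would bound $\Delta^{-1}f_\#\Delta$. For the natural path lift, $\|\overline{f}\|_{\infty\to\infty}$ counts the edges whose image paths cross a given edge, which is exactly your preimage count.

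As for the two routes, the ingredients and the decay rate $2^{-n(1-r/p')}$ coincide. The paper's factorization confines the one $f$-independent hard estimate to $\overline{P}$ and all dependence on $f$ to $\|\overline{f}\|$. Your direct Schur test on the composite vertex kernels $T_n$ is more elementary, and it makes the multiplicity requirement visible instead of burying it.
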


\begin{proof}
Let $p < \frac{r}{r-1}$ and $L<\infty$. Let $E$ denote the directed edge set of $X$ with partition $E = E_h \sqcup E_v$ witnessing Wasserstein $\frac{1}{2}$-Contractivity. We denote by $\ell_{00}^p(E)$ the free vector space over $E$ equipped with the $\ell^p$-norm $\|\sum_i c_i e_i\|_p := \left(\sum_i|c_i|^p\right)^{1/p}$. Define the boundary operator $\partial: \ell^p_{00}(E) \to \M_X(X)$ to be the unique linear map with $\partial(x\to y) = \delta(y)-\delta(x)$ whenever $(x\to y) \in E$ is a directed edge. Note that this map is bounded for every $p$. Let $f: X\to X$ be $L$-Lipschitz.

The strategy of the proof is as follows: we will construct auxiliary linear operators $\overline{\Delta},\overline{f},\overline{\Delta^{-1}}$ that fit into the following commutative diagram:

\begin{figure}[H]
\begin{tikzcd}[column sep=large, row sep=large]
&
\ell_{00}^p(E)
    \arrow[r, "\overline{f}"]
    \arrow[d, "\partial"]
&
    \ell_{00}^p(E_v) \oplus \ell_{00}^p(E_h)
    \arrow[r, "\overline{\Delta^{-1}}"]
    \arrow[d, "\partial"]
&
\M_*^p(X) \oplus \ell_{00}^p(E_h)
    \arrow[d, "id_{\M_*(X)} \oplus \partial"]
\\
\M_*^p(X)
    \arrow[ur, bend left=20, "\overline{\Delta}"]
    \arrow[r, "\Delta"]
&
\M_X(X)
    \arrow[r, "f_\#"]
&
\M_X(X)
    \arrow[r, "\Delta^{-1}"]
&
\M_*^p(X)
\end{tikzcd}
\caption{A commutative diagram involving auxiliary operators $\overline{\Delta},\overline{f},\overline{\Delta^{-1}}$ that are used to rewrite $\Delta^{-1}f_\#\Delta$ as $(id_{\M_*(X)} \oplus \partial)\overline{\Delta^{-1}} \, \overline{f} \, \overline{\Delta}$.}
\label{fig:cd}
\end{figure}

In particular, this diagram says that $\Delta^{-1}f_\#\Delta = (id_{\M_*(X)} \oplus \partial)\overline{\Delta^{-1}} \, \overline{f} \, \overline{\Delta}$. Then, to prove that $\|\Delta^{-1}f_\#\Delta\|_{p\to p} \leq C$ for some $C<\infty$ depending only on $L$, it suffices to prove that the $p\to p$ norms of the operators $\overline{\Delta},\overline{f},\overline{\Delta^{-1}}$ are finite, with $\|\overline{f}\|_{p\to p}$ depending only on $L$. We undertake these constructions in the remainder of the proof. \\

\noindent {\bf Construction of $\boldsymbol{\overline{\Delta}}$.}
By Locality of $P$ and the assumption that $X$ has bounded degree, it is simple to construct a bounded linear operator $\overline{\Delta}: \M_{*}^p(X) \to \ell^p_{00}(E)$ satisfying the commutativity $\Delta = \partial \overline{\Delta}$ of Figure~\ref{fig:cd}. Indeed, let us define $\overline{\Delta}$ on the basis elements $\{\delta(u)\}_{u \in X\setminus\{*\}} \sbs \M_*(X)$ and then extend by linearity. Let $u \in X\setminus\{*\}$. Since $P$ is Markov and satisfies the Locality property, there exist $N\in\N$ (independent of $u$) and a convex combination $\sum_i c_i\delta(u_i) \in \M_*(X)$ such that $P(\delta(u)) = \sum_i c_i\delta(u_i)$ and $\max_i d_X(u,u_i) \leq N$. This last condition implies that for each $i$, we can find a path of length $N$ from $u$ to $u_i$, i.e., a set of edges $\{e_i^j\}_{j=1}^N \sbs E$ such that $\partial \sum_{j=1}^N e_i^j = \delta(u)-\delta(u_i)$. Then we define
$$\overline{\Delta}(\delta(u)) := \sum_i c_i \sum_{j=1}^N e_i^j \in \ell_{00}^p(E).$$
We have by construction that $\Delta = \partial\overline{\Delta}$. Boundedness of $\overline{\Delta}$ follows from the fact that $X$ has bounded degree. \\

\noindent {\bf Construction of $\boldsymbol{\overline{f}}$.}
By Locality of $P$ and the assumption that $X$ has bounded degree, there is a constant $C<\infty$ such that for any $L$-Lipschitz $f: X \to X$, we can find a $C$-bounded linear operator $\overline{f}: \ell^p_{00}(E) \to \ell^p_{00}(E)$ satisfying the commutativity $f_\# \partial = \partial \overline{f}$ of Figure~\ref{fig:cd}. The construction is very similar to that of $\overline{\Delta}$ and we omit the details. \\

\noindent {\bf Construction of $\boldsymbol{\overline{\Delta^{-1}}}$.}
Now we want to find a bounded operator $\overline{\Delta^{-1}}: \ell^p_{00}(E) = \ell^p_{00}(E_v) \oplus \ell^p_{00}(E_h) \to \M_{*}^p(X) \oplus \ell^p_{00}(E_h)$ such that the commutativity
\begin{equation} \label{eq:Delta_inv_comm}
    \Delta^{-1} \partial = (id_{\M_{*}(X)}\oplus\partial) \overline{\Delta^{-1}}
\end{equation}
of Figure~\ref{fig:cd} is satisfied. Towards this end, we will find a bounded operator $\overline{P}: \ell^p_{00}(E_h)\to\ell^p_{00}(E_h)$ such that
\begin{equation} \label{eq:Pbar_comm}
    P\partial = \partial\overline{P}
\end{equation}
and
\begin{equation} \label{eq:contraction}
    \text{for some } 0\leq n_0 < \infty,\text{ the power }\overline{P}^{n_0} \text{ is a strict } p\to p \text{ contraction.}
\end{equation}
Once such a bounded operator $\overline{P}$ has been found, \eqref{eq:Pbar_comm} and \eqref{eq:Pbar_comm} will imply that the operator $\overline{\Delta^{-1}}:\ell^p_{00}(E_v) \oplus \ell^p_{00}(E_h) \to \M_{*}^p(X) \oplus \ell^p_{00}(E_h)$ defined by
\begin{equation*}
    \overline{\Delta^{-1}}(x_v \to y_v \, , \, x_h \to y_h) := (-\delta(x_v), \sum_{n\geq 0} \overline{P}^n(x_h \to y_h))
\end{equation*}
is a bounded linear operator satisfying the required commutativity \eqref{eq:Delta_inv_comm}. Indeed, the boundedness of $\overline{\Delta^{-1}}$ is clear from \eqref{eq:contraction}, and then given that \eqref{eq:Pbar_comm} holds, we have for any $(x_v \to y_v \, , \, x_h \to y_h) \in \ell^p_{00}(E_v) \oplus \ell^p_{00}(E_h) = \ell^p_{00}(E)$ that
\begin{align*}
    (id_{\M_{*}(X)}\oplus\partial) &\overline{\Delta^{-1}}(x_v \to y_v \, , \, x_h \to y_h) = (id_{\M_{*}(X)}\oplus\partial)(-\delta(x_v), \sum_{n\geq 0} \overline{P}^n(x_h \to y_h)) \\
    &= -\delta(x_v) + \partial\sum_{n\geq 0} \overline{P}^n(x_h \to y_h) \\
    &\overset{\eqref{eq:Pbar_comm}}{=} -\delta(x_v) + \sum_{n\geq 0} P^n\partial(x_h \to y_h) \\
    &= \left(\sum_{n\geq 0}P^n(P(\delta(x_v))-\delta(x_v)) + \sum_{n\geq 0} P^n\partial(x_h \to y_h)\right)\one_{X\setminus\{*\}} \\
    &= \left(\sum_{n\geq 0}P^n\partial(x_v \to y_v) + \sum_{n\geq 0} P^n\partial(x_h \to y_h)\right)\one_{X\setminus\{*\}} \\
    &= \Delta^{-1}\partial(x_v \to y_v,x_h \to y_h),
\end{align*}
verifying \eqref{eq:Delta_inv_comm}. We define the operator $\overline{P}$ in the next paragraph and verify that it is bounded and that \eqref{eq:Pbar_comm} and \eqref{eq:contraction}. We remark that the verification of \eqref{eq:contraction} is the most important technical estimate in the paper, and the only place where the conformal dimension $Q$ of $\partial\Gamma$ (eventually) enters the calculation of the permitted exponents $p$.

It is straightforward to define the operator $\overline{P}: \ell^p_{00}(E_h)\to\ell^p_{00}(E_h)$; by Wasserstein $\frac{1}{2}$-Contractivity, we can choose for any horizontal $(x_h \to y_h) \in E_h$ a finite set of horizontal edges $\{x^i_h \to y^i_h\}_i \sbs E_{h}$ and convex coefficients $\{c_i\}_i \sbs [0,1]$ such that $\{x^i_h,y^i_h\}_i$ is pairwise disjoint and $P(\delta(y_h)-\delta(x_h)) = 0$ or $P(\delta(y_h)-\delta(x_h)) = \frac12\sum_i c_i (\delta(y^i_h)-\delta(x^i_h))$. Then we define $\overline{P}(x_h \to y_h) := 0$ if $P(\delta(y_h)-\delta(x_h)) = 0$ and $\overline{P}(x_h \to y_h) := \frac12\sum_i c_i(x^i_h \to y^i_h)$ if $P(\delta(y_h)-\delta(x_h)) = \frac12\sum_i c_i (\delta(y^i_h)-\delta(x^i_h))$. See Figure~\ref{fig:Markov-edge} for an illustration of $\overline{P}$ when $X = \Dyd^2$.

\begin{figure}
\centering
\begin{tikzpicture}[
    scale=.6,
    level0/.style={circle,draw=black,fill=red!80,inner sep=1.8pt},
    level1/.style={circle,draw=black,fill=blue!70,inner sep=1.5pt},
    level2/.style={circle,draw=black,fill=teal!60,inner sep=1.2pt},
    verticalEdge/.style={gray!75,thick},
    horizontalEdge/.style={black,thick},
    activeEdge/.style={
        ->,
        >=stealth,
        red!80!black,
        line width=1.8pt
    },
    prob/.style={
        font=\scriptsize,
        text=red!80!black,
        fill=white,
        inner sep=1pt,
        rounded corners=1pt
    }
]

\def\Side{3.6}
\def\Slant{0.7}

\def\Yzero{5.5}
\def\Yone{0.8}
\def\Ytwo{-3.9}

\newcommand{\squaregrid}[5]{%
    \pgfmathsetmacro{\half}{\Side/2}
    \pgfmathsetmacro{\step}{\Side/(#2-1)}

    \foreach \r in {1,...,#2}{
        \foreach \c in {1,...,#2}{
            \pgfmathtruncatemacro{\k}{\c+(\r-1)*#2}
            \pgfmathsetmacro{\x}{#3-\half+(\c-1)*\step}
            \pgfmathsetmacro{\y}{#4+\half-(\r-1)*\step}
            \coordinate (c#1-\k) at (\x,\y);
        }
    }

    \foreach \r in {1,...,#2}{
        \pgfmathtruncatemacro{\a}{1+(\r-1)*#2}
        \pgfmathtruncatemacro{\b}{#2+(\r-1)*#2}
        \draw[horizontalEdge] (c#1-\a)--(c#1-\b);
    }

    \foreach \c in {1,...,#2}{
        \pgfmathtruncatemacro{\b}{\c+(#2-1)*#2}
        \draw[horizontalEdge] (c#1-\c)--(c#1-\b);
    }

    \pgfmathtruncatemacro{\N}{#2*#2}
    \foreach \k in {1,...,\N}{
        \node (#1-\k) at (c#1-\k) [#5] {};
    }
}

\newcommand{\refine}[4]{%
    \foreach \r in {1,...,#2}{
        \foreach \c in {1,...,#2}{
            \pgfmathtruncatemacro{\a}{\c+(\r-1)*#2}
            \pgfmathtruncatemacro{\b}{2*\c-1+(2*\r-2)*#4}
            \draw[verticalEdge] (#1-\a)--(#3-\b);
        }
    }
}

\newcommand{\edgemass}[3]{%
    \draw[activeEdge] (#1) -- (#2)
        node[midway,above=3pt,prob] {$#3$};
}

\pgfmathsetmacro{\Xzero}{-1.2}
\pgfmathsetmacro{\Xone}{\Xzero+\Slant}
\pgfmathsetmacro{\Xtwo}{\Xzero+2*\Slant}
\pgfmathsetmacro{\half}{\Side/2}

\squaregrid{L0}{2}{\Xzero}{\Yzero}{level0}
\squaregrid{L1}{3}{\Xone}{\Yone}{level1}
\squaregrid{L2}{5}{\Xtwo}{\Ytwo}{level2}

\refine{L0}{2}{L1}{3}
\refine{L1}{3}{L2}{5}

\edgemass{L2-17}{L2-18}{1}

\edgemass{L1-4}{L1-5}{\frac14}
\edgemass{L1-7}{L1-8}{\frac14}

\edgemass{L0-1}{L0-2}{\frac{1}{16}}
\edgemass{L0-3}{L0-4}{\frac{3}{16}}

\node[anchor=east]
    at (\Xtwo-\half-1.0,\Ytwo-0.9)
    {$u\to v$};

\node[anchor=east]
    at (\Xone-\half-1.0,\Yone-0.6)
    {$\overline{P}(u\to v)$};

\node[anchor=east]
    at (\Xzero-\half-1.0,\Yzero)
    {$\overline{P}^{\,2}(u\to v)$};

\end{tikzpicture}

\caption{The action of $\overline{P}$ and $\overline{P}^2$ on a horizontal edge $u\to v$ in the Euclidean dyadic hyperbolic filling of $[0,1]^2$.}
\label{fig:Markov-edge}
\end{figure}

The commutativity \eqref{eq:Pbar_comm} easily holds by construction. We also obviously have that the operator norm of $\overline{P}$ is $\frac{1}{2}$ when $p=1$. We will then compute the operator norm for $p=\infty$ and use interpolation to get an estimate for all $p\in[1,\infty]$. The bound for the $\infty\to\infty$ operator norm uses the dyadic growth rate $\leq r$ assumption, and it is the only place in this proof where this assumption is used.

It follows from the definition of dyadic growth rate $\leq r$ that there is a constant $C_{grw}<
\infty$ such that for all $n\geq 0$ and all $(x'_h \to y'_h) \in E_h$, there are at most $C_{grw}2^{nr}$ edges $(x_h \to y_h)\in E_h$ for which the $(x'_h \to y'_h)$-coefficient of $\overline{P}^n(x_h \to y_h)$ is nonzero. It follows from this that the operator norm of $\overline{P}^n$ when $p=\infty$ is $\leq 2^{-n}C_{grw}2^{nr} = C_{grw}2^{n(r-1)}$. Thus, by (Riesz-Thorin) interpolation, the operator norm of $\overline{P}^n$ for $p\in[1,\infty]$ is at most
\begin{align} \label{eq:Pbarbound}
    \|\overline{P}^n\|_{p\to p} \leq (2^{-n})^{\frac1p}(C_{grw}2^{n(r-1)})^{1-\frac1p} = C_{grw}^{1-\frac1p}2^{-n}2^{rn(1-\frac1p)}.
\end{align}
Recalling that $p < \frac{r}{r-1}$, we get that $1-\frac1p < \frac1r$. Then, setting $\eps = \frac1r - (1-\frac1p)>0$, we make this substitution into the bound \eqref{eq:Pbarbound} and obtain
\begin{align*}
    \|\overline{P}^n\|_{p\to p} \leq C_{grw}^{1-\frac1p}2^{-n}2^{rn(\frac1r-\eps)} = C_{grw}^{1-\frac1p}2^{-rn\eps}.
\end{align*}
Hence, $\|\overline{P}^n\|_{p\to p}<\infty$ for every $n\geq 0$, and if $n_0 > \frac{\log_2(C_{grw}^{1-\frac1p})}{r\eps}$, then $\|\overline{P}^{n_0}\|_{p\to p}<1$. This verifies \eqref{eq:contraction} and completes the proof.
\end{proof}

The following lemma and theorem are consequences of Lemma~\ref{lem:Lipschitz->bndd}.

\begin{lemma} \label{lem:complemented}
Let $P$ be an admissible, Wasserstein $\frac12$-Contractive Markov operator, with dyadic growth rate $\leq r$ for some $r\in[0,\infty)$, on a connected, directed, bounded degree graph $X$. Let $R \sbs X$ be a Lipschitz retract. Then for any $p < \frac{r}{r-1}$, the subspace $\Delta^{-1}\M_{X}(R) \sbs \Delta^{-1}\M_{X}(X) = \M^p_{*}(X)$ is the image of a bounded linear projection, where $\M_X(R) \sbs \M_X(X)$ denotes the subspace of all $\mu\in\M_X(X)$ with $\mu(X\setminus R) = 0$.
\end{lemma}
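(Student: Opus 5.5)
Let $\rho: X \to R$ be a Lipschitz retraction, say $L$-Lipschitz, with $\rho|_R = id_R$. We view $\rho$ as a Lipschitz map $X \to X$ (composing with the inclusion $R \hookrightarrow X$, which does not change distances in $X$), so that $\rho_\#: \M_{00}(X) \to \M_{00}(X)$ is defined. The candidate projection is
$$T := \Delta^{-1}\rho_\#\Delta : \M_*^p(X) \to \M_*^p(X).$$
By Lemma~\ref{lem:Lipschitz->bndd}, applied with $f = \rho$ and the given $L$, $T$ is bounded in the $p\to p$ norm for every $p<\frac{r}{r-1}$. Hence $T$ extends to a bounded operator on the completion. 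Since the statement concerns the (dense) subspace $\M^p_*(X)$, we can work with $T$ on it directly.

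Next we verify the algebraic properties.
\begin{itemize}
\item \emph{Range.} $\rho_\#$ preserves $\M_X(X)$ (total mass is preserved) and maps every finitely supported measure to one supported in $\rho(X) \sbs R$. Hence $\rho_\#\Delta\nu \in \M_X(R)$ for every $\nu\in\M_*(X)$, and so $T\nu \in \Delta^{-1}\M_X(R)$.
\item \emph{Identity on the subspace.} If $\mu \in \M_X(R)$, then $\rho_\#\mu = \mu$, because $\rho(x)=x$ for $x\in R$ and $\rho_\#$ acts on Dirac masses. Using $\Delta\Delta^{-1} = id_{\M_X(X)}$, established earlier in \S\ref{sec:lpactions}, we get
$$T(\Delta^{-1}\mu) = \Delta^{-1}\rho_\#\mu = \Delta^{-1}\mu.$$
\end{itemize}
Thus $T^2 = T$, and the image of $T$ is exactly $\Delta^{-1}\M_X(R)$. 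The identification $\Delta^{-1}\M_X(X) = \M^p_*(X)$ is the bijectivity of $\Delta$ shown before Lemma~\ref{lem:Lipschitz->bndd}.

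The only substantive ingredient is the uniform bound from Lemma~\ref{lem:Lipschitz->bndd}; once it is invoked, everything else is formal. The one point requiring care is the interpretation of ``image of a bounded projection'' at the level of the completion. If the statement is meant for the closures in $\ell^p(X\setminus\{*\})$, note that the extension $\overline{T}$ is still idempotent by continuity, and its range is the closure of $\Delta^{-1}\M_X(R)$. The reason is that $\overline{T}$ fixes this closure pointwise, while the range of $\overline{T}$ lies in the closure of $T(\M_*(X))$.
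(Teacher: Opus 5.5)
Your proposal is correct and takes the same approach as the paper: the paper also uses $\Delta^{-1}\pi_\#\Delta$ for a Lipschitz retraction $\pi$, gets boundedness from Lemma~\ref{lem:Lipschitz->bndd}, and notes that it is a projection because $\pi$ fixes $R$. The paper leaves to the reader the details you supply explicitly: the range check, the identity $\pi_\#\mu=\mu$ for $\mu\in\M_X(R)$, and the extension to the $\ell^p$-completion.
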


\begin{proof}
Let $p < \frac{r}{r-1}$. Let $\pi: X \to R$ be a Lipschitz retraction. We then consider the operator $\Delta^{-1}\pi_\#\Delta: \M^p_{*}(X) \to \Delta^{-1}\M_{X}(R)$. This operator is $p\to p$ bounded by Lemma~\ref{lem:Lipschitz->bndd}, and it is easy to see that it is a projection since $\pi$ is a retract.
\end{proof}

Recall that a uniformly Lipschitz action of a finitely generated group $\Gamma$ on a metric space $(X,d)$ has {\it compression exponent $\beta$} if for some point $x_0\in X$, there exists $C<\infty$ such that $d(\gamma\cdot x_0,x_0) \geq C^{-1}|\gamma|^\beta-C$ for all $\gamma\in\Gamma$, where $|\cdot|$ denote the word length.

\begin{theorem} \label{thm:Markov->lp}
Let $P$ be an admissible, Wasserstein $\frac12$-Contractive Markov operator, with dyadic growth rate $\leq r$ for some $r\in[0,\infty)$, on a connected, directed, bounded degree graph $X$. Let $R \sbs X$ be a Lipschitz retract. Let $\Gamma$ be a group acting on $R$ by uniformly Lipschitz maps with biLipschitz embedded orbit. Then for all $p<\frac{r}{r-1}$, $\Gamma$ acts properly on $\ell^p$ by uniformly Lipschitz affine maps with compression exponent $1/p$.
\end{theorem}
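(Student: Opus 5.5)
The plan is to build the affine action on the complemented subspace $V := \Delta^{-1}\M_X(R) \sbs \M^p_*(X)$ and then extend it to all of $\M^p_*(X)$. The completion of $\M^p_*(X)$ is $\ell^p(X\setminus\{*\})$, which is infinite-dimensional, so this gives an action on $\ell^p$.

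\textbf{Step 1 (linear part).} Let $\rho:X\to R$ be a Lipschitz retraction. For $\gamma\in\Gamma$ write $g_\gamma: R\to R$ for the action map; these are $L$-Lipschitz with $L$ independent of $\gamma$. Set $f_\gamma := g_\gamma\circ\rho : X\to X$, which is Lipschitz with a constant independent of $\gamma$. Define
$$\pi(\gamma) := \Delta^{-1}(f_\gamma)_\#\Delta\circ \Pi,$$
where $\Pi=\Delta^{-1}\rho_\#\Delta$ is the bounded projection onto $V$ from Lemma~\ref{lem:complemented}. By Lemma~\ref{lem:Lipschitz->bndd}, $\sup_\gamma\|\pi(\gamma)\|_{p\to p}<\infty$. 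To obtain a genuine representation, I restrict to $V$. There $\pi(\gamma)$ acts as $\Delta^{-1}(g_\gamma)_\#\Delta$, because $\Delta V=\M_X(R)$ and $f_\gamma=g_\gamma$ on $R$. So $\gamma\mapsto\pi(\gamma)|_V$ is a homomorphism, by functoriality of pushforward and $(g_\gamma)_\#\M_X(R)\sbs\M_X(R)$.

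On the complement $\ker\Pi$, let $\Gamma$ act trivially: $\tilde\pi(\gamma):=\pi(\gamma)\Pi+(I-\Pi)$. This is a uniformly bounded representation, and it extends to the completion $\ell^p(X\setminus\{*\})$ by density.

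\textbf{Step 2 (cocycle).} Fix $x_0\in R$ whose orbit is biLipschitz embedded. Define
$$b(\gamma):=\Delta^{-1}\big(\delta(\gamma x_0)-\delta(x_0)\big)\in V.$$
The cocycle identity follows directly: $b(\gamma\eta)=\Delta^{-1}(\delta(\gamma\eta x_0)-\delta(\gamma x_0))+b(\gamma)=\tilde\pi(\gamma)b(\eta)+b(\gamma)$. Then $\alpha(\gamma)v=\tilde\pi(\gamma)v+b(\gamma)$ is a uniformly Lipschitz affine action.

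\textbf{Step 3 (compression, the main point).} I need $\|b(\gamma)\|_p\gtrsim |\gamma|^{1/p}$ up to additive constants. By Gradation, $\Delta^{-1}(\delta(x)-\delta(*))=\sum_{n=0}^{N}P^n\delta(x)$ for $x\in X_N$, and the $n$-th term lives on level $N-n$. These levels are disjoint, so
$$\|b(\gamma)\|_p \ge \Big\|\sum_{n} P^n\delta(\gamma x_0)\one_{\text{levels}}\Big\|_p-\|\Delta^{-1}(\delta(x_0)-\delta(*))\|_p.$$
The second term is a constant. Restricting to the levels where the $P$-paths from $\gamma x_0$ and $x_0$ have not merged, each term $P^n\delta(\gamma x_0)$ is a probability measure with support of size at most $C_1$ (Morse Property). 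Hence each such level contributes at least $C_1^{1/p-1}$ to the $p$-th power of the norm, after accounting for subtracting the $x_0$ contribution on those levels.

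The delicate point is to show that the number $k$ of such levels is $\gtrsim d_X(\gamma x_0,x_0)\gtrsim|\gamma|$. My intended argument is by Locality: each step of $P$ moves points by at most $C_2$, and at least one level. So if $\gamma x_0\in X_N$ and $x_0\in X_M$, the levels $n>M$ carry mass only from $\gamma x_0$. This handles the case $N-M\gtrsim d(\gamma x_0,x_0)$. When the levels are comparable, I would instead bound below by the level-wise differences $P^n\delta(\gamma x_0)-P^{n'}\delta(x_0)$. These are nonzero as long as the supports are far apart, which holds for about $d/(2C_2)$ steps since supports move at most $C_2$ per step. On each such level the difference has $\ell^p$-norm at least a constant. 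This yields $\|b(\gamma)\|_p^p\gtrsim d(\gamma x_0,x_0)-C$. The biLipschitz orbit then converts this into $\gtrsim|\gamma|$, giving compression exponent $1/p$ and properness.
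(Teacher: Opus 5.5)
Your proof is correct and has the same architecture as the paper's: the complemented subspace $\Delta^{-1}\M_X(R)$ of Lemma~\ref{lem:complemented}, the linear part $\Delta^{-1}\gamma_\#\Delta$ bounded by Lemma~\ref{lem:Lipschitz->bndd}, a cocycle given by $\Delta^{-1}$ of a difference of Dirac masses, and a lower bound from Gradation, Morse and Locality.

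The real difference is the basepoint of the cocycle. The paper first conjugates by the transposition swapping $*$ and $\pi(*)$, so that $*\in R$. It then uses $b(\gamma)=\Delta^{-1}(\delta(\gamma\cdot *)-\delta(*))=\sum_{n=0}^{N}P^n\delta(\gamma\cdot *)$, a sum of disjointly supported probability measures on at most $C_1$ points. This gives $\|b(\gamma)\|_p^p\geq (N+1)C_1^{1-p}$ with $N+1\geq d(\gamma\cdot *,*)/C_2$ immediately.

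You instead keep an arbitrary $x_0\in R$ with biLipschitz orbit. This avoids the transposition, and also the tacit comparison between the orbit of $\pi(*)$ and the given biLipschitz orbit, at the cost of a second trajectory. But your first displayed triangle inequality already disposes of that trajectory. With all levels included, the first term is $\|\Delta^{-1}(\delta(\gamma x_0)-\delta(*))\|_p\geq ((N+1)C_1^{1-p})^{1/p}$, where $N+1\geq (d(\gamma x_0,x_0)-d(x_0,*))/C_2$. The subtracted term is a constant, so the subsequent ``delicate'' level count is unnecessary.

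That level-wise argument is also valid; it even gives $\|\Delta^{-1}(\delta(x)-\delta(y))\|_p^p\gtrsim d(x,y)-C$ uniformly in $x,y$. You should note, however, that there are only $M+1$ shared levels, so your count of ``about $d/(2C_2)$'' disjoint levels is capped. This is harmless: $d(\gamma x_0,x_0)\leq C_2(N+M+2)$ forces $N-M\gtrsim d(\gamma x_0,x_0)$ once $x_0$ is fixed and the distance is large, so your first case always applies.

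There is one minor slip: each level contributes at least $C_1^{1-p}$ to the $p$-th power of the norm, not $C_1^{1/p-1}$.

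You also make explicit two points the paper leaves implicit. First, Lemma~\ref{lem:Lipschitz->bndd} must be applied to the extension $g_\gamma\circ\rho$ of $g_\gamma:R\to R$ to all of $X$. Second, the action on the complemented subspace is extended to $\M^p_*(X)$ by letting $\Gamma$ act trivially on $\ker\Pi$.
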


\begin{proof}
Let $p < \frac{r}{r-1}$. It suffices to produce the action on a subspace of $\M_{*}^p(X)$ that is the image of a bounded linear projection, since this induces an action on $\ell^p$ in this obvious way. Let $\pi:X \to R$ be a Lipschitz retract. Replacing $R$ by $R' := (R\setminus\{\pi(*)\})\cup\{*\}$, where $*$ is the basepoint of $X$, we may assume that $* \in R$. This is because the transposition $\tau: X \to X$ that swaps $*$ and $\pi(*)$ is a biLipschitz equivalence between $R$ and $R'$, and hence $\tau$ conjugates the uniformly Lipschitz action of $\Gamma$ on $R$ to a uniformly Lipschitz action of $\Gamma$ on $R'$, and $\tau \circ \pi: X \to R'$ is a Lipschitz retraction.

Since, by Lemma~\ref{lem:complemented}, the subspace $\Delta^{-1}\M_{X}(R) \sbs \M^p_{*}(X)$ is the image of a bounded linear projection, it suffices to produce the required action on this subspace. If $\gamma: R \to R$ denotes the Lipschitz map associated to $\gamma\in\Gamma$, then the linear action of $\gamma$ on $\Delta^{-1}\M_{X}(R)$ is given by $\Delta^{-1}\gamma_\#\Delta$. This clearly satisfies the axioms of a group action, and $\sup_{\gamma\in\Gamma}\|\Delta^{-1}\gamma_\#\Delta\|_{p\to p}<\infty$ by Lemma~\ref{lem:Lipschitz->bndd}. It remains to produce a cocycle with compression exponent $1/p$.

We define $b: \Gamma \to \Delta^{-1}\M_{X}(R)$ by $b(\gamma) = \Delta^{-1}(\delta(\gamma\cdot *)-\delta(*))$. This is clearly a cocycle of the representation defined above. Let $\gamma\in\Gamma$. Set $D := d(\gamma\cdot *,*)$. We will show that $\|b(\gamma)\|_p^p \geq D/C' - C'$ for some $C'<\infty$, demonstrating a compression exponent of $1/p$. The Gradation, Morse, and Locality properties imply that there exists $C<\infty$ such that
\begin{itemize}
    \item the measures $\{P^n(\delta(\gamma\cdot*)\}_{n\geq 0}$ have pairwise disjoint support,
    \item for all $n \geq 0$, $P^n(\delta(\gamma\cdot*))$ is a probability measure supported on at most $C$ points, and
    \item for all $n < D/C$, we have $P^n(\delta(\gamma\cdot*)) \sbs \M_{00}(R\setminus\{*\})$.
\end{itemize}
Using these facts, we get
\begin{align*}
    \|b(\gamma)\|_p^p &= \|\Delta^{-1}(\delta(\gamma\cdot*)-\delta(*))\|_p^p \\
    &= \|\sum_{n\geq 0} P^n\one_{X\setminus\{*\}}(\delta(\gamma\cdot*)-\delta(*))\|_p^p \\
    &\geq \sum_{n=0}^{\lfloor D/C \rfloor} \|P^n(\delta(\gamma\cdot*))\|_p^p \\
    &\geq \frac{D/C-1}{C^{p}}.
\end{align*}
\end{proof}

\begin{theorem} \label{thm:hyperbolic->Markov}
Let $\Gamma$ be a finitely generated hyperbolic group. Let $Q$ denote the Assouad conformal dimension of $\partial\Gamma$. Then for all $r>Q$, there exists an admissible, Wasserstein $\frac12$-Contractive Markov operator, with dyadic growth rate $\leq r$, on a connected, directed, bounded degree graph $\tilde{X}$ such that $\Gamma$ is biLipschitz equivalent to a Lipschitz retract of $\tilde{X}$.
\end{theorem}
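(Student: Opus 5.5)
Fix $r>Q$. By definition of the Assouad conformal dimension, there is $s\in(Q,r)$ and a metric space $Z_0$ of sharp Assouad dimension $\leq s$ that is power quasisymmetrically equivalent to $\partial\Gamma$ (with a visual metric). The first step is to realize $Z_0$ inside a Euclidean cube without losing more than $r-s$ in dimension. Take $\alpha\in(0,1)$ with $s/\alpha<r$. The $\alpha$-snowflake $(Z_0,d^\alpha)$ is doubling, with sharp Assouad dimension $\leq s/\alpha$. By the Assouad embedding theorem, a further snowflake embeds biLipschitzly into some $\mathbb{R}^m$. To keep the dimension bound, apply Assouad to the $\beta$-snowflake of $(Z_0,d^\alpha)$ with $\beta<1$ chosen so that $s/(\alpha\beta)\leq r$.

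This yields $Z\sbs\mathbb{R}^m$ biLipschitz to the $\alpha\beta$-snowflake of $Z_0$. Hence $Z$ has sharp Assouad dimension $\leq s/(\alpha\beta)\leq r$, since this notion is biLipschitz invariant up to constants. Because a snowflake map is a power quasisymmetry and compositions of power quasisymmetries are power quasisymmetries, $Z$ is power quasisymmetric to $\partial\Gamma$. Boundaries are compact, so after translating and scaling (a similarity, which preserves everything) we may assume $Z\sbs[0,1]^m$ is compact and nonempty. We may also take $r\leq m$ by enlarging $m$ if needed, since Proposition~\ref{prop:admissible} asks for $r\in[0,m]$.

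Next, set $X=\Hyp(Z)$ with the restriction of $P_m$. By Theorem~\ref{thm:admissible} and Proposition~\ref{prop:admissible}, this is an admissible, Wasserstein $\frac12$-Contractive Markov operator on a connected bounded degree graph, with dyadic growth rate $\leq r$. By Theorem~\ref{thm:qifilling}, $\Gamma$ with its word metric is quasi-isometric to $X$. The Cayley graph vertex set is uniformly discrete and uniformly locally finite, so Lemma~\ref{lem:qi->biLip2} gives $K$ such that $\Gamma$ is biLipschitz equivalent to a coarsely dense subset $R$ of the metric $K$-leaf extension of $X$. By Proposition~\ref{prop:graphextension}, this is isometric to the graph $K$-leaf extension $\tilde{X}$. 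The operator $\tilde P$ on $\tilde{X}$ remains admissible, Wasserstein $\frac12$-Contractive, and of dyadic growth rate $\leq r$.

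The final step is to show that $R$ is a Lipschitz retract of $\tilde X$. Define $\pi$ to be the identity on $R$ and a nearest point projection elsewhere. Coarse density gives $d(x,\pi(x))\leq C$, so $d(\pi x,\pi y)\leq d(x,y)+2C$. Since $\tilde X$ is a graph, distinct points are at distance $\geq1$, and therefore $d(\pi x,\pi y)\leq (1+2C)d(x,y)$. Thus $\pi$ is Lipschitz.

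The main obstacle is the dimension bookkeeping in the first step. The embedding into $[0,1]^m$ requires snowflaking, which inflates the Assouad dimension, so the snowflake exponents must be chosen close enough to $1$ that the dimension stays below $r$. The strict inequality $r>Q$ provides exactly this room.
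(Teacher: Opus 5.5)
Your proposal is correct and follows the paper's proof step for step. You pass to a snowflake of a near-optimal model of $\partial\Gamma$, Assouad-embed it into a cube, and take $\Hyp(Z)$ with the restricted $P_m$ and its $K$-leaf extension via Lemma~\ref{lem:qi->biLip2} and Proposition~\ref{prop:graphextension}. You finish with a nearest-point projection to obtain the Lipschitz retract. The double snowflake is unnecessary, since Assouad's theorem applies directly to the $\alpha$-snowflake of the doubling space $Z_0$, but it is harmless. Your explicit treatment of compactness of $Z$, of the requirement $r\le m$, and of the Lipschitz bound for the projection fills in details the paper leaves implicit.
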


\begin{proof}
Let $r > Q$ and set $\eps := r-Q >0$. Let $\tilde{Z}$ be a metric space with sharp Assouad dimension $\leq Q+\eps/2$ such that $\partial\Gamma$ is power quasisymmetrically equivalent to $\tilde{Z}$. Replace $\tilde{Z}$ by its $(\frac{Q+\eps/2}{Q+\eps})$-snowflake, so that $\tilde{Z}$ now has sharp Assouad dimension $\leq Q+\eps = r$. Since the snowflake exponent is $<1$, the Assouad embedding theorem (\cite[Theorem~12.2]{Heinonen}) implies that $\tilde{Z}$ is biLipschitz equivalent to some $Z \sbs [0,1]^m$, for some $m\in\N$. Of course, $Z$ has sharp Assouad dimension $\leq r$ since $\tilde{Z}$ does, and $\partial\Gamma$ is power quasisymmetrically equivalent to $Z$.

By Theorem~\ref{thm:qifilling}, $\Gamma$ is quasi-isometric to $X := \Hyp(Z)$. Thus, by Lemma~\ref{lem:qi->biLip2} and Proposition~\ref{prop:graphextension}(1), we can find $K \in \N$ such that $\Gamma$ is biLipschitz equivalent to a coarsely dense subset of the $K$-leaf extension $\tilde{X}$ of $X$. Note that any nearest-neighbor projection from a uniformly discrete metric space onto a coarsely dense subset is Lipschitz, and therefore $\Gamma$ is biLipschitz equivalent to a Lipschitz retract $R$ of $\tilde{X}$.

Now, by Proposition~\ref{prop:admissible}, there exists an admissible, Wasserstein $\frac12$-Contractive Markov operator, with dyadic growth rate $\leq r$, on $X$. Then by Proposition~\ref{prop:graphextension}(2)(3)(4), there exists an admissible, Wasserstein $\frac12$-Contractive Markov operator, with dyadic growth rate $\leq r$, on $\tilde{X}$. This completes the proof.
\end{proof}

We are now ready to prove our main result, Theorem~\ref{thm:main}, restated here for the convenience of the reader.

\begin{theorem} \label{thm:main2}
Let $\Gamma$ be a finitely generated hyperbolic group. Let $Q$ denote the Assouad conformal dimension of $\partial\Gamma$. Then for all $p < \frac{Q}{Q-1}$, the group $\Gamma$ admits a proper uniformly Lipschitz affine action on $\ell^p$ with compression exponent $1/p$.
\end{theorem}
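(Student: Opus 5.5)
Given $p < \frac{Q}{Q-1}$, the plan is to choose $r>Q$ with $p<\frac{r}{r-1}$ and then chain Theorem~\ref{thm:hyperbolic->Markov} with Theorem~\ref{thm:Markov->lp}.

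The exponent choice is elementary, with the convention that $\frac{Q}{Q-1}=\infty$ when $Q\le 1$. The function $r\mapsto \frac{r}{r-1}$ is continuous and decreasing on $(1,\infty)$ and tends to $\infty$ as $r\downarrow 1$. So if $Q>1$, the strict inequality $p<\frac{Q}{Q-1}$ lets us pick $r>Q$ close enough to $Q$ that $p<\frac{r}{r-1}$ still holds. If $Q\le 1$, we take $r>1$ close to $1$, so that $r>Q$ and $\frac{r}{r-1}>p$. In the boundary cases $r\le 1$ one reads $\frac{r}{r-1}$ as $\infty$; the proof of Lemma~\ref{lem:Lipschitz->bndd} then gives contraction for all $p$, but we will not need this.

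With $r$ fixed, Theorem~\ref{thm:hyperbolic->Markov} supplies an admissible, Wasserstein $\frac12$-Contractive Markov operator $P$ with dyadic growth rate $\le r$ on a connected, directed, bounded degree graph $\tilde X$. It also gives a Lipschitz retract $R\subseteq\tilde X$ and a biLipschitz equivalence $\phi:\Gamma\to R$, where $\Gamma$ carries a word metric. We transport the left multiplication action through $\phi$ by setting $\gamma\cdot x := \phi(\gamma\phi^{-1}(x))$.
\begin{itemize}
\item Left multiplication is isometric on $\Gamma$ and $\phi$ is biLipschitz, so each map $x\mapsto\gamma\cdot x$ is $L$-Lipschitz on $R$ with $L$ independent of $\gamma$.
\item The orbit of $\phi(e)$ is $\phi(\Gamma)=R$, which is biLipschitz equivalent to $\Gamma$, so the orbit map is a biLipschitz embedding.
\end{itemize}
Theorem~\ref{thm:Markov->lp} then gives a proper uniformly Lipschitz affine action on $\ell^p$ with compression exponent $1/p$.

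There is one bookkeeping point. In its proof, Theorem~\ref{thm:Markov->lp} swaps $*$ and $\pi(*)$ so that $*\in R$, and it measures the cocycle via $d(\gamma\cdot*,*)$. We need $d(\gamma\cdot *,*)$ to be comparable to the word length $|\gamma|$. This follows from the biLipschitz orbit embedding together with the fact that changing the base point moves the distance by at most a bounded additive amount, since the action is uniformly Lipschitz.

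We therefore get $\|b(\gamma)\|_p\gtrsim |\gamma|^{1/p}-C$. This is both properness and the stated compression exponent.

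There is no real obstacle, since all the heavy lifting is done in Lemma~\ref{lem:Lipschitz->bndd} and Theorem~\ref{thm:hyperbolic->Markov}. The only care needed is the choice of $r$ (including the case $Q\le1$) and checking that the transported action meets the hypotheses of Theorem~\ref{thm:Markov->lp}.
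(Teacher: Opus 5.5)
Your proposal is correct and is essentially the paper's proof: pick $r>Q$ with $p<\frac{r}{r-1}$, apply Theorem~\ref{thm:hyperbolic->Markov}, transport the left-multiplication action of $\Gamma$ through the biLipschitz equivalence with the retract $R$, and invoke Theorem~\ref{thm:Markov->lp}. Your extra remarks make explicit details the paper leaves implicit, namely reading $\frac{Q}{Q-1}$ as $\infty$ when $Q\le 1$ (and then choosing $r>1$), and checking that $d(\gamma\cdot *,*)$ stays comparable to $|\gamma|$ up to additive constants after the basepoint swap.
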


\begin{proof}
Let $p < \frac{Q}{Q-1}$. Then we can find $r>Q$ with $p < \frac{r}{r-1}$. By Theorem~\ref{thm:hyperbolic->Markov}, there exists an admissible, Wasserstein $\frac12$-Contractive Markov operator, with dyadic growth rate $\leq r$, on a connected, directed, bounded degree graph $\tilde{X}$ such that $\Gamma$ is biLipschitz equivalent to a Lipschitz retract $R$ of $\tilde{X}$. Since $\Gamma$ is biLipschitz equivalent to $R$ and acts isometrically on itself with isometrically embedded orbit, it acts by uniformly Lipschitz maps on $R$ with biLipschitzly embedded orbit. Then by Theorem~\ref{thm:Markov->lp}, $\Gamma$ admits a proper uniformly Lipschitz affine action on $\ell^p$ with compression exponent $1/p$.
\end{proof}

\subsection*{AI Usage Statement} AI tools (Gemini 3.5, Claude Opus 5, and ChatGPT 5.6) were used to search for references and to create Figures~\ref{fig:Hyp(Z)}, \ref{fig:Markov-vertex}, \ref{fig:cd}, and \ref{fig:Markov-edge}. All mathematical ideas in the paper were developed by the authors by 2025, and no AI tools were used in that process.

\bibliographystyle{alpha}

\newpage

\appendix
\section{} \label{app}

In this Appendix we prove Theorem~\ref{thm:qifilling}. We will rely on Bonk-Schramm's work \cite{BS} establishing an equivalence between the category of visual hyperbolic metric spaces with quasi-isometries, and the category of bounded, complete metric spaces with power quasisymmetries. We will also directly use the work of Bj{\"o}rn-Bj{\"o}rn-Shanmugalingam \cite{BBS} who constructed a hyperbolic filling similar to ours and established its basic properties.  

\subsection{Bonk-Schramm's equivalence of categories}

Let $(X,d_X)$ be a metric space, $K<\infty$, and $I = [0,b]$ or $I = [0,\infty)$. The image of $I$ under a map $f: I \to X$ satisfying $|s-t| - K \leq d_X(f(s),f(t)) \leq |s-t| + K$ is called a \emph{$K$-rough geodesic} if $I = [0,b]$ and a \emph{$K$-rough geodesic ray} if $I = [0,\infty)$. If $I = [0,b]$, the points $f(0),f(b)$ are called the \emph{endpoints} of the rough geodesic, and if $I = [0,\infty)$, the rough geodesic ray is said to {\it emanate} from the point $f(0)$. A metric space is \emph{roughly geodesic} if there exists $K<\infty$ such that every pair of points are the endpoints of some $K$-rough geodesic. A metric space is \emph{visual} if there exist $o\in X$ and $K' <\infty$ such that $X$ is the union of all $K'$-rough geodesic rays emanating from $o$. It follows from the quasi-isometric stability of geodesics \cite[Theorem~1.3.2]{Buyalo} that visuality is a quasi-isometric invariant of geodesic hyperbolic metric spaces. Finitely generated hyperbolic groups are known to be visual (see, for example, \cite[Lemma~3.1]{Bestvina}), and visual hyperbolic spaces are known to be roughly geodesic \cite[Proposition~5.6]{BS}.

We can now recall the equivalence of categories of Bonk-Schramm.

\begin{theorem}[\cite{BS}] \label{thm:BS}
Let $X$ and $X'$ be visual, hyperbolic metric spaces. Then $X$ and $X'$ are quasi-isometrically equivalent if and only if $\partial X$ and $\partial X'$ are power quasisymmetrically equivalent.
\end{theorem}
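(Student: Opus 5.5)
This is the Bonk--Schramm theorem, and I will prove it by citing their work rather than reproving it.

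\emph{Forward direction.} Assume $f\colon X\to X'$ is a quasi-isometry between visual hyperbolic spaces. Quasi-isometries between roughly geodesic hyperbolic spaces quasi-preserve Gromov products: there are $\lambda\ge 1$ and $c<\infty$ with
\[
\lambda^{-1}(x|y)_o-c\le (f(x)|f(y))_{f(o)}\le \lambda (x|y)_o+c .
\]
Recall from above that visual spaces are roughly geodesic. The proof of this inequality uses stability of rough geodesics, as in \cite[Theorem~1.3.2]{Buyalo}, applied to rough geodesic rays from $o$.

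So $f$ sends sequences converging at infinity to sequences converging at infinity and respects equivalence, hence induces a map $\partial f\colon \partial X\to\partial X'$. A coarse inverse of $f$ induces the inverse map, so $\partial f$ is a bijection.

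Fix visual metrics $d\asymp e^{-\eps(\cdot|\cdot)_o}$ and $d'\asymp e^{-\eps'(\cdot|\cdot)_{o'}}$. Bonk--Schramm show that $\partial f$ is power quasisymmetric; this is the quasi-isometry-to-boundary step in \cite{BS}. The verification I would give: for distinct boundary points $\xi,\eta,\zeta$, the ratio $d(\xi,\zeta)/d(\xi,\eta)$ is comparable to $e^{-\eps((\xi|\zeta)_o-(\xi|\eta)_o)}$. Change the basepoint to a point $w$ on a rough ray toward $\xi$ at height about $(\xi|\eta)_o$. With respect to $w$, the Gromov products differ by a bounded amount and are controlled linearly, which gives the two power regimes in $\eta_{\alpha,\lambda}$.

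\emph{Reverse direction.} Assume $\partial X$ and $\partial X'$ are power quasisymmetrically equivalent. I would use Bonk--Schramm's embedding theorem: a visual hyperbolic space $X$ is quasi-isometric to the hyperbolic cone $\mathrm{Con}(\partial X)$ over its boundary with a visual metric. One could equally use a hyperbolic filling in the sense of \cite{BBS}. Two such cones are quasi-isometric whenever the base spaces are power quasisymmetric, since $\mathrm{Con}$ is functorial for power quasisymmetries. Snowflaking changes the cone only by a rescaling of the height coordinate, which is biLipschitz, so the choice of visual metric is irrelevant. Composing the three quasi-isometries gives $X\simeq X'$.

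\emph{Main obstacle.} The hardest step is the quasi-isometric identification of $X$ with $\mathrm{Con}(\partial X)$. Its proof sends $x\in X$ to (a point of a rough ray through $x$ ending at $\xi$, height $d(o,x)$), then uses visuality and $\delta$-hyperbolicity to compare distances. This is exactly \cite[Theorems~7.2, 8.2]{BS}, so I would cite it rather than reprove it.
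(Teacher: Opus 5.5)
Your proposal is correct and follows the same route as the paper, which likewise just cites Bonk--Schramm: the forward direction comes from \cite[Theorem~6.5(2)]{BS} together with the fact that visual spaces are roughly geodesic \cite[Proposition~5.6]{BS}, and the reverse direction comes from the hyperbolic cone results \cite[Theorems~7.4 and~8.2]{BS}. One small imprecision: replacing the visual metric by a snowflake changes the cone by a rough similarity, not a biLipschitz map, but that rough similarity is a quasi-isometry, which is all your argument needs.
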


\begin{proof}
The ``only if" direction follows from \cite[Theorem~6.5(2)]{BS} and the fact that visual hyperbolic metric spaces are roughly geodesic (\cite[Proposition~5.6]{BS}). The ``if" follows from \cite[Theorem~7.4]{BS} and \cite[Theorem~8.2]{BS}.
\end{proof}

\subsection{The Hyperbolic filling of Bj{\"o}rn-Bj{\"o}rn-Shanmugalingam}
\label{ss:BBS}
Let $(Z,d)$ be a nonempty metric space with $\diam(Z)<1$. Let $\{A_n\}_{n\geq 0}$ be an increasing sequence of subsets of $Z$ such that $A_n$ is a maximal $2^{-n}$-separated subset for each $n \geq 0$, by which we mean $d(x,y) \geq 2^{-n}$ for every $x\neq y \in A_n$, and $A_n$ is not strictly included in any other subset with this property. Such sequences always exist by Zorn's Lemma. Note that the assumption $\diam(Z)<1$ guarantees $|A_0|=1$.

We will form a graph from $\{A_n\}_{n\geq 0}$. The vertex set is
\begin{equation*}
\Hyp_{\rm BBS}(Z) := \cup_{n=0}^{\infty} V_n, \quad \text{where } V_n = \{(n,x) : x \in A_n\}.
\end{equation*}
Two distinct vertices $(n,x), (m,y) \in \Hyp_{\rm BBS}(Z)$ will form an edge $(n,x) \sim (m,y)$ if $|n - m| \le 1$ and
\begin{align*}
B_{2^{-n+1}}(x) \cap B_{2^{-n+1}}(y) &\neq \emptyset, \quad \text{if } m = n, \\
B_{2^{-n}}(x) \cap B_{2^{-n}}(y) &\neq \emptyset, \quad \text{if } m = n \pm 1.
\end{align*}
Edges of the first type will be called {\it horizontal} and the second type {\it vertical}. We define the {\it hyperbolic filling (in the sense of \cite{BBS})} to be the graph $\Hyp_{\rm BBS}(Z)$ formed by the vertex set $V$ together with the edge set defined above. We equip $\Hyp_{\rm BBS}(Z)$ with its (unweighted) shortest path metric $d_{\Hyp_{\rm BBS}(Z)}$ (the condition $|A_0|=1$ ensures  that the graph is connected).

\begin{remark}
By including each edge of the graph in the space $d_{\Hyp_{\rm BBS}(Z)}$ as an isometric image of the unit interval, we may assume that $d_{\Hyp_{\rm BBS}(Z)}$ is a geodesic metric space, although we will sometimes shift our point-of-view and think of $d_{\Hyp_{\rm BBS}(Z)}$ as a metric only on the vertex set. Since we are concerned with the space $d_{\Hyp_{\rm BBS}(Z)}$ only up to quasi-isometry, this shift in viewpoint creates no problem. We treat the space $\Hyp(Z)$ of Definition~\ref{def:subEuclidean} in the same way.
\end{remark}

We recall the following facts from \cite{BBS}.

\begin{proposition}[\cite{BBS}] \label{prop:BBS}
For any nonempty, compact, doubling metric space $(Z,d)$ with $\diam(Z) < 1$, the filling $\Hyp_{\rm BBS}(Z)$ is proper, visual, and hyperbolic, and $\partial \Hyp_{\rm BBS}(Z)$ is power quasisymmetric to $Z$.
\end{proposition}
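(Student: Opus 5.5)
This proposition is cited from \cite{BBS}, so my plan is to assemble it from standard hyperbolic-filling arguments. I will check four things in order: properness, hyperbolicity, visuality, and the identification of the boundary.

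\emph{Properness.} I would first show that $\Hyp_{\rm BBS}(Z)$ has bounded degree.
\begin{itemize}
\item The $(n-1)$-, $n$- and $(n+1)$-level neighbours of $(n,x)$ are points of $2^{-n-1}$-separated nets lying in a ball of radius comparable to $2^{-n}$ around $x$.
\item Doubling of $Z$ bounds the number of such points uniformly in $n$.
\item A connected graph with bounded degree and unit-length edges is proper.
\end{itemize}

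\emph{Hyperbolicity.} The key geometric estimate is a two-sided comparison of Gromov products with the metric on $Z$. Fix the root $o=(0,x_0)$. For vertices $(n,x)$ and $(m,y)$ I claim
\[
(x|y)_o \approx \min\{n,\; m,\; \log_2(1/d(x,y))\}
\]
up to an additive constant. I would prove it in three steps.
\begin{itemize}
\item \emph{Vertical chains.} Every vertex $(n,x)$ has a parent $(n-1,x')$ with $d(x,x')<2^{-n+1}$, by maximality of $A_{n-1}$. Iterating gives a chain to the root of length exactly $n$. Conversely, each edge changes the level by at most $1$, so $d((n,x),o)=n$.
\item \emph{Upper bound on distances.} Follow the parent chains of both vertices up to the level $k\approx\log_2(1/d(x,y))$. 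At that level the ancestors are within a bounded number of horizontal steps of each other, by the triangle inequality and the geometric series of ancestor displacements. This gives $d((n,x),(m,y))\le n+m-2k+C$.
\item \emph{Lower bound on distances.} A horizontal edge at level $j$ moves the base point by at most $2^{-j+2}$, and a vertical edge by comparably much. So any path between the two vertices must climb to a level $\lesssim\log_2(1/d(x,y))$, which forces its length to be at least $n+m-2k-C$.
\end{itemize}
Once the comparison holds, $\min\{a,b\}\approx$ behaves like an ultrametric up to additive error. Hence the $\delta$-inequality $(x|z)_o\ge\min\{(x|y)_o,(y|z)_o\}-\delta$ follows directly, using $d(x,z)\le 2\max\{d(x,y),d(y,z)\}$.

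\emph{Visuality.} Each vertex $(n,x)$ lies on the vertical geodesic ray obtained by concatenating its parent chain with a descending chain of children $(k,x_k)$, $k>n$, where $x_k\in A_k$ and $d(x_k,x)<2^{-k}$. These rays are genuine geodesics by the level-distance formula, so the space is visual.

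\emph{Boundary.} Each $z\in Z$ determines a descending vertical ray, and the Gromov product formula shows that rays for different $z$ are inequivalent while rays for the same $z$ are equivalent. Compactness of $Z$ gives surjectivity: any sequence converging at infinity has base points forming a Cauchy sequence in $Z$. The comparison then yields
\[
(z|w)_o=\log_2(1/d(z,w))+O(1),
\]
so $d$ is itself a visual metric with parameter $\log 2$. In particular $\partial\Hyp_{\rm BBS}(Z)$ is even snowflake-, and hence power quasisymmetrically, equivalent to $Z$.

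The main obstacle is the lower bound on path lengths in the Gromov product comparison. It needs the summed displacement estimate along arbitrary paths, not just vertical ones, and this is exactly where the separation and covering constants $2^{-n+1}$ and $2^{-n}$ must be tracked carefully.
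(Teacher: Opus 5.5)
Your route differs from the paper's. The paper proves nothing here from scratch. It cites \cite{BBS} for properness, visuality and hyperbolicity. For the boundary it uses the geodesic-ray model of $\partial\Hyp_{\rm BBS}(Z)$, together with the BBS facts that every geodesic ray from the root is vertical and that their Lemma~3.3 estimate holds. This makes ray $\mapsto$ limit of base points a snowflake embedding $\partial\Hyp_{\rm BBS}(Z)\to Z$, which is onto by density of $\bigcup_n A_n$ and compactness. You instead derive everything from one two-sided vertex estimate $(p|q)_o=\min\{n,m,\log_2(1/d(x,y))\}+O(1)$, and you map $Z\to\partial\Hyp_{\rm BBS}(Z)$ via sequences. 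That bypasses the ray model and the verticality lemma, and shows directly that $d$ is a visual metric. The cost is that you must actually prove the estimate, which is exactly what the paper outsources.

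That is where your sketch has a gap. The assertion that any path between the two vertices must climb to a level $\lesssim\log_2(1/d(x,y))$ is false. Take $Z=[0,\frac12]$ and $A_n=2^{-n}\Z\cap[0,\frac12]$: two level-$n$ vertices with $|x-y|=2^{-k}$, $k<n$, are joined by a horizontal path of length $2^{n-k}$ that stays at level $n$. The per-edge displacement bound alone does not rescue the conclusion either. A path of length $L$ with minimal level $j=k+t$ only yields $L\ge\max\{n+m-2k-2t,\ 2^{t-2}\}$. This falls short of $n+m-2k-O(1)$ by up to $2t$, and $t$ can be large when $n+m-2k$ is. This is not a matter of tracking constants.

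The missing idea is the V-shaped constraint on levels. Let $\ell_0=n,\ell_1,\dots,\ell_L=m$ be the levels along the path. Then $\ell_i\ge\max\{n-i,\,m-L+i\}$, and each edge leaving the $i$-th vertex moves the base point by less than $4\cdot 2^{-\ell_i}$. Since each value of the V-shaped exponent occurs at most twice,
\[
d(x,y)<4\sum_{i=0}^{L-1}2^{-\max\{n-i,\,m-L+i\}}\le 16\cdot 2^{-(n+m-L)/2},
\]
that is, $L>n+m-2\log_2(1/d(x,y))-8$. This is the bound you need.

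There are also two smaller points.
\begin{itemize}
\item In the upper bound, iterated-parent ancestors $x_k,y_k$ need not be horizontally adjacent, and the triangle inequality cannot show they are. Adjacency at level $k$ requires a common point of two $2^{-k+1}$-balls, and $Z$ need not be geodesic. Choose instead $x_j\in A_j$ with $d(x,x_j)<2^{-j}$, as in your visuality step. Then $x$ itself witnesses $(k,x_k)\sim(k,y_k)$ once $d(x,y)<2^{-k}$.
\item You check the four-point condition only at the root. You should invoke the standard fact that this yields $2\delta$-hyperbolicity at every basepoint.
\end{itemize}
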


Before proving the proposition, we need to recall the following fact. When $X$ is a proper, geodesic, hyperbolic metric space, there is an alternate description of $\partial X$ in terms of equivalence classes of geodesic rays emanating from a fixed basepoint $o$. Let $o\in X$. We say that two geodesic rays $\gamma_1,\gamma_2: [0,\infty) \to X$ with $\gamma_i(0) = o$ for $i\in\{1,2\}$ are {\it equivalent} if $\sup_t d(\gamma_1(t),\gamma_2(t)) < \infty$. Let us denote the set of equivalence classes by $\partial_g X$. By \cite[Proof of Lemma~3.13]{BH}, the map $\partial_g X \to \partial _X$ defined by $[\gamma] \mapsto \{\gamma(i)\}_{i=1}^\infty$ is a bijection.

\begin{proof}[Proof of Proposition~\ref{prop:BBS}]
That $\Hyp_{\rm BBS}(Z)$ is visual is \cite[Corollary~3.2(d)]{BBS}, that it is hyperbolic is \cite[Theorem~3.4]{BBS}, and that it is proper follows from \cite[Proposition~4.5]{BBS}. The identification of the boundary of $\Hyp_{\rm BBS}(Z)$ with $Z$ is \cite[Proposition~4.4]{BBS}, with the caveat that the definition of boundary they work with is not defined in exactly the same way as in \S\ref{ss:hyperbolic}. Instead of attempting to explain the connection between the two notions of boundary, we will simply use intermediate results of \cite{BBS} to prove in the next paragraph that $\partial \Hyp_{\rm BBS}(Z)$ is snowflake equivalent to $Z$.

By \cite[Corollary~3.2(b)]{BBS}, every geodesic ray $\gamma:[0,\infty) \to \Hyp_{\rm BBS}(Z)$ emanating from the basepoint of $\Hyp_{\rm BBS}(Z)$ consists solely of vertical edges. By definition, this means that $\gamma(n) = (n,a_n)_{n\geq 0}$ where $a_n \in A_n$ and $d(a_{n+1},a_n) \leq 2^{-n}$ for all $n\geq 0$. The sequence $(a_n)_n$ is obviously Cauchy, and so we map such a ray to the limit in $Z$ of this sequence. The estimate in \cite[Lemma~3.3]{BBS} says that this map is a snowflake embedding (and hence a power quasi-symmetric embedding) from $\partial\Hyp_{\rm BBS}(Z)$ into $Z$. Finally, by maximality of each $A_n$, the set $\cup_{n\geq 0} A_n$ is obviously dense in $Z$ and also is contained in the image of the map, so the map is surjective since it's continuous (being a snowflake embedding) and has compact domain \cite[Exercise~2.3.6]{Buyalo}.
\end{proof}

We will need to observe one additional fact about the structure of $\Hyp_{\rm BBS}(Z)$ which will allow us to prove, in Proposition~\ref{prop:EuclideanqiBBS}, that $\Hyp_{\rm BBS}(Z)$ is quasi-isometrically equivalent to $\Hyp(Z)$.

\begin{lemma} \label{lem:BBS}
Let $(Z,d)$ be a nonempty and compact metric space with $\diam(Z) < 1$. Let $\Hyp_{\rm BBS}(Z) = \cup_{n\geq 0} V_n$ be the hyperbolic filling defined in this subsection. For every constant $C<\infty$, there exists another constant $C'<\infty$ such that for all $n\geq 0$, $\eps \in \{0,1\}$, $(n,u) \in V_n$, and $(n+\eps,v)\in V_{n+\eps}$, if $d(u,v) \leq C2^{-n}$, then $d_{\Hyp_{\rm BBS}(Z)}((n,u),(n+\eps,v)) \leq C'$.
\end{lemma}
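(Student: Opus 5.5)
The plan is to imitate the proof of Lemma~\ref{lem:Hyp(Z)distance}. We climb up from both vertices along vertical edges to a common coarser level $n-k$, with $k$ depending only on $C$, and there join the two ancestors by a single horizontal edge.

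Fix $C$ and choose $k\in\N$ with $2^{k}\geq C+4$, say; the exact constant is to be tuned below. Given $(n,u)$ and $(n+\eps,v)$ with $d(u,v)\leq C2^{-n}$, we first handle the degenerate case $n\leq k$. Every vertex $(j,a)$ is joined to some vertex of $V_{j-1}$ by a vertical edge: by maximality of $A_{j-1}$ there is $a'\in A_{j-1}$ with $d(a,a')<2^{-j+1}$, so $a'\in B_{2^{-j+1}}(a)\cap B_{2^{-j+1}}(a')$. Hence both vertices lie within distance $k+1$ of the unique vertex of $V_0$ (recall $|A_0|=1$), and $C'\geq 2k+2$ suffices.

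For $n>k$ we build vertical chains. Start with $a_n:=u$ and, inductively, pick $a_{j-1}\in A_{j-1}$ with $d(a_j,a_{j-1})<2^{-(j-1)}$, which is possible by maximality of $A_{j-1}$. Each step is a vertical edge, since $a_{j-1}$ lies in both balls of radius $2^{-(j-1)}$. The geometric sum gives
\[
d(u,a_{n-k})<\sum_{j=n-k}^{n-1}2^{-j}<2^{-n+k+1}.
\]
Do the same from $v$ at level $n+\eps$ down to level $n-k$ using $k+\eps$ vertical edges. This produces $b_{n-k}$ with $d(v,b_{n-k})<2^{-n+k+1}$.

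By the triangle inequality,
\[
d(a_{n-k},b_{n-k})<2^{-n+k+2}+C2^{-n}.
\]
For a horizontal edge at level $n-k$ we need $B_{2^{-n+k+1}}(a_{n-k})\cap B_{2^{-n+k+1}}(b_{n-k})\neq\emptyset$, which holds once $d(a_{n-k},b_{n-k})<2^{-n+k+1}$. Here a point of the space such as $a_{n-k}$ itself witnesses the nonempty intersection, assuming the balls are interpreted in $Z$. The computed bound, however, is only about $2^{-n+k+2}$, not $2^{-n+k+1}$. This bookkeeping is \textbf{the main obstacle}: the geometric sums contribute a fixed multiple of the target scale, which does not shrink as $k$ grows.

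The first remedy I would try is to take the ancestor step with distance $<2^{-j}$, obtained from $A_{j-1}$ being a net at scale $2^{-(j-1)}$ together with an extra intermediate bound. If that does not work, I would allow a bounded number $M$ of horizontal edges at level $n-k$ instead of one. Points at distance $<2^{-n+k+3}$ that both lie within $2^{-n+k}$ of $A_{n-k}$ can be connected by a chain of net points. Concretely, pick points along a near-geodesic chain in $Z$, or simply note that balls of radius $2^{-n+k+1}$ around points at distance $\leq 2^{-n+k+2}$ intersect whenever $Z$ contains a suitable midpoint. Lacking that, I would go up one or two more levels, which makes the distance strictly less than the edge threshold at that coarser scale. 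I expect this last option to be the cleanest. At level $n-k-2$ the accumulated distance is $<2^{-n+k+3}+C2^{-n}$. Since the chains' increments decay geometrically, only the top few terms matter. I would then check carefully whether some fixed extra number of levels closes the gap or whether a genuinely different estimate is needed.

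In the final count, the path consists of at most $2k+O(1)+1$ vertical edges plus one (or boundedly many) horizontal edges. So $C'$ depends only on $C$, and the case $n\leq k$ is absorbed by also requiring $C'\geq 2k+2$.
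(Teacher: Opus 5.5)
There is a genuine gap. Your skeleton matches the paper's: vertical chains up to level $n-k$ with $k$ depending only on $C$, the geometric-sum bound $d(u,a_{n-k})<2^{-n+k+1}$, and the degenerate case $n\le k$. But the horizontal step, which you correctly flag as the main obstacle, is never resolved, and none of your remedies closes it.

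Joining $a_{n-k}$ and $b_{n-k}$ by one horizontal edge requires a point of $Z$ within $2^{-n+k+1}$ of both. Your bound $d(a_{n-k},b_{n-k})<2^{-n+k+2}+C2^{-n}$ does not produce such a point in a general compact $Z$; it would only do so if $Z$ had approximate midpoints. Nor is $u$ a witness: the triangle inequality gives only $d(u,b_{n-k})<2^{-n+k+1}+C2^{-n}$. Your remedies fail as follows:
\begin{itemize}
\item Remedy (1) is unavailable, since maximality of $A_{j-1}$ yields only $d(a_j,A_{j-1})<2^{-(j-1)}$.
\item Going up more levels is scale invariant, exactly as you observe, so the factor-$2$ deficit persists at every level.
\item The chain-of-net-points remedy, as you justify it, relies on near-geodesics or midpoints in $Z$, which a general compact $Z$ (e.g.\ a Cantor set) does not have.
\end{itemize}

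The missing idea is to let the original points $u$ and $v$ certify the horizontal edges, passing through one intermediate net point. By maximality, choose $a\in A_{n-k}$ with $d(u,a)<2^{-n+k}$.
\begin{itemize}
\item Since $d(u,a_{n-k})<2^{-n+k+1}$, the point $u$ lies in $B_{2^{-n+k+1}}(a_{n-k})\cap B_{2^{-n+k+1}}(a)$. So $(n-k,a_{n-k})$ and $(n-k,a)$ are joined by a horizontal edge, or are equal.
\item We have $d(v,a)<C2^{-n}+2^{-n+k}\le 2^{-n+k+1}$ whenever $2^k\ge C$, which your choice $2^k\ge C+4$ already guarantees. Together with $d(v,b_{n-k})<2^{-n+k+1}$, this makes $v$ a witness for a horizontal edge between $(n-k,a)$ and $(n-k,b_{n-k})$.
\end{itemize}
This yields a path of length at most $2k+2+\eps$. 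It is the paper's argument: the paper connects each of $(n,u)$ and $(n+\eps,v)$ to the common vertex $(n-k,a)$ by $k$ (resp.\ $k+\eps$) vertical edges plus at most one horizontal edge. That horizontal edge is always certified by $u$ or $v$ itself, not by a net point.
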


\begin{proof}
Let $C<\infty$, $n\geq 0$, $\eps \in \{0,1\}$, $(n,u) \in V_n$, and $(n+\eps,v) \in V_{n+\eps}$ with $d(u,v) \leq C2^{-n}$. Let $\{A_n\}_{n\geq 0}$ be an increasing sequence of maximal $2^{-n}$-separated subsets of $Z$ used to construct $\{V_n\}_{n\geq 0}$ in the definition of $\Hyp_{\rm BBS}(Z)$. It follows from $d(u,v) \leq C2^{-n}$ that there exist a number $0\leq k \leq n$, depending only on $C$, and a point $a \in A_{n-k}$ such that $u,v \in B_{2^{k-n}}(a)$. We claim that $(n,u)$ can be connected to $(n-k,a)$ with a path of length at most $k+1$. Once this is established, it follows symmetrically that the same is true of $v$ (but with length $k+1+\eps$), which implies $d_{\Hyp_{\rm BBS}(Z)}((n,u),(n+\eps,v)) \leq 2k+1+\eps$, proving the lemma. It remains to prove the claim.

First suppose that $k=0$. Then $u,a \in A_n$ and $u\in B_{2^{-n}}(a)$, showing that there is a horizontal edge between $(n,u)$ and $(n,a)$ (unless they are equal, in which case the claim is trivially true). This proves the claim in the case $k=0$. Now assume that $k \geq 1$. By maximality of $A_j$, there exists a sequence of points $a_j \in A_j$ for $j \in \{n-k,n-k+1,\dots n\}$ such that $a_n = u$ and $d(a_{j-1},a_j) \leq 2^{-j+1}$ for all $n-k < j \leq n$. Note that this implies $(j,a_j)$ and $(j-1,a_{j-1})$ are joined by a vertical edge for all $n-k < j \leq n$. Summing these distances over $j$ and applying the triangle inequality yields $d(u,a_{n-k}) < 2^{k-n+1}$. Since we also have $d(u,a) \leq 2^{k-n}$, this shows that $(n-k,a)$ and $(n-k,a_{n-k})$ are joined by a horizontal edge (unless they are equal). Hence, we have constructed a path of length at most $k+1$ ($k$ vertical edges, at most 1 horizontal edge) joining $(n,u)$ to $(n-k,a)$.
\end{proof}

\begin{proposition} \label{prop:EuclideanqiBBS}
Let $m\in\N$ and $Z \sbs [0,1]^m$ be nonempty and compact with $\diam(Z)<1$. Then $\Hyp(Z)$ (see Definition~\ref{def:subEuclidean}) and $\Hyp_{\rm BBS}(Z)$ are quasi-isometrically equivalent.
\end{proposition}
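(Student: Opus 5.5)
We will build explicit coarse maps in both directions and check that each is coarse Lipschitz and that the two are coarse inverses. Both graphs are connected graphs with the path metric. It therefore suffices to check that each map sends adjacent vertices to points at uniformly bounded distance; a bounded-length edge path then maps to a bounded-length path, giving the coarse Lipschitz bound.

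Fix the maximal $2^{-n}$-separated nets $A_n\sbs Z$ defining $\Hyp_{\rm BBS}(Z)$. We use the $\ell^\infty$ metric on $Z$ throughout; any other norm changes constants only. We define $F:\Hyp(Z)\to \Hyp_{\rm BBS}(Z)$ level by level. Send $*$ to the unique vertex $(0,a_0)$ of $V_0$; note $|A_0|=1$ since $\diam(Z)<1$. For a vertex $\{n\}\times u$ with $n\geq 0$, the point $u$ lies in $Q^{(0)}$ for some $Q\in\D_n(Z)$, so $\|u-z\|\leq 2^{-n}$ for some $z\in Q\cap Z$. Maximality of $A_n$ gives $a\in A_n$ with $\|z-a\|<2^{-n}$. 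Set $F(\{n\}\times u):=(n,a)$, so that $\|u-a\|\leq 2\cdot 2^{-n}$.

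In the other direction, define $G:\Hyp_{\rm BBS}(Z)\to\Hyp(Z)$ by sending $(n,a)$ to $\{n\}\times u$, where $u$ is a vertex of some $Q\in\D_n(Z)$ containing $a$. Such a $Q$ exists because $a\in Z$ and the $n$th dyadic cubes cover $[0,1]^m$. Again $\|u-a\|\leq 2^{-n}$.

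Now take an edge $x\sim y$ of $\Hyp(Z)$. If it is vertical between levels $n$ and $n+1$ (or joins level $0$ to $*$), or horizontal within level $n$, then the underlying points are within $2^{-n}$ of each other. Hence the images $F(x)$ and $F(y)$ lie at levels $n$ and $n+\eps$ with $\eps\in\{0,1\}$, and their points in $Z$ are within $C2^{-n}$ of each other, where $C$ is absolute (up to $m$). Lemma~\ref{lem:BBS} then bounds $d_{\Hyp_{\rm BBS}(Z)}(F(x),F(y))$ uniformly. Edges touching $*$ are handled separately: they land in $V_0\cup V_1$, where the bound is trivial by the same lemma.

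Symmetrically, take an edge $(n,a)\sim(n+\eps,b)$ of $\Hyp_{\rm BBS}(Z)$. The definition of the edges gives $d(a,b)<4\cdot 2^{-n}$, so the image points $u$ and $v$ satisfy $\|u-v\|\leq C2^{-n}$. Lemma~\ref{lem:Hyp(Z)distance} then gives a uniform bound on $d_{\Hyp(Z)}(G(\cdot),G(\cdot))$. So both $F$ and $G$ are coarse Lipschitz.

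For the coarse inverse property, consider $G\circ F(\{n\}\times u)=\{n\}\times u'$. Here $\|u-u'\|\leq\|u-a\|+\|a-u'\|\leq 3\cdot 2^{-n}$, and both points lie at the same level, so Lemma~\ref{lem:Hyp(Z)distance} bounds their distance uniformly. Likewise $F\circ G(n,a)=(n,a')$ with $d(a,a')\leq 2\cdot 2^{-n}+2^{-n}$, so Lemma~\ref{lem:BBS} applies. The basepoint $*$ is sent by $G\circ F$ to a level-$0$ vertex, which is at distance $1$ from $*$.

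The only real obstacle is bookkeeping. Lemma~\ref{lem:BBS} is stated for the metric $d$ on $Z$, so we take $d$ to be the restriction of $\|\cdot\|$. We must also make sure the level offset is always $\eps\in\{0,1\}$ in the correct order: apply the lemmas with the lower-index level as $n$, possibly after swapping the two points. Finally, $Z$ inherits doubling and compactness from $[0,1]^m$, so $\Hyp_{\rm BBS}(Z)$ is a well-defined connected graph.
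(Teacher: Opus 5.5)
Your proposal is correct and is essentially the paper's proof. It uses the same two maps (a point of $A_n$ near $u$, and a vertex of a dyadic cube containing $a$), the same edge-by-edge Lipschitz bounds via Lemma~\ref{lem:BBS} and Lemma~\ref{lem:Hyp(Z)distance}, and the same two lemmas at equal levels for the coarse-inverse property. Your estimate $\|u-a\|\leq 2\cdot 2^{-n}$, obtained by passing through a point of $Q\cap Z$, is in fact more careful than the paper's claim that $u\in B_{2^{-n}}(a_{n,u})$, which can fail since $u$ need not lie in $Z$; the difference only affects constants.
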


\begin{proof}
Let $\{A_n\}_{n\geq 0}$ be an increasing sequence of maximal $2^{-n}$-separated subsets of $Z$ used to construct $V_n = \{n\}\times A_n$ in the definition of $\Hyp_{\rm BBS}(Z) = \cup_{n\geq 0} V_n$.

First we will define a map $f: \Hyp(Z) \to \Hyp_{\rm BBS}(Z)$. We leave the map undefined on the basepoint $*$ (the value of $f$ at a single point in a bounded degree graph is irrelevant for a quasi-isometry). Every other point in $\Hyp(Z)$ is of the form $\{n\}\times u$ where $n\geq 0$ and $u \in Q^{(0)}$ for some $Q \in \D_n(Z)$. Let $a_{n,u}$ be any point in $A_n$ for which $u \in B_{2^{-n}}(a_{n,u})$, which exists by maximality of $A_n$. Define
$$f(\{n\}\times u) := (n,a_{n,u}) \in \Hyp_{\rm BBS}(Z).$$
Let us prove that $f$ is Lipschitz. Clearly, every edge in $\Hyp(Z)$ (excluding the ones connected to $*$) is of the form $(\{n\}\times u) \to (\{n+\eps\}\times u')$ for some $n\geq 0$, $\eps \in \{0,1\}$ and $u,u' \in [0,1]^m$ with $\|u-u'\|\leq 2^{-n}$. Then by construction and the triangle inequality, $\|a_{n,u}-a_{n+\eps,u'}\| \leq 3\cdot 2^{-n}$. Hence, by Lemma~\ref{lem:BBS},
$$d_{\Hyp_{\rm BBS}(Z)}(f(\{n\}\times u),f(\{n+\eps\}\times u')) = d_{\Hyp_{\rm BBS}(Z)}((n,a_{n,u}),(n+\eps,a_{n+\eps,u'})) \leq C'$$
for some universal constant $C'<\infty$. This proves that $f$ is Lipschitz.

Now we define a map $g: \Hyp_{\rm BBS}(Z) \to \Hyp(Z)$. Every point in $\Hyp_{\rm BBS}(Z)$ is of the form $(n,a)$ where $n\geq 0$ and $a \in A_n$. Then $a \in Q$ for some $Q \in \D_n(Z)$. Let $u_{n,a} \in Q^{(0)}$ be any point. Define
$$g(n,a) := (\{n\}\times u_{n,a}) \in \Hyp(Z).$$
Let us prove that $g$ is Lipschitz. Clearly, every edge in $\Hyp_{\rm BBS}(Z)$ is of the form $(n,a) \sim (n+\eps,a')$ for some $n\geq 0$, $\eps \in \{0,1\}$ and $a,a' \in Z$ with $\|a-a'\|\leq 2^{-n+2}$. Then by construction and the triangle inequality, $\|u_{n,a}-u_{n+\eps,a'}\| \leq 3\cdot 2^{-n+1}$. Hence, by Lemma~\ref{lem:Hyp(Z)distance},
$$d_{\Hyp(Z)}(g(n,a),g(n+\eps,a')) = d_{\Hyp(Z)}(\{n\}\times u_{n,a},\{n+\eps\}\times u_{n+\eps, a'}) \leq C'$$
for some universal constant $C'<\infty$. This proves that $g$ is Lipschitz.

Finally, we will see that $f$ and $g$ are coarsely inverse. Let $(n,a) \in \Hyp_{\rm BBS}(Z)$ where $n\geq 0$ and $a \in A_n$. It is clear from the construction and triangle inequality that $\|a_{n,u_{n,a}} - a\| \leq 2^{-n+1}$, and therefore Lemma~\ref{lem:BBS} implies
$$d_{\Hyp_{\rm BBS}(Z)}(f(g(n,a)),(n,a)) = d_{\Hyp_{\rm BBS}(Z)}((n,a_{n,u_{n,a}}),(n,a)) \leq C'$$
for some universal constant $C'<\infty$. Hence, $f\circ g$ has bounded uniform distance to the identity. The same argument but using Lemma~\ref{lem:Hyp(Z)distance} in place of Lemma~\ref{lem:BBS} reveals that $g\circ f$ has bounded uniform distance to the identity.
\end{proof}

\begin{corollary} \label{cor:BBS}
Let $m\in\N$ and $Z \sbs [0,1]^m$ be nonempty and compact with $\diam(Z)<1$. Then $\Hyp(Z)$ is visual and hyperbolic, and $\partial\Hyp(Z)$ is power quasisymmetrically equivalent to $Z$.
\end{corollary}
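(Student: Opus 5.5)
The plan is to transfer the three properties from $\Hyp_{\rm BBS}(Z)$ to $\Hyp(Z)$ through Proposition~\ref{prop:EuclideanqiBBS}. The case $\diam(Z)<1$ is assumed there, so we may use it directly. By Proposition~\ref{prop:BBS}, $\Hyp_{\rm BBS}(Z)$ is proper, visual and hyperbolic, and its boundary is power quasisymmetric to $Z$. The hypotheses of Proposition~\ref{prop:BBS} hold because $Z$ is compact and, as a subset of $\mathbb{R}^m$, doubling.

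\textbf{Hyperbolicity.} Both $\Hyp(Z)$ and $\Hyp_{\rm BBS}(Z)$ are geodesic once edges are realized as unit intervals, as in the Remark preceding Proposition~\ref{prop:BBS}. Hyperbolicity is a quasi-isometry invariant among geodesic spaces \cite[Corollary~1.3.5]{Buyalo}. So Proposition~\ref{prop:EuclideanqiBBS} gives that $\Hyp(Z)$ is hyperbolic.

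\textbf{Visuality.} As recalled at the start of the Appendix, visuality is a quasi-isometric invariant of geodesic hyperbolic spaces, by stability of quasi-geodesics \cite[Theorem~1.3.2]{Buyalo}. Concretely, take the quasi-isometry $g\colon \Hyp_{\rm BBS}(Z)\to\Hyp(Z)$. The image of a $K'$-rough geodesic ray from the basepoint is a quasi-geodesic ray. In the hyperbolic space $\Hyp(Z)$ it lies uniformly close to a geodesic ray from $g(o)$, and hence close to a rough geodesic ray from $*$. Since $g$ is coarsely surjective, every point of $\Hyp(Z)$ is within bounded distance of such a ray. Adjusting the constant then makes every point lie on a rough geodesic ray from $*$. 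Here one must check that ``within bounded distance of a ray'' upgrades to ``on a $K''$-rough ray''. The standard trick is to prepend a short segment to the ray, at the cost of enlarging the roughness constant.

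\textbf{Boundary.} Apply Theorem~\ref{thm:BS}, whose ``only if'' direction needs both spaces to be visual and hyperbolic, which we now have. It shows that $\partial\Hyp(Z)$ is power quasisymmetrically equivalent to $\partial\Hyp_{\rm BBS}(Z)$, which is power quasisymmetric to $Z$. Power quasisymmetries compose, with the exponent and constant multiplying, and invert to power quasisymmetries, so the statement follows.

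The step I expect to be the main obstacle is visuality. It is the only place where more is needed than a formal citation, namely the rough-ray adjustment above. Everything else is bookkeeping of already-cited invariances.
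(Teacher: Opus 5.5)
Your proposal is correct and takes essentially the same route as the paper. You transfer hyperbolicity and visuality from $\Hyp_{\rm BBS}(Z)$ through the quasi-isometry of Proposition~\ref{prop:EuclideanqiBBS} using the quasi-isometric invariance of these properties for geodesic spaces, and then combine Theorem~\ref{thm:BS} with Proposition~\ref{prop:BBS} and compose power quasisymmetries. The paper only cites the invariance of visuality where you sketch it; your sketch works, and the cleanest final adjustment is to replace the single point $\gamma(t)$ of the ray by $x$ rather than prepend a segment, which enlarges the roughness constant by $d(x,\gamma(t))$.
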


\begin{proof}
That $\Hyp(Z)$ is visual and hyperbolic follows from the following facts: $\Hyp(Z)$ is quasi-isometric to $\Hyp_{\rm BBS}(Z)$ (Proposition~\ref{prop:EuclideanqiBBS}), the filling $\Hyp_{\rm BBS}(Z)$ is visual and hyperbolic (Proposition~\ref{prop:BBS}), the property of being a visual hyperbolic space is preserved under quasi-isometric equivalence for geodesic metric spaces (follows from stability of geodesics \cite[Theorem~1.3.2]{Buyalo} and \cite[Corollary~1.3.5]{Buyalo}). That $\partial\Hyp(Z)$ is power quasisymmetrically equivalent to $Z$ follows from the following facts: $\Hyp(Z)$ and $\Hyp_{\rm BBS}(Z)$ are quasi-isometrically equivalent (Proposition~\ref{prop:EuclideanqiBBS}), quasi-isometrically equivalent visual hyperbolic spaces have power quasisymmetrically equivalent boundaries (Theorem~\ref{thm:BS}), and $\partial\Hyp_{\rm BBS}(Z)$ is power quasisymmetrically equivalent to $Z$ (Proposition~\ref{prop:BBS}).
\end{proof}

\begin{theorem} \label{thm:qifillingA}
Let $\Gamma$ be a finitely generated hyperbolic group. Let $m\in\N$ and $Z \sbs [0,1]^m$ such that $\partial\Gamma$ is power quasisymmetrically equivalent to $Z$. Then $\Gamma$ is quasi-isometric to the hyperbolic filling $\Hyp(Z)$ of Definition~\ref{def:subEuclidean}.
\end{theorem}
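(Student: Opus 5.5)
The plan is to deduce the statement from Theorem~\ref{thm:BS}, using Corollary~\ref{cor:BBS} to control $\Hyp(Z)$. Corollary~\ref{cor:BBS} assumes $\diam(Z)<1$, so I first normalize $Z$. Since $\partial\Gamma$ is compact and $Z$ is power quasisymmetric to it, $Z$ is compact. I would replace $Z$ by $Z' := \lambda Z + c \sbs [0,1]^m$ with $\lambda = 1/4$ (a translate of a rescaling, a biLipschitz image of $Z$). Then $\diam(Z')\le 1/4<1$, and $Z'$ is still power quasisymmetrically equivalent to $\partial\Gamma$.

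I then need $\Hyp(Z)$ and $\Hyp(Z')$ to be quasi-isometric. I would avoid comparing these two dyadic fillings by hand, since the rescaling and translation do not respect the dyadic grid. Instead, Proposition~\ref{prop:EuclideanqiBBS} and Proposition~\ref{prop:BBS} give the conclusion for $Z'$ directly. For $Z$ itself, I would handle $\diam(Z)\ge 1$ by observing that the proofs of Lemma~\ref{lem:BBS} and Proposition~\ref{prop:EuclideanqiBBS} only use $|A_0|=1$ for connectivity. Alternatively, one can add a root vertex joined to all of $V_0$; this is a bounded modification, since $|A_0|\le C$ by doubling and $\diam Z\le 1$ in $\ell^\infty$. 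With that, the comparison of $\Hyp(Z)$ with $\Hyp_{\rm BBS}(Z)$ goes through unchanged. So the main chain is as follows: $\Hyp(Z)$ is visual and hyperbolic with $\partial\Hyp(Z)$ power quasisymmetric to $Z$, by Corollary~\ref{cor:BBS} or its mild extension.

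Next, a Cayley graph of $\Gamma$ is a visual hyperbolic metric space (\cite[Lemma~3.1]{Bestvina}). Composing power quasisymmetries, $\partial\Gamma \simeq Z \simeq \partial\Hyp(Z)$, and compositions of power quasisymmetric equivalences are again power quasisymmetric. Theorem~\ref{thm:BS} then says $\Gamma$ and $\Hyp(Z)$ are quasi-isometric. One small point is that $\Gamma$ here is the vertex set with the word metric rather than the geodesic Cayley graph, but the inclusion is a quasi-isometry.

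The main obstacle is the diameter normalization: Corollary~\ref{cor:BBS} requires $\diam(Z)<1$, while the theorem allows arbitrary $Z\sbs[0,1]^m$. I would first try to avoid this by showing that $\Hyp(Z)$ and $\Hyp(Z/2)$ are quasi-isometric via $\{n\}\times u\mapsto\{n+1\}\times u/2$. Under this map dyadic cubes of level $n$ correspond to level $n+1$ cubes meeting $Z/2$, and the image is coarsely dense, which Lemma~\ref{lem:Hyp(Z)distance} makes routine. Here $\Hyp(Z/2)$ is viewed inside $\Dyd^m$ by regarding $Z/2\sbs[0,1/2]^m$. The diameter of $Z/2$ is at most $1/2<1$ in the $\ell^\infty$ norm, so Corollary~\ref{cor:BBS} applies to $Z/2$, and $Z/2$ is biLipschitz to $Z$.
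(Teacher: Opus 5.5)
Your proposal is correct and follows the paper's own proof. You normalize the diameter, then use Corollary~\ref{cor:BBS} to get that $\Hyp(Z)$ is visual and hyperbolic with boundary power quasisymmetric to $Z$. You then use visuality of $\Gamma$, compose the power quasisymmetries, and apply Theorem~\ref{thm:BS}.

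Where you are more careful than the paper is the normalization, which the paper handles with the phrase ``by rescaling.'' Your comparison of $\Hyp(Z)$ with $\Hyp(Z/2)$ via $\{n\}\times u\mapsto\{n+1\}\times u/2$ is what actually justifies that step. That map is Lipschitz with coarsely dense image, and Lemma~\ref{lem:Hyp(Z)distance} supplies a Lipschitz coarse inverse.

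Your other two options are incomplete on their own and can be dropped. Applying the results to $Z'=\frac14 Z+c$ proves the claim for $\Hyp(Z')$, not $\Hyp(Z)$. The ``mild extension'' to $\diam Z\geq 1$ would also require Proposition~\ref{prop:BBS} itself (hyperbolicity, visuality, and the boundary identification) in that regime, not just the comparison lemma.
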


\begin{proof}
By rescaling, we may assume that $\diam(Z)<1$. The spaces $\partial\Hyp(Z)$ and $Z$ are power quasisymmetrically equivalent by Corollary~\ref{cor:BBS}. Hence, by assumption, $\partial\Hyp(Z)$ and $\partial\Gamma$ are power quasisymmetrically equivalent. The space $\Gamma$ is hyperbolic by assumption and is visual by \cite[Lemma~3.1]{Bestvina}, and the space $\Hyp(Z)$ is visual and hyperbolic by Corollary~\ref{cor:BBS}. Therefore, by Theorem~\ref{thm:BS}, $\Gamma$ and $\Hyp(Z)$ are quasi-isometric.
\end{proof}

\end{document}